\newtheorem{thm}{Theorem}[section]
\newtheorem{cor}[thm]{Corollary}
\newtheorem{corollary}[thm]{Corollary} 
 \newtheorem{lemma}[thm]{Lemma}
\newtheorem{prop}[thm]{Proposition}
\newenvironment{customthm}[1]
  {\innercustomthm}
  {\endinnercustomthm}
\theoremstyle{definition}
\newtheorem{rem}[thm]{Remark}
\def\colr{\color{red}}
\def\colu{\color{blue}}
\def\colb{\color{black}}
\def \no#1#2#3 {{\bf #1} (#3), #2.}
\def \eds#1#2#3 {#1, #2, #3.}
\def\R{{\mathbb R}}
\def\d{{\rm d}}
\def\T{{\mathbb T}}
\def\N{{\mathbb N}}
\def\LL{{\mathcal{L}}}
\def\:{{\colon}}
\def\be#1{\begin{equation}\label{#1}}
\def\ee{\end{equation}}
\def\<{\langle}
\def\>{\rangle}
\def\coloneqq{:=}
\renewcommand{\hat}[1]{\widehat{#1}}
\renewcommand{\bar}[1]{\overline{#1}}
\renewcommand{\tilde}[1]{\widetilde{#1}}
\newcommand{\na}{\nabla}
\newcommand{\bb}{b}
\newcommand{\ou}{{\overline{u}}}
\newcommand{\ow}{{\overline{\omega}}}
\newcommand{\lec}{\lesssim}
\newcommand{\bs}{\begin{split}}
\newcommand{\essss}{\end{split}}
\renewcommand{\div}{\operatorname{div}}
\newcommand{\eqnb}{\begin{equation}}
\newcommand{\eqne}{\end{equation}}
\renewcommand{\ee}{\mathrm{e}}
\newcommand{\p}{\partial}
\newcommand{\re}{\mathrm{Re}}
\newcommand{\im}{\mathrm{Im}}
\newcommand{\out}{\mathrm{out}}
\let\Re\relax
\DeclareMathOperator{\Re}{Re}
\let\Im\relax
\DeclareMathOperator{\Im}{Im}
\newcommand{\wo}{{\widetilde{\omega}}}
\newcommand{\ho}{{\widehat{\omega}}}
\newcommand{\oo}{{\overline{\omega}}}
\renewcommand{\T}{\mathbb{T}}
\renewcommand{\R}{\mathbb{R}}
\newcommand{\Aa}{\widetilde{\mathsf{A}}}
\newcommand{\Bb}{\widetilde{\mathsf{B}}}
\newcommand{\Dd}{\widetilde{\mathsf{C}}}
\newcommand{\C}{\mathbb{C}}
\newcommand{\Z}{\mathbb{Z}}
\newcommand{\app}{\mathrm{app}}
\newcommand{\gec}{\gtrsim}
\newcommand{\wu}{{\widetilde{u }}}
\newcommand{\wL}{{\widetilde{L }}}
\renewcommand{\d}{\mathrm{d}}
\newcommand{\supp}{\operatorname{supp}}
\numberwithin{equation}{section}
\begin{document}

\title[Instability of vortex columns]{Linear and nonlinear instability of vortex columns}

\author[D. Albritton]{Dallas Albritton} 
\address[D. Albritton]{Department of Mathematics, University of Wisconsin-Madison,   Madison, WI 53706, USA}
\email[]{dalbritton@wisc.edu}

\author[W. S. O\.za\'nski]{Wojciech S. O\.za\'nski} 
\address[W. S. O\.za\'nski]{ Department of Mathematics, Florida State University, Tallahassee, FL 32306, USA  \newline and
Department of Mathematics, Princeton University,  Princeton, NJ 08544, USA }
\email[]{wozanski@fsu.edu}

%WO: Initial title:
%Infinitely many modes of nonlinear instability of inviscid vortex columns

%DA: Alternative titles:
%Linear and nonlinear instability of vortex columns
%High frequency instability of vortex columns
%Ring mode instability of vortex columns (don't love)

\keywords{hydrodynamic stability, Rayleigh equation, vortex columns, swirling flows, Batchelor vortex, 3D Euler equations}
\subjclass{35Q31, 76E07, 76E09 }
%MSC:
% 35Q31  	Euler equations 

%76E07  	Rotation in hydrodynamic stability
%76E09  	Stability and instability of nonparallel flows in hydrodynamic stability
%34B07  	Linear boundary value problems for ordinary differential equations with nonlinear dependence on the spectral parameter

%34B30  	Special ordinary differential equations (Mathieu, Hill, Bessel, etc.)
% 76B03  	Existence, uniqueness, and regularity theory for incompressible inviscid fluids 
%76B47  	Vortex flows for incompressible inviscid fluids

\begin{abstract}
We consider vortex column solutions $v = V(r) e_\theta + W(r) e_z$ to the $3$D Euler equations. 
We give a mathematically rigorous construction of the countable family of unstable modes discovered by Liebovich and Stewartson (\emph{J. Fluid Mech.} 126, 1983) via formal asymptotic analysis. The unstable modes exhibit $O(1)$ growth rates and concentrate on a ring $r= r_0$ asymptotically as the azimuthal and axial wavenumbers $n, \alpha \to \infty$ with a fixed ratio. We construct these so-called ring modes with an inner-outer gluing procedure. 
Finally, we prove that each linear instability implies nonlinear instability for vortex columns. In particular, our analysis yields nonlinear instability for the Batchelor trailing line vortex $V(r) \coloneqq \frac{q}{r} (1-\ee^{-r^2})$ and $W(r) \coloneqq \ee^{-r^2}$ when $0<q <\log 2 / \sqrt{1-\log 2} \approx 1.251$.
\end{abstract}

\maketitle

\date\today

%\medskip

\setcounter{tocdepth}{2}
\tableofcontents

%\noindent\thanks{\em Mathematics Subject Classification\/}:

%\noindent\thanks{\em Keywords:\/}
%

\section{Introduction}\label{sec_intro}
%\title{Some notes}%
%\author{}
%\date{\vspace{-5ex}}
%\maketitle
%\blfootnote{}

%A classical topic in hydrodynamic stability is the stability analysis of shear flows and vortices. Which of these solutions are stable, and in what sense? This question was investigated by Rayleigh in~\cite{rayleigh1879stability}, who discovered \emph{necessary} conditions for the existence of unstable modes: Shear flows without inflection points and vortices with radially decreasing vorticity have no unstable modes. No simple necessary and \emph{sufficient} condition for the instability of shear flows and vortices is known. One classical goal of hydrodynamic stability has been to improve the existing criteria and, simultaneously, exhibit unstable flows which demonstrate ``sharpness" of the known criteria.

Vortex columns are steady solutions to the $3$D Euler equations
\eqnb\label{3d_euler}
\begin{split}
\p_t v + (v\cdot \nabla ) v + \nabla p &=0, \\
\div \, v &=0
\end{split}
\eqne
having the form
\eqnb\label{vortex_col}
v = V(r) e_\theta  + W(r) e_z \, ,
\eqne
where $e_r$, $e_\theta$, $e_z$ denote the standard cylindrical basis vectors, $V(r)$ is the angular profile, and $W(r)$ is the axial profile. %Many swirling flows are accompanied by non-zero axial flow, and therefore
Notably, the vortex columns we consider may have non-zero axial velocity. As a prototypical example, we keep in mind the \emph{Batchelor vortex}
\eqnb\label{trailing_vortex}
V(r) \coloneqq \frac{q}{r} (1-\ee^{-r^2}),\qquad W(r) \coloneqq \ee^{-r^2} \, ,
\eqne
where $q \geq 0$ is a swirl parameter. This vortex (more specifically, its viscous analogue in which the core diffuses downstream) was proposed by Batchelor~\cite{Batchelor64} as a downstream model for the trailing vortex behind a wing. It is sometimes known as the \emph{trailing line vortex}, \emph{trailing vortex} or \emph{$q$-vortex}.

Vortices are among the simplest possible flows, and it is a classical goal of hydrodynamic stability theory to understand under what conditions these flows are stable. For vortices without axial flow, the most well known conditions are due to Rayleigh~\cite{rayleigh1879stability,rayleigh_1916} (see also Synge~\cite{synge1933stability}); with axial flow, analogous conditions were established by Howard and Gupta~\cite{hg}. The stability problem for vortex columns is further motivated by \emph{vortex breakdown}, a nonlinear phenomenon historically characterized by the development of a stagnation point on the axis and a region of reversed flow as the vortex transitions to a more complex unsteady flow downstream, see~\cite{Leibovich1978,Leibovich1984,LuccaNegro2001} and~\cite[p. 74]{van1982album}. It is natural to expect that instabilities of the underlying vortex play a role in triggering its breakdown.

%gallery of fluid motion p. 74  high frequency instability which is non-axisymmetric and genuinely three dimensional.

Following numerical work~\cite{Lessen_Singh_Paillet_1974,Duck1980}, Liebovich and Stewartson~\cite{ls} discovered a countable family of unstable modes via asymptotic analysis as the axial and (negative) azimuthal wavenumbers $\alpha, n \to \infty$ in tandem with fixed ratio $\beta := \alpha/n$. The eigenfunctions are sometimes called ``ring modes'' because they asymptotically concentrate in a neighbourhood of a ring $r=r_0$. The countable family is indexed by a parameter $m \in \N$, with $m=1$ corresponding to a most unstable ``ground state" and $m \geq 2$ to less unstable ``excited states", all of whose growth rates are asymptotically $O(1)$:
\begin{equation}
	\label{eq:twistedeval}
    \omega = i\lambda = -n C_1+i C_2 + ( 1-i )n^{-1/2} (2m-1) C_3 + o(n^{-1/2}) \, ,
\end{equation}
where $C_1,C_2,C_3>0$ (see \eqref{choice_of_tilde_om_m_copy} for precise values). The ring modes are numerically reported to be the most unstable modes~\cite{ls}, at least in certain regimes. They break axisymmetry but retain helical symmetry. The asymptotic analysis of~\cite{ls} and its viscous counterpart~\cite{Stewartson1982} were followed by many works on the stability of vortices, especially the Batchelor vortex, and its relationship to vortex breakdown, see, for example,~\cite{StewartsonBrownNearNeutral1985,Duck1986,ls_87,cs,Khorrami_1991,Mayer_Powell_1992,DuckKhorrami,Abid1998} and, more recently,~\cite{OlendraruSellier,GALLAIRE2003,FABRE2004,HEATON2007,HEATON20072,AbidkNonlinearModeSelection,MAO2011,Billant_Gallaire_2013}. \\

%The inviscid vortex is known to be stable for ... and unstable for ...

%For quasilinear equations like~\eqref{3d_euler}, this is not a trivial task. 

In this paper, we revisit the stability of vortex columns from the perspective of rigorous partial differential equations (PDEs). Our first contribution is to give a rigorous mathematical construction of the ring modes discovered by Liebovich and Stewartson. This turns out to be a challenging perturbation problem, which we tackle using an inner-outer gluing method; see Theorem~\ref{thm_main}. Our second contribution is to construct Euler solutions which exhibit this instability at the nonlinear level; see Theorem~\ref{thm:nonlinearinstab}. Previous PDE works focused on the (in)stability of two-dimensional vortices and columnar vortices without axial flow, see, for example,~\cite{gs_spectral,gs_linear,bedrossian2019vortex,vishik_2,lin_siam,bardosguostrauss,friedlandervishik}. In contrast, this paper gives the first PDE result focusing on the important setting of vortices with axial flow, which contains genuinely new phenomena. We focus on the ring modes for their historical importance and because they are, reportedly, the most unstable. We anticipate that variants of our methods will work in many settings containing asymptotically localized instabilities. \\

%\dacomment{I want to say a little more, but I'm not sure what to say}

Finally, while the ring modes are reportedly the most unstable, they are well localized away from the symmetry axis. It was proposed, for example, in~\cite{HEATONPeake2007,AbidkNonlinearModeSelection} that algebraic growth on the symmetry axis associated to the inviscid continuous spectrum plays a more important role in triggering vortex breakdown. It will be interesting to investigate these claims from the PDE perspective.

\subsection{The Rayleigh equation for vortex columns}

We begin by deriving the analogue of the Rayleigh equation for vortex columns, as in, e.g.,~\cite{hg}.\footnote{For two-dimensional shear flows $u = (U(y),0)$ with $x$-wavenumber $m \in \R$, eigenvalue $\lambda = -imc$, and $\hat{\psi}_m(y) = \phi(y)$, the Rayleigh equation (see, for example, Drazin and Reid~\cite{DrazinReid}) is
\begin{equation}
    \label{eq:shearRayleigh}
    (U(y) - c) (\p_y^2 - m^2) \phi - U''(y) \phi = 0 \, .
\end{equation} }
 It will be convenient to define the \emph{angular velocity} $\Omega$ and \emph{circulation} $\Gamma$:
\begin{equation}
    \Omega(r) \coloneqq V(r)/r \, , \quad \Gamma(r) \coloneqq r V(r) \, .
\end{equation}
We consider the eigenvalue problem for the linearized Euler equations around the vortex column $V(r) e_r + W(r) e_z$ in cylindrical variables, 
\begin{equation}
    \label{eq:linearizedevaleqn}
   \begin{split}
\lambda u_r   +\left( \frac{V}r \p_\theta u_r + W \p_z u_r \right) -2 \frac{Vu_\theta }r + \p_r p&=0,\\
\lambda  u_\theta +\left(  u_r V'+\frac{V}r \p_\theta u_\theta + W \p_z u_\theta \right) + \frac{Vu_r}r + \frac{1}r \p_\theta p &=0,\\
\lambda u_z +\left(  u_r W' + \frac{V}r \p_\theta u_z + W \p_z u_z \right) + \p_z p&=0.
   \end{split}
\end{equation}
These can be block diagonalized by applying the Fourier transform in the $z$ variable and Fourier series in the $\theta$ variable. It will therefore be enough to consider solutions 
\begin{equation}\label{perturbation_form}
    u = (u_r (r) e_r+ u_\theta (r) e_\theta+ u_z (r) e_z) \ee^{i(\alpha z - n \theta)}
\end{equation}
with eigenvalue $\lambda = -i\omega$, where we abuse notation by reusing $u_r$, etc. Namely, we seek solutions of the form $u\ee^{-i\omega t}$ to the linearized Euler equations, which are equivalent to the system \eqref{eq:linearizedevaleqn} for $u$ and $\lambda =-i\omega \in \C$. \emph{Notice the notational convention of $-n$ in \eqref{perturbation_form} which is meant to conform with~\cite{ls}.}  We will refer to \emph{unstable modes} as nontrivial solutions $u$ of the form \eqref{perturbation_form} with $\re \, \lambda >0$, i.e. 
\[\im \,\omega >0 \, .\]
A calculation shows that~\eqref{eq:linearizedevaleqn} can be reduced to an equation involving only the radial velocity~$u_r(r)$:
\eqnb\label{eq_for_u}
\frac{\d }{\d r } \left( \frac{r}{1+\beta^2 r^2} \frac{\d }{\d r} (r u_r(r)) \right) - n^2 \left( 1 + \frac{a(r)}{n \,\gamma (r) } + \frac{b(r)}{\gamma(r)^2} \right) u_r(r) =0,
\eqne
where\footnote{In the definition of $a$ in~\cite[(4.3b)]{ls}, there is a typo ($qn$ instead of $q$), which is corrected in~\cite[(2.3)]{ls_87}.}
\eqnb\label{def_abd}
\begin{split}
\beta &\coloneqq \frac{\alpha }{n},\\
a(r) &\coloneqq   r \frac{\d }{\d r} \left( \frac{(\beta r^2 +q )W'(r)}{r(1+\beta^2r^2 ) } \right) ,\\
b(r) &\coloneqq \frac{\beta r^2 (1-\beta q) \Phi (r) }{q(1+\beta^2 r^2)},\\
\Phi (r) &\coloneqq \frac{1}{r^3} \frac{\d }{\d r} (rV(r))^2,\\
q&\coloneqq - \frac{\frac{\d}{\d r} (r V(r) )}{W'(r)},
\end{split}
\eqne
and 
\begin{align}
\gamma (r) &\coloneqq  n \Lambda (r) - \omega \, , \label{def_of_gamma}\\ 
\Lambda (r) &\coloneqq  \beta W(r) - \Omega (r) \, , \label{def_of_Lambda}
\end{align}
see \cite{ls}. The angular and axial velocities $u_\theta$ and $u_z$ can be recovered from $u_r$ by taking $r \p_z$ of the second equation of \eqref{eq:linearizedevaleqn}  and subtracting $\p_\theta$ of the third equation in \eqref{eq:linearizedevaleqn}. This and the divergence-free condition give 
\begin{equation}\label{utheta_uz_from_ur}
\begin{pmatrix}
    in\omega + in^2 \frac{V}r-i\alpha n W & i\alpha \omega r +in \alpha V + \alpha^3 W r \\i\alpha  &-\frac{in}r
\end{pmatrix}\begin{pmatrix}
    u_z  \\u_\theta  
\end{pmatrix}=\begin{pmatrix}
    u_r \left( \alpha (rV)' +n W' \right)  \\-u_r' - \frac{u_r}r 
\end{pmatrix} .
\end{equation}
We note that the determinant of the matrix on the left-hand side of \eqref{utheta_uz_from_ur},
\[
n^2 \left( \omega \left( \frac1r + \beta^2 r \right) +  \left( \frac{n}{r^2} + n\beta^2 \right)V - \left( \frac{\alpha}r + \alpha \beta^2 r \right)W  \right)
\]
does not vanish for any $r>0$ (as $\omega$ is the only term with nonzero imaginary part).

The above Rayleigh equation \eqref{eq_for_u} is more complicated than its analogue for shear flows and vortices (see \cite{ABCD,ko_jmfm,lin_siam,linnonlinear,vishik_1,vishik_2}), but it is still possible to extract some stability conditions from it. Notably, there is a necessary and sufficient condition for stability with respect to a restricted class of perturbations. Namely, Howard and Gupta~\cite{hg} demonstrated that a columnar vortex is spectrally stable to \emph{axisymmetric perturbations}, that is, $n = 0$, precisely when its Rayleigh function $\Phi$ is non-negative. 
A second interesting special case is when $W = 0$, that is, when we consider 3D perturbations of a 2D vortex. When the 2D vortex satisfies the Rayleigh condition~\cite{rayleigh_1916}, namely, that its vorticity profile is decreasing, Gallay and Smets demonstrated spectral stability in the enstrophy class~\cite{gs_spectral} and, in the energy class, that the growth rate of the linear semigroup is, at most, subexponential~\cite{gs_linear}. We refer to the review article~\cite{gallay_stability} for further discussion; in Section~\ref{sec:instabcrit} we also discuss additional stability criteria for vortex columns~\eqref{vortex_col}, including an analogue~\cite{barston} of Howard's semicircle theorem~\cite{howard}.

%For vortex columns, 

%(The Miles-Howard criterion is sharp in that the threshold value of 1/4.

%New section -- the Rayleigh equation. I should go through all the old lit and learn it...

%  If $W=0$ then \eqref{vortex_col} reduces to a $2$D vortex,
  
  %whose $3$D perturbations have recently been studied by Gallay ... (and one should not confuse ``columnar vortices'' with ``vortex columns''), who showed stability.

\subsection{Linear instability}

%Our first result, Theorem~\ref{thm_main} below, states that, under certain sufficient condition on the ratio $\beta\coloneqq \alpha /n$ (see \eqref{choice_r0_beta}; which is valid for \eqref{trailing_vortex}, for example), there exist infinitely many choices of 
%as $n\to \infty$,  which give rise to a unique solution $\eta (\cdot ,t )\in H^1 (\R^2 \times \T_{\beta^{-1}})$, as in  \eqref{perturb_form}, of the linearization of the $3$D Euler equations, see Figure~\ref{fig_semicircle}. 

%Here $\T_{\beta^{-1}} = [0,\beta^{-1}) $ denotes the $\beta^{-1}$-periodic torus, the point being that, for $\beta$ fixed the perturbation $\nu$ is $\beta^{-1}$-periodic in $x_3$ for all $n$.  Since $\im \, \omega = C_2 + O(n^{-1/2})>0$ for each $m\geq 1$ (given $n=n(m)$ is taken sufficiently large), this result demonstrates existence of infinitely many modes of \emph{linear instability} of vortex columns \eqref{vortex_col}. Moreover, Theorem~\ref{thm_main} includes some more detailed information about the nature of the linear instability, most notably that it admits a multiscale behaviour, centered at some $r_0$, and it develops a general functional framework for capturing such intabilities.

We consider any vortex column~\eqref{vortex_col} satisfying the following.
\begin{customthm}{A}\label{ass}
The velocity components $V,W$ are such that $\Omega, W \in C^\infty([0,+\infty))$ and satisfy $\Omega^{(2k+1)}(0)=0$, $W^{(2k+1)}(0)=0$ and $\Omega^{(k)}(r), W^{(k)}(r) \to 0$ as $r\to \infty$, for all $k\geq 0$. Moreover, $0 < \inf q < \sup q < +\infty$ and there exist $r_0,\beta>0$, such that 
\begin{equation}
\label{choice_r0_beta}
    b'(r_0) = \Lambda'(r_0) =0,\qquad b_0 \coloneqq b(r_0) >0 , \qquad \Lambda'' (r_0) >0 \, .
\end{equation}
Finally, we assume that there is no $r_1  \in [0,+\infty)$, $r_1\ne r_0$, such that $\Lambda(r_1) = \Lambda(r_0)$. %\footnote{It is likely that this assumption can be relaxed.} %(Also $a, b, d \to 0$.) $\Lambda(r_0)$ unique global minimum.]
\end{customthm}

We note that one particular example satisfying \textbf{Assumption~\ref{ass}} is the Batchelor vortex \eqref{trailing_vortex}
\[
V(r) \coloneqq \frac{q}{r} (1-\ee^{-r^2}),\qquad W(r) \coloneqq \ee^{-r^2},
\]
in which case $q$, defined in~\eqref{def_abd}, is the constant $q = \max_r \Omega(r)$ representing the maximal pitch angle of the helical particle trajectories of the basic flow.  We show in Appendix~\ref{sec_choice_r0_beta} that \eqnb\label{beta_range}
\frac{q}2 < \beta < \frac1q 
\eqne
is a necessary condition for Assumption~\ref{ass}, and we show that there exist a unique choice of $r_0,\beta>0$ satisfying~\eqref{choice_r0_beta} if
\eqnb\label{q_restr}
q<\frac{\log 2 }{\sqrt{1-\log 2}}\approx 1.251.
\eqne
Note that \eqref{beta_range} implies that $q<2^{1/2}$, but, as shown in Appendix~\ref{sec_choice_r0_beta}, for $q$ close to $2^{1/2}$ there are no  $r_0,\beta$ satisfying \eqref{choice_r0_beta}. We also note that the two limiting cases $\beta \to (1/q)^-$ and $\beta \to (q/2)^+$, were studied by Capell and Stewartson \cite{cs} and  Leibovich and Stewartson \cite{ls_87}, respectively.\\

%This paper is inspired by an earlier work of Leibovich and Stewartson \cite{ls}, who used asymptotic analysis to suggest that, for large $n$, there exists $\omega\in \C$ and a nontrivial solution $\phi$ to \eqref{eq_phi} that is unstable, namely that $\im\, \omega >0$.

In order to describe the observation of Leibovich and Stewartson~\cite{ls}, we rewrite the equation in terms of the variable
\begin{equation}\label{phi_vs_ur}
\phi (r) \coloneqq \left( \frac{r^3}{1+\beta^2r^2 }\right)^{\frac12} u_r(r) \, .
\end{equation}
Then~\eqref{eq_for_u} becomes
\begin{equation}\label{eq_phi}
 \phi'' - k \phi = 0  ,
\end{equation}
where the potential $k(r)$ is defined by
\eqnb\label{def_of_k}
k\coloneqq  p n^2 \left( 1 + \frac{a}{n \gamma} + \frac{b}{\gamma^2} + \frac{d}{n^2} \right) ,
\eqne
where 
\eqnb\label{def_abd1}
\begin{split}
d(r) &\coloneqq -\frac{1+10\beta^2 r^2 - 3\beta^4 r^4 }{4(1+\beta^2 r^2 )^3},\\
p(r) &\coloneqq \frac{1+\beta^2 r^2}{r^2}.
\end{split}
\eqne
We sometimes omit the dependence on $r$ from the notation, although we shall keep in mind that all terms in \eqref{eq_phi}--\eqref{def_of_k} depend on $r>0$, except for $\beta >0$, $n\in \N$, and $\omega\in \C$, which are constants.

We now summarize the key points of the asymptotics analysis of Leibovich and Stewartson~\cite{ls}. They sought a sequence of unstable modes with growth rates $\Im \omega = O(1)$ as $n \to +\infty$ and the wavenumber ratio $\beta = \alpha/n$ is constant. For unstable modes to exist, it is necessary that the real part of the potential $k$, defined in~\eqref{def_of_k}, be somewhere negative. It is not obvious that this is possible, due to the presence of the ``$1$'' in the bracket in~\eqref{def_of_k}. It is plausible that the terms $a/(n\gamma)$ and $d/n^2$, containing $1/n$, will be negligible compared to $1 + b/\gamma^2$, and we temporarily ignore them. (Recall~\eqref{def_of_gamma} that $\gamma = n\Lambda - \omega$.) For $b/\gamma^2$  not to be negligible, it is necessary that $\omega = n\Lambda(r_0) + o(n)$ as $n\to \infty$, for a value $r_0$. Subsequently, we suppose that
\begin{equation}
    \label{eq:firstomegaexp}
    \omega = \underbrace{n\Lambda(r_0) + ib(r_0)^{1/2}}_{=: \omega_{\rm app}} + o(1)\qquad  \text{ as }n\to \infty,
\end{equation} so that $\omega_{\rm app}$ precisely cancels the $1$. In order to maximize the growth rate, it is natural to choose $r_0$ satisfying $b'(r_0) = 0$. The non-trivial behavior is concentrated near $r=r_0$, so we Taylor expand the potential around $r_0$:
\eqnb\label{intro_k_expansion}
k (r) = k_0 + k_2 (r-r_0)^2 + \text{\emph{remainder}} \, ,
\eqne
where the the \emph{remainder} may depend on $r$ and $n$, and the requirement $b'(r_0)=\Lambda'(r_0) = \gamma'(r_0) = 0$ eliminates the $k_1$ term in the sense that the linear part of the expansion arises only from $p,a,d$ (recall~\eqref{def_of_k}), which can be treated as part of the remainder. A direct computation shows that
\eqnb\label{k0_and_k2}
k_0 = -i \mu n^2 \frac{2p_0 }{b_0^{1/2}}\, ,\qquad k_2 = in^3 \frac{p_0 \Lambda''(r_0) }{b_0^{1/2}} \, ,
\eqne
where $p_0\coloneqq p(r_0)$, and $\mu$ contains the leading order terms of the $o(1)$ remainder in~\eqref{eq:firstomegaexp}, see Appendix~\ref{app_exp_of_k} for details. The main observation of~\cite{ls} is that replacing $k$ in the Rayleigh equation \eqref{eq_phi} by its expansion \eqref{intro_k_expansion} gives the \emph{Weber equation}
\begin{equation}
\label{intro_eq_phi_inner}
 \phi''(r) - (k_0 + k_2 (r-r_0)^2 ) \phi(r) =0 \, ,
\end{equation}
which has solutions that decay at both $r\to \pm \infty$ precisely when
\begin{equation}
\label{ks_relation}
k_0 = -  (2m-1)  k_2^{1/2}    
\end{equation}
for some  $m \in \N$, in which case the exact solutions are constant multiples of the  $m$-th \emph{Weber function} 
\eqnb\label{weber_wm}
w_m (r) \coloneqq  c_m  \exp \left( -\frac12 k_2^{1/2} (r-r_0)^2 \right) H_m \left( k_2^{1/4} (r-r_0)\right) ,
\eqne
where  $H_m (x)\coloneqq (-1)^{m-1} \ee^{x^2} (\frac{\d}{\d x})^{m-1}  \ee^{-x^2} $  denotes the $m$-th Hermite polynomial and $c_m > 0$ is a normalization factor ensuring $n^{3/8} \| w_m \|_{L^2} = 1$. Here the branch of $k_2^{1/2}$ is chosen so that $\re \, k_2^{1/2} >0$, i.e.,
\eqnb\label{choice_of_k_2}
k_2^{1/2} \coloneqq   (1+i){n^{3/2}} \left(\frac{p_0 \Lambda'' (r_0) }{2b_0^{1/2}} \right)^{1/2} ,
\eqne
and we define $k_0$ by \eqref{ks_relation}.\footnote{As for $k_2^{1/4}$ appearing in the definition of $w_m$ above, we pick any of the roots of $k_2^{1/2}$. (Recall that each Hermite polynomial is either odd or even, so the two choices of $k_2^{1/4}$ give the same solution \eqref{weber_wm}, up to the sign.)} Since $\mu$ appears in \eqref{k0_and_k2}, this defines infinitely many choices of $\mu$, which we denote by $\mu_m$, and which determine the next order of approximation in~\eqref{eq:firstomegaexp},
\begin{equation}
\label{choice_of_tilde_om_m_copy}
  \omega_m = \underbrace{n\Lambda_0 + i b_0^{1/2}}_{= \omega_{\rm app}} + \underbrace{(1-i ) n^{-1/2} (2m-1)  \left( \frac{b_0^{1/2} \Lambda''(r_0) }{8 p_0} \right)^{1/2}}_{=: \mu_m } + \hat{\omega}_m \, ,
\end{equation}
where $\hat{\omega}_m  = o(n^{-1/2})$ as $n\to \infty$ remains to be determined.

The above choice sets the ``inner length scale'' as $O(n^{-3/4})$, as can be seen from the scaling factor $n^{3/4}$ hiding inside $k_2^{1/4}$ in~\eqref{weber_wm}.  %\colr , as the Weber function \eqref{weber_wm} is not small only for $|k_2^{1/4}(r-r_0)|\lec 1$, i.e. when $|r-r_0|\lec |k_2^{-1/4}|\sim n^{-3/4}$.\colb  
In conclusion, we expect the $m$-th family of unstable modes to look like the $m$-th Weber function on length scale $|r-r_0| \leq O(n^{-3/4})$ and be centered at $r = r_0$. The eigenvalue $\omega$ (which is a part of the definition \eqref{def_of_k} of potential $k$) should satisfy the asymptotics~\eqref{choice_of_tilde_om_m_copy}. In~\cite{ls}, the authors present an impressive (formal) calculation which yields the eigenvalue up to and including the $O(n^{-3/2})$ term of $\omega$ for the ``fundamental" mode $m=1$.

To accompany the asymptotics in~\cite{ls}, Liebovich and Stewartson propose the condition that
\begin{equation}
    \label{eq:lssufficient}
V \frac{\d \Omega}{\d r} \left[ \frac{\d\Omega}{\d r} \frac{\d\Gamma}{\d r} + \left( \frac{\d W}{\d r} \right)^2 \right] < 0 \text{ somewhere in the flow field }
\end{equation}
 as a sufficient condition for instability. %This is simply the condition that $b(r_0) > 0$, see~\cite[(5.8)]{ls}. 
 Seemingly~\eqref{eq:lssufficient} should be supplemented by mild background conditions, like in~\textbf{Assumption~\ref{ass}}. \\

%To validate the asymptotic expansion, one must give a plausible argument that it is possible to solve for the remaining terms. 

In Theorem~\ref{thm_main}, we give a rigorous mathematical construction of the full family ($m \geq 1$) of unstable ring modes via an inner-outer gluing procedure. % for the full family of unstable modes . % for capturing the unstable solutions given by~\eqref{eq_phi}. 

\begin{thm}[Unstable modes] \label{thm_main}
Suppose that $v$ is a vortex column \eqref{vortex_col} satisfying \textbf{Assumption~\ref{ass}}. Let $m \in \N$. Then, for all $n$ sufficiently large depending on $m$, there exists a solution $(\phi_m,\omega_m)$ to the Rayleigh equation~\eqref{eq_phi} satisfying, for every $\delta> 0$, the estimates
\begin{equation}\label{apriori_l2}
  n^{3/8}  \| \phi_m - w_m \|_{L^2} \lesssim n^{-1/2+\delta},
\end{equation}
\begin{equation}
    \omega_m = \omega_{\rm app} + \mu_m + O(n^{-1+\delta}) \, ,
\end{equation}
where the implied constants may depend on $m,\delta$, and $v$. Moreover, the solutions are unique in the following sense. Let $M \in \N$ and $C_M \coloneqq M (b_0^{1/2} \Lambda'' (r_0) / p_0 )^{1/2}$.
%\begin{equation} %\end{equation}
Then, for all sufficiently large $n$ (depending on $M$), any solution $\psi \in H^1_0(\R^+)$ to the Rayleigh equation~\eqref{eq_phi} with $\omega \in B(\omega_{\rm app}, C_M n^{-1/2})$ satisfies that, for some $m=1, \ldots , M$, $\omega = {\omega}_m$ and $\psi$ is a multiple of $\phi_m$. % for some 
\end{thm}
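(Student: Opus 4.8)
The plan is to treat Theorem~\ref{thm_main} as a two--turning--point eigenvalue problem on $\R^+$ and to locate the eigenvalues as the zeros of a ``dispersion function'' assembled by an inner-outer gluing. First note that any $\psi\in H^1_0(\R^+)$ solving \eqref{eq_phi} is automatically the recessive solution at both ends: near $r=0$ the potential behaves like $n^2/r^2$, so the only $L^2$ solution behaves like $r^{n+1/2}$, and near $r=\infty$ one has $k\to\beta^2 n^2$, so the only $L^2$ solution decays like $\ee^{-\beta n r}$. Hence the eigenvalues are exactly the $\omega$ for which these two recessive solutions are proportional. I would therefore build, for $\omega$ in a disc $B(\omega_{\rm app},Cn^{-1/2})$, the recessive solution $\phi^{\rm out}_-$ on $(0,r_0-\rho_n)$ and $\phi^{\rm out}_+$ on $(r_0+\rho_n,\infty)$, where $\rho_n=n^{-3/4+\sigma}$ for a small $\sigma>0$. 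On these outer intervals the potential $k(\cdot\,;\omega)$ of \eqref{def_of_k} is large --- of size at least $n^{3/2}$ near the matching radii and up to $n^2$ far away --- and, crucially, confined to a fixed sector away from $(-\infty,0]$; this uses $\Lambda'(r_0)=0$, $\Lambda''(r_0)>0$, $b_0>0$ and the non-recurrence of the value $\Lambda(r_0)$ from \textbf{Assumption~\ref{ass}}, which together keep $\gamma=n\Lambda-\omega$ bounded away from zero. I would run a rigorous Liouville--Green (WKB) analysis with quantitative (Olver-type) error control, uniform in $n$ and $\omega$ --- together with a separate large-index Bessel analysis at the regular singular point $r=0$ --- to obtain $\phi^{\rm out}_\pm$ and their logarithmic derivatives at the matching points $r_0\pm\rho_n$; since the complementary solutions are exponentially larger there, these data are essentially rigid.

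In the inner region $I_{\rm in}=(r_0-\rho_n,r_0+\rho_n)$ I would set $\xi=r-r_0$ and, following \eqref{intro_k_expansion}--\eqref{k0_and_k2} and Appendix~\ref{app_exp_of_k}, write $k=k_0(\omega)+k_2\xi^2+R(\xi;\omega)$, where $R$ gathers the higher Taylor coefficients in $\xi$, the $O(1/n)$ contributions of $a$ and $d$, and the part of the $\omega$-dependence not already carried by $k_0$. Passing to the Weber variable $z=k_2^{1/4}\xi$ turns the principal part into the parabolic cylinder equation with parameter $\nu(\omega)$ fixed by $k_0(\omega)/k_2^{1/2}=-(2\nu+1)$, so that $\nu\in\{0,1,2,\dots\}$ reproduces \eqref{ks_relation}--\eqref{choice_of_tilde_om_m_copy}, while $R$ becomes a genuine perturbation, small on the rescaled interval $|z|\lesssim n^\sigma$. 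A Volterra iteration around the parabolic cylinder functions $U(\nu,\pm z)$ then produces the two inner solutions $\Psi_\pm$ recessive at the respective ends of $I_{\rm in}$, each equal to the appropriate Weber function $w_m$ up to an $O(\|R\|)$ correction, and --- the key output --- an expansion of the inner Wronskian $W[\Psi_-,\Psi_+]$ as a nonvanishing prefactor times $1/\Gamma(-\nu(\omega))+E_{\rm in}(\omega)$. I would then glue: in the overlap $\rho_n\ll|\xi|\ll n^{-1/4}$ both the outer WKB solutions and the inner perturbed-Weber solutions reduce to the same Liouville--Green form (one checks the Weber phase integral reproduces $\int\sqrt{k}$ to the required order), so $\phi^{\rm out}_\pm$ and $\Psi_\pm$ coincide up to scalars, and a global solution of \eqref{eq_phi} in $H^1_0(\R^+)$ exists precisely when the dispersion function $D(\omega)\coloneqq W[\Psi_-,\Psi_+](\omega)$ vanishes.

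It remains to analyse $D(\omega)=0$. The map $\omega\mapsto\nu(\omega)$ is a biholomorphism of $B(\omega_{\rm app},C_Mn^{-1/2})$ onto a neighbourhood of $\{0,1,\dots,M-1\}$ with $\nu(\omega_m)=m-1$, and the circle $|\omega-\omega_{\rm app}|=C_Mn^{-1/2}$ encloses exactly $\omega_1,\dots,\omega_M$, since $|\mu_m|=\frac{2m-1}{2}n^{-1/2}(b_0^{1/2}\Lambda''(r_0)/p_0)^{1/2}$ is $<C_Mn^{-1/2}$ for $m\le M$ and $>C_Mn^{-1/2}$ for $m>M$ (the correction $\hat{\omega}_m=o(n^{-1/2})$ does not affect this). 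Collecting all errors --- the outer WKB remainder, the inner perturbation $R$, and the matching --- into an analytic $E(\omega)$, one shows that the nonvanishing prefactor times $1/\Gamma(-\nu(\omega))$ dominates $E(\omega)$ on small circles of radius $\sim n^{-1+\delta}$ about each $\omega_m$ (using that $1/\Gamma(-\nu)$ has a simple zero there with a controlled, if $n$-dependent, derivative in $\omega$) and also on the circle $|\omega-\omega_{\rm app}|=C_Mn^{-1/2}$. Rouch\'e's theorem then gives a unique simple zero $\omega_m=\omega_{\rm app}+\mu_m+O(n^{-1+\delta})$ of $D$ near each $\omega_m$ for $m\le M$, and exactly $M$ zeros of $D$ in the whole disc; at each such simple zero the space of solutions recessive at both endpoints is one-dimensional, spanned by the corresponding $\phi_m$, which is the asserted uniqueness. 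Finally, the eigenfunction bound \eqref{apriori_l2} follows by identifying $\phi_m$ on $I_{\rm in}$ with the matched solution, $=w_m+O(\|R\|)$ in $L^2$ with $n^{3/8}\|R\|\lesssim n^{-1/2+\delta}$, and using the exponential smallness of both $\phi_m$ and $w_m$ on the outer intervals.

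The step I expect to be the main obstacle is the uniform, quantitative WKB analysis in the outer region: controlling the Liouville--Green error-control function uniformly as $n\to\infty$ and over the whole $\omega$-disc, in particular through the transition zone $\rho_n\lesssim|r-r_0|\lesssim n^{-1/4}$, where $k$ grows from $\sim n^{3/2}$ to $\sim n^2$ and where the overlap with the inner expansion must be made to match, as well as the behaviour at the regular singular point $r=0$. A secondary difficulty is bookkeeping the errors precisely enough to reach the sharp $O(n^{-1+\delta})$ accuracy for $\omega_m$, which is one power of $n^{-1/2}$ finer than the eigenfunction accuracy because $k_0$ depends stiffly (through the factor $n^2$) on $\omega$.
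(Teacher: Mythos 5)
Your proposal is correct in outline but takes a genuinely different route from the paper. You treat \eqref{eq_phi} as a classical complex two-turning-point eigenvalue problem: build recessive Liouville--Green/WKB solutions $\phi^{\rm out}_\pm$ from $r=0^+$ and $r=\infty$, connect them through the inner region via a Volterra perturbation of the parabolic cylinder functions $U(\nu,\pm z)$, and encode everything in an Evans/dispersion function $D(\omega)=W[\Psi_-,\Psi_+]$ whose zeros you locate by Rouch\'e using the explicit $1/\Gamma(-\nu(\omega))$ structure. The paper does not use WKB at all. It makes the cut-off ansatz $\phi = \phi_{\rm in}\chi_{\rm in}+\phi_{\rm out}\chi_{\rm out}$, solves the resulting coupled inner--outer system by a Banach fixed point in Gaussian-weighted spaces via a Lyapunov--Schmidt reduction (the ``projected system''), and then determines $\widehat\omega$ from a one-dimensional ``reduced equation,'' also by Rouch\'e but applied to an explicit holomorphic function $f+g+h$ rather than to $D(\omega)$. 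Uniqueness is proved by a ``reverse'' gluing $\phi_{\rm in}=\phi\chi_{\rm in}$, $\phi_{\rm out}=\phi\chi_{\rm out}$, with a case split on whether $\omega$ is within $\hat D^{-1}n^{-1/2}$ of some $\omega_m$. Concretely: where you would run Olver-type uniform WKB through the transition zone and a Bessel analysis at $r=0$, the paper only proves a single elementary coercivity estimate, $\re k + A\,\im k \gtrsim p\,n^{3/2}$ (Lemma~\ref{Lk}--Lemma~\ref{lem:solvabilityouter}), and where you build $\Psi_\pm$ and their Wronskian, the paper uses Fredholm theory and a compactness/spectral argument for the complex Weber operator $L_\zeta$ (Lemmas~\ref{lem:solvabilityinnerprojected}--\ref{lem:theta0spectrum}). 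Your route is closer to the original Leibovich--Stewartson formal analysis and would deliver more explicit asymptotics of the dispersion relation, but at the price of the uniform error control you correctly flag as the main obstacle, which the paper's functional-analytic gluing deliberately sidesteps. Two small corrections to your scaling discussion: the inner Taylor expansion of $k$ is valid only for $|r-r_0|\ll n^{-1/2}$ (not $n^{-1/4}$), since the quartic error $n^4(r-r_0)^4$ must be dominated by $k_2(r-r_0)^2\sim n^3(r-r_0)^2$; and your inner solutions $\Psi_\pm$, built on $|\xi|\leq\rho_n$, must be extended past $|z|\sim n^\sigma$ to reach the overlap where they are to be identified with the WKB solutions.
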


%\dacomment{I changed to $M$, not $M-1$, in this last line.}

\begin{comment}
In particular, with the choice $\ell_{\rm in} = D_{\rm in}^{-1} n^{-3/4+\theta}$ for $\theta \in (0,1/8]$ and $n$ sufficiently large depending on $\theta$, we have
\begin{equation}
  |\widehat{\omega}| \lesssim n^{-1+4\theta} \, ,
\end{equation}
which according to~\eqref{eq:consequentialestimate} begets the estimate
\begin{equation}
    \| \Phi \|_{Y_w} \lesssim n^{-1/2+4\theta} \, .
\end{equation}
\end{comment}

We moreover provide precise estimates on $\phi_m$ in Gaussian weighted spaces, see Theorem~\ref{thm_main2} for a more precise statement. We note that the eigenfunctions $\phi_m$ are smooth and belong to the Sobolev space $H^k$ for all $k \geq 0$, as we demonstrate in Section~\ref{sec_prop_LL}.

The uniqueness assertion in Theorem~\ref{thm_main} provides an asymptotic classification of \emph{sequences} of unstable eigenfunctions $\phi_k$ and eigenvalues $- i\omega_k$ satisfying $|\omega_k - (n_k \Lambda_0 + ib_0^{1/2})| \lesssim n_k^{-1/2}$ and $n_k \to +\infty$ as $k \to +\infty$. Note that $C_M$ is such that $B(\omega_{\rm app}, C_M n^{-1/2})$ contains the first $M-1$ modes, see~Figure~\ref{fig_semicircle} below.

\subsubsection{Instability criteria} \label{sec:instabcrit}
We now discuss how Theorem~\ref{thm_main} relates to some known instability criteria. 

First, according to the semicircle theorem of Barston \cite{barston}, every unstable mode $\omega$ belongs to $B(\omega_0,\omega_{max})$, where
\[
\omega_0 \coloneqq \frac{n}{2} \left( \min_{r} \Lambda(r) + \max_r \Lambda(r) \right),\qquad \omega_{\rm max}^2 \coloneqq \frac{n^2}4 \left( \max_{r} \Lambda(r) - \min_r \Lambda(r) \right)^2 + \max_r \widetilde{R}(r)^2,
\]
and
\[
\widetilde{R}(r)^2 \coloneqq \left(      \left( r\Omega (r) \Omega'(r) \right)^2 + \Omega (r)^2 \left( \max_{r} \Lambda(r) - \min_r \Lambda(r) \right)^2 \right)^{\frac12} - r \Omega (r) \Omega'(r).
\]
%(Recall \eqref{def_of_Lambda} that $\Omega(r) \coloneqq V(r)/r$.)
In the case of the Batchelor vortex \eqref{trailing_vortex}, we obtain
\[
\omega_0 = n \Lambda (r_0)/2 \, , \qquad \omega_{\rm max} = \sqrt{ n^2 \Lambda (r_0)^2/4 + \max_r \widetilde{R} (r)^2 } \, ,
\]
see Figure~\ref{fig_semicircle}.
\begin{center}
\includegraphics[width=0.8\textwidth]{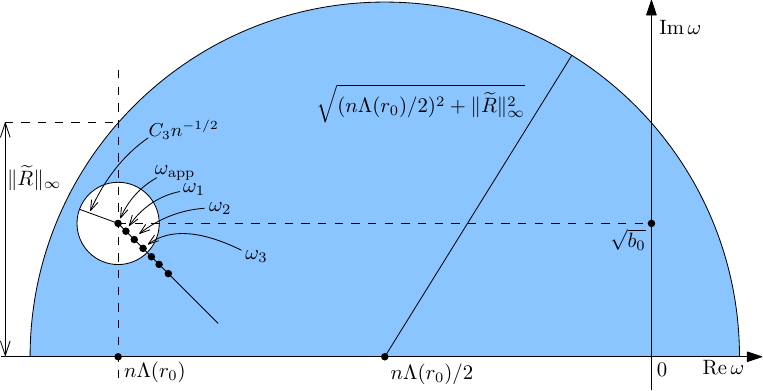}
  \captionof{figure}{Unstable modes  \eqref{omega_modes} and the Barston \cite{barston} semicircle theorem. As required by the semicircle theorem, $\sqrt{b_0} < \max_r \widetilde{R}$, i.e., all the modes obtained in Theorem~\ref{thm_main} belong to the semicircle $B(\omega_0,\omega_{\rm max})\cap \{ \im \, \omega >0 \}$. For example, in the case $q=1/4$, one has $\beta \approx 0.1589$, $r_0\approx 0.8252$ (see Appendix~\ref{sec_choice_r0_beta}),
  which then shows that $\sqrt{b_0} \approx 0.194 < 0.218 \approx \max_r \widetilde{R}$.  }\label{fig_semicircle} 
\end{center}

We also point out the criterion of Howard and Gupta \cite[(21)]{hg} that the flow \eqref{vortex_col} is stable if
\eqnb\label{hg_crit}
\alpha^2 \Phi - \frac{2\alpha n}{r^2} V W' - \frac14 \left( \alpha V' + n \Omega' \right)^2 \geq 0
\eqne
for all $r$. In the case of the Batchelor vortex \eqref{trailing_vortex}, this condition becomes
\eqnb\label{hg_trailing_v}
-\left( 1-\ee^{-r^2} \right) (1+\beta q) \ee^{-r^2} \frac{4q \beta }{r^2} - \frac{q^2}{r^6} \left( 1-\ee^{-r^2} \left( 1+r^2+\frac{\beta}q r^4 \right) \right)^2 \geq 0 \, ,
\eqne
which is actually violated for all $r>0$; the second term on the left-hand side (including the minus sign) equals $-\frac14 \Lambda'(r)^2$, which vanishes at $r_0$, the center of concentration of the ring modes. %is concentrated around $r=r_0$.

We also note that Howard and Gupta \cite{hg} provide an upper bound on $\im\,\omega$ for any unstable solution of \eqref{eq_for_u},
\[
(\im\,\omega)^2 \leq \max_r \frac{\beta^2 r^2}{1+\beta^2 r^2} \left( \frac14 (W')^2 - \Phi + \frac{1}{2\beta r^4} W' (r^3 V)' + \frac1{\beta^2} (\Omega')^2 \right), 
\]
see also the discussion in \cite[Section~5]{ls}. Furthermore, \cite[(23)]{hg} provides an analogue of the Rayleigh criterion for instability, 
\eqnb\label{rayleigh_by_hg}
\begin{split}
&\int_0^\infty \left( \frac{\beta^2 r }{1+\beta^2r^2} | (r u)' |^2 + \left( 1+ \frac{\beta^2  r}{\gamma }  \left( \frac{2n V+r^2 \gamma' }{r+\beta^2 r^3}  \right)' - \frac{2 \beta^3 V  n^2}{\gamma^2 (1+\beta^2 r^2)} \left( \beta rV +  W \right)'  \right) |u |^2 r \right) \d r =0
\end{split}
\eqne
Considering the scenario of neutral limiting modes (i.e., when $\im\,\omega >0$ is obtained as a perturbation of $\im \, \omega = 0$), we have that, for the limiting mode, $\gamma$ is a real function (recall~\eqref{def_of_gamma}), and one can deduce from \eqref{rayleigh_by_hg} that, if $\left( \beta rV +  W \right)' $ vanishes, then %\qquad 
\[ 
\left( \frac{2n V+r^2 \gamma' }{r+\beta^2 r^3}  \right)' \text{ must change sign in }(0,\infty) \, ,
\]
which is an analogue of the Rayleigh's inflection point criterion for parallel shear flows~\cite{rayleigh1879stability}. % \cite{lin_siam,lin2,lin3}).
However, in the case of the Batchelor vortex \eqref{trailing_vortex}, the vanishing is equivalent to $\beta q =1$, which does not occur with the restriction \eqref{beta_range}. This is not surprising, as the instabilities obtained by Theorem~\ref{thm_main} are not of neutral limiting type.

However, this raises an interesting open problem of existence of the upper limiting modes as $\beta \to (1/q)^-$, that is, when the relation between $\alpha$ %(frequency in $z$)
and $n$ %(frequency in $\theta$)
is not constant. In that case, the analysis of this paper breaks down (as the assumption $b(r_0)>0$ in \eqref{choice_r0_beta} fails). Stewartson and Capell~\cite{cs} provide some evidence of existence of such modes. However, the question of a rigorous verification and establishing a connection with the well-understood theory of instabilities bifurcating from neutral limiting modes (see~\cite{ko_jmfm,lin_siam} in the shear flow case) remains an interesting open problem. 
See~\cite{ls} for additional information.

\begin{comment}
[COMPARISON WITH EXISTING]

  This part has similarities to past work on instabilities, for example in Lin's analysis of linear instability of shear flows \cite{lin_siam} it is relation of the wavespeed $c=c(\alpha )$ in terms of the wave number $\alpha$ , and in Vishik's result \cite{vishik_1,vishik_2} the same relation is denoted by $z=z(h  )$ (see the review paper \cite[Section~4.7]{ABCD}, for example).
 
However, we emphasize that the system \eqref{in_out_problem} is different in the sense that finding solutions $\phi$ is not based on the neutral mode perturbation. In fact, the potential $k$ is not small near $r_0$, but rather it is of order $n^{3/2}$.
\end{comment}

\subsection{Nonlinear instability}
In our second result (Theorem~\ref{thm:nonlinearinstab} below) we deduce nonlinear instability from the modal instability. Namely, we assume existence of an unstable solution to the linearization of the $3$D Euler equations \eqref{3d_euler} in the form \eqref{perturbation_form}, and prove that such linear instability gives rise to \emph{nonlinear instability}. 

We denote the vortex column \eqref{vortex_col} by
\begin{equation}\label{vortex_col_repeat}
   \ou = V(r) e_\theta + W(r) e_z.
\end{equation}
We focus on the domain $\R^2\times \T$ for convenience, due to the periodicity in $z$ of the modes of the form~\eqref{perturbation_form}. After rescaling, we may take $\T$ to be the flat torus of unit length.

We suppose that $\ou$ satisfies  $\nabla^k \bar{u} \in L^\infty$ for all $k \geq 0$ and
\begin{equation}
    \label{eq:decayassumption}
    |\oo | + r|\nabla \oo | \lesssim r^{-\bb} \, ,
\end{equation}
for some $\bb > 0$, where $\oo \coloneqq \mathrm{curl}\, \ou$. Note that this is valid for the Batchelor vortex \eqref{trailing_vortex}. 
% \eqnb\label{def_of_Q_nonlin}
% \frac{2}{Q} >1+\bb.
% \eqne

We fix $Q\in (1,2)$ and consider the linearization of the $3$D Euler equations~\eqref{3d_euler},
\eqnb\label{linearization}
  \omega_t - \LL \omega = 0 \, ,
 \eqne
 %for $(x,t) \in \R^2 \times \T$,
in $L^Q(\R^2 \times \T)$, where 
\eqnb\label{def_LL}
- \LL \omega  \coloneqq [\ou , \omega ]-[u,\oo ] = \ou \cdot \nabla \omega - \omega \cdot \nabla \ou - \oo \cdot \nabla u + u \cdot \nabla \oo .
\eqne 
 Here we use the Lie bracket notation $[v,w]\coloneqq v\cdot \nabla w - w\cdot \nabla v$ for vector fields, and we also denote by $u$ the velocity field generated by $\omega$ via the Biot-Savart law. The definition of the Biot-Savart law on $\R^2\times \T$ is a subtle matter, since the domain $\R^2 \times \T$ allows, in particular, two dimensional flows (i.e. constant in $z$). In particular, according to the two-dimensional Biot-Savart law ${\rm BS}_2 : L^2 \to \dot H^1$, the velocity fields are only defined \emph{up to constants}, unless further conditions are imposed, e.g., more decay at spatial infinity; this is why we work in the space $L^Q$.  For any $p\geq 1$, we set
 \begin{equation}
     \label{def_Lpsigma} 
     L^p_\sigma \coloneqq \{ f \in L^p (\R^2 \times \T ; \R^3 ) \colon f\text{ is weakly divergence-free}\} \, .
 \end{equation}
 We will consider a decomposition of $L^Q_\sigma$ into invariant subspaces 
 \eqnb\label{def_Xna}
 X_{n,\alpha } \coloneqq \{ u\in L^Q_\sigma \colon u = v(r) \ee^{in\theta + i \alpha z } \text{ for some }n,\alpha \in \Z \text{ and } v\in L^Q((\R^+ ,r\d r);\R^3 ) \} \, .
 \eqne
Then $\LL \colon  D(\LL ) \subset L^Q_\sigma   \to L^Q_\sigma$ is a closed unbounded operator with dense domain
 \begin{equation}
 D(\mathcal{L}) := \{ \omega \in L^Q_\sigma : \bar{u} \cdot \nabla \omega \in L^Q \} \, .
 \end{equation}
We assume modal linear instability of the Euler equations \eqref{3d_euler} in vorticity form around $\ou$: % namely, that %L^2 ( \R^2 \times \T )
\eqnb\label{linear_ass}
\text{there exists }\eta  \in D(\mathcal{L}) \text{ and } \lambda \in \C\text{ such that }  \re \, \lambda >0\text{ and } \eta \ee^{\lambda t} \text{ satisfies \eqref{linearization}},
\eqne
namely, that
\eqnb\label{unst_eq}
 \LL \eta = \lambda \eta 
 \eqne
 holds in the sense of distributions. If an eigenfunction exists, then there must be an eigenfunction in one of the invariant subspaces $X_{n,\alpha}$ with the same eigenvalue.\footnote{To prove this, we apply the projection $P_{n,\alpha}$ onto the invariant subspace $X_{n,\alpha}$ to the eigenvalue equation~\eqref{unst_eq}. This projection commutes with the operator $\mathcal{L}$; this is essentially due to the fact that $\bar{u}$ is axisymmetric and $z$-independent. In order for $\eta$ to be non-zero, we must have that $P_{n,\alpha} \eta$ is non-zero for some particular $n,\alpha$; then $P_{n,\alpha} \eta$ is an eigenfunction with the same eigenvalue.}

 %\da{comment about single invariant subspace}

 %our first result,
 The existence of infinitely many such $\eta$s and $\lambda$s follows from Theorem~\ref{thm_main} for any vortex column \eqref{vortex_col} for which the sufficient condition \eqref{choice_r0_beta} holds, see Corollary~\ref{cor_lin_unst}.

To avoid confusion, we note that, in the context of Theorem~\ref{thm:nonlinearinstab} ``$\omega$'' will refer to the vorticity, rather than to the complex wave speed~\eqref{eq:twistedeval} in the linear instability part (Theorem~\ref{thm_main}). We will only use $\lambda $ to denote the assumed unstable eigenvalue. 
 
We can now state our second main result.
\begin{thm}[Linear implies nonlinear instability]
    \label{thm:nonlinearinstab}
Suppose that a vortex column \eqref{vortex_col_repeat} satisfying the decay assumption \eqref{eq:decayassumption} is linearly unstable in the sense of \eqref{linear_ass}. Then the vortex column is \emph{nonlinearly unstable} in the $L^\infty(\R^2 \times \T)$ topology of velocity in the following sense: For every $N\geq 2$ there exists $\delta > 0$ such that, for any $\varepsilon > 0$, there exist $T_\varepsilon > 0$ and a strong solution $u$ of the $3$D Euler equations \eqref{3d_euler} on $\R^2 \times \T \times [0,T_\varepsilon]$ such that
    \begin{equation}\label{ee3}
        %\| u|_{t=0} - \ou \|_{L^\infty} +
        \| \omega|_{t=0} - \oo \|_{L^{Q} \cap H^N} \leq \varepsilon
    \end{equation}
    and
    \begin{equation}
        \| u|_{t=T_\varepsilon} - \ou  \|_{L^\infty} \geq \delta  \, .
    \end{equation}
\end{thm}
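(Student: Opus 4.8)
The plan is to run the standard ``linear instability $\Rightarrow$ nonlinear instability'' scheme of Bardos--Guo--Strauss~\cite{bardosguostrauss} and Grenier, adapted as in~\cite{lin_siam,friedlandervishik}, in the vorticity formulation. \textbf{Setup.} Write $\omega=\oo+w$, $u=\ou+\tilde u$ with $\tilde u$ the Biot--Savart field of $w$ on $\R^2\times\T$; subtracting the steady equation, the perturbation solves
\[
\p_t w=\LL w+B(w,w),\qquad B(w,w):=-[\tilde u,w]=-\tilde u\cdot\nabla w+w\cdot\nabla\tilde u,
\]
with $\LL$ as in~\eqref{def_LL}. The ``helical'' subspace $\mathcal H:=\overline{\bigoplus_{k\in\Z}X_{kn,k\alpha}}$, where $\eta\in X_{n,\alpha}$ is the assumed eigenfunction (which, by the discussion after~\eqref{linear_ass}, may be taken in a single block, cf.~\eqref{def_Xna}), is invariant under both $\LL$ and $B$, so the whole argument stays in $\mathcal H$. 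Work in $Z:=L^Q\cap H^N$ for $N$ large; the stated range $N\ge2$ then follows since $\|\cdot\|_{H^N}$ is nonincreasing in $N$. Because $\div\ou=\div\tilde u=0$, the transport parts of $\LL$ and $B$ lose no derivative (commutator estimates), the remaining linear terms are bounded on $Z$ (Calderón--Zygmund bounds for $\nabla\tilde u$: here $Q<2$ makes the Biot--Savart law on $\R^2\times\T$ well-defined even on the $z$-independent modes generated by $B$, and $Q>1$ gives $L^Q$-boundedness of the singular integrals), and $\|B(w,w)\|_Z\lesssim\|w\|_Z^2$ using the decay of $\ou$ and $\oo$~\eqref{eq:decayassumption}. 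In particular the perturbation equation is locally well-posed in $Z$ (yielding strong, indeed $H^N$, solutions of~\eqref{3d_euler}), and as long as $\|w(t)\|_Z\le1$ one has $\|\omega(t)\|_{L^\infty},\|\nabla u(t)\|_{L^\infty}=O(1)$, so the usual continuation criterion keeps the solution alive.

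\textbf{Linear ingredients.} Since $\re\lambda>0$ there is no critical layer, so the eigenvalue equation reduces to the \emph{regular} second-order ODE~\eqref{eq_for_u}; hence $\eta$ is smooth with exponential decay, in particular $\eta\in Z$. Using that $\LL$ is real, combine $\eta\ee^{\lambda t}$ and $\bar\eta\ee^{\bar\lambda t}$ into a real linear solution $\eta^{\rm lin}(t)$ with $c\,\ee^{\lambda_0 t}\le\|\eta^{\rm lin}(t)\|_Z\le C\,\ee^{\lambda_0 t}$, $\lambda_0:=\re\lambda$; note $\eta^{\rm lin}(t)$ lies in the fixed, at most two-dimensional space $E:=\sp_\R\{\re\eta,\im\eta\}$, on which $\|\mathrm{BS}(\cdot)\|_{L^\infty}\simeq\|\cdot\|_Z$ (finite dimension, $\mathrm{curl}\circ\mathrm{BS}=\mathrm{id}$). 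The crucial input is the \emph{sharp} semigroup bound
\[
\bigl\|\ee^{t\LL}\bigr\|_{Z\to Z}\lesssim \ee^{\lambda_0 t},
\]
i.e.\ the growth bound of $\ee^{t\LL}$ must coincide with its spectral bound — the naive energy estimate only gives $\ee^{Ct}$ with $C\gg\lambda_0$. This rests on locating $\mathrm{spec}(\LL)$: the essential spectrum lies on the imaginary axis, because the particle trajectories of $\ou=Ve_\theta+We_z$ are helices with \emph{no exponential stretching} (the associated fluid Lyapunov exponent vanishes), cf.\ the essential-spectrum theory in~\cite{vishik_1,vishik_2}; hence for each $\epsilon>0$ the set $\{\re z\ge\epsilon\}\cap\mathrm{spec}(\LL)$ consists of finitely many eigenvalues of finite multiplicity, $\lambda_0:=\sup\re\,\mathrm{spec}(\LL)\in(0,\infty)$ is attained, and we may take $\eta$ to realize it. The bound then follows from a Gearhart--Prüss-type argument on $L^2$ within $\mathcal H$ (where the Biot--Savart ambiguity is absent for the modes $k\ne0$), transferred to $Z$; alternatively, from a direct resolvent analysis of the blocks $X_{kn,k\alpha}$ with control uniform in $k$.

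\textbf{Bootstrap and conclusion.} Take $w(0)=\varepsilon\,\eta^{\rm lin}(0)/\|\eta^{\rm lin}(0)\|_Z$, so $\|\omega|_{t=0}-\oo\|_{L^Q\cap H^N}=\varepsilon$, which is~\eqref{ee3}. Split $w=w^{\rm lin}+w^{\rm err}$, $w^{\rm lin}=\varepsilon\,\eta^{\rm lin}/\|\eta^{\rm lin}(0)\|_Z$ (so $\|w^{\rm lin}(t)\|_Z\simeq\varepsilon\ee^{\lambda_0 t}$), $w^{\rm err}(0)=0$, $\p_t w^{\rm err}=\LL w^{\rm err}+B(w,w)$. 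A continuity argument in which the continuation criterion and the Duhamel estimate reinforce each other — using the sharp semigroup bound, $\|B(w,w)\|_Z\lesssim\|w\|_Z^2$, and the inequality $2\lambda_0>\lambda_0$ — gives existence with $\|w(t)\|_Z\le2C\varepsilon\ee^{\lambda_0 t}$ and $\|w^{\rm err}(t)\|_Z\lesssim\varepsilon^2\ee^{2\lambda_0 t}$ on a window of length $\gtrsim\lambda_0^{-1}\log(1/\varepsilon)$. Let $T_\varepsilon$ be the first time $\|w(t)\|_Z=\kappa$ for a small fixed $\kappa$; then $T_\varepsilon<\infty$, $\varepsilon\ee^{\lambda_0 T_\varepsilon}\simeq\kappa$, and $\|w^{\rm err}(T_\varepsilon)\|_Z\lesssim\kappa^2\ll\kappa$, so $\|w^{\rm lin}(T_\varepsilon)\|_Z\ge\kappa/2$. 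Since $w^{\rm lin}(T_\varepsilon)\in E$ and $\mathrm{BS}\colon Z\to L^\infty$ is bounded for $N$ large,
\[
\|u|_{t=T_\varepsilon}-\ou\|_{L^\infty}=\|\tilde u(T_\varepsilon)\|_{L^\infty}\ge\|\mathrm{BS}(w^{\rm lin}(T_\varepsilon))\|_{L^\infty}-\|\mathrm{BS}(w^{\rm err}(T_\varepsilon))\|_{L^\infty}\gtrsim\kappa-C\kappa^2\ge\delta
\]
for a fixed $\delta=\delta(\kappa)>0$ independent of $\varepsilon$, which proves Theorem~\ref{thm:nonlinearinstab}. (A single corrector suffices because the nonlinearity is quadratic and the semigroup bound sharp; alternatively one may use a finite Grenier-type expansion $\sum_{j\le M}\varepsilon^j w^{(j)}$.)

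\textbf{Main obstacle.} The hard step is the sharp semigroup bound $\|\ee^{t\LL}\|_{Z\to Z}\lesssim\ee^{\lambda_0 t}$, i.e.\ showing that the growth bound of $\ee^{t\LL}$ on the non-Hilbertian, weighted space $L^Q\cap H^N$ over $\R^2\times\T$ equals the spectral bound. This requires both a precise location of the essential spectrum (tractable here only because $\ou$ has non-expanding helical trajectories) and a Gearhart--Prüss / resolvent argument outside a Hilbert space, most naturally via the block decomposition of $\mathcal H$ with estimates uniform in the helical frequency $k$. The remaining points — local well-posedness and the continuation bound in $L^Q\cap H^N$ on $\R^2\times\T$, and the precise Biot--Savart mapping properties that force $Q\in(1,2)$ — are more routine but still need care.
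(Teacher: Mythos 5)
Your overall framework (vorticity formulation, perturbation equation $\p_t w = \LL w + B(w,w)$, invariant helical subspaces, Biot--Savart on $\R^2\times\T$ with $Q\in(1,2)$, smoothness of the eigenfunction) matches the paper's, and you have correctly located the crux of the problem: the linear semigroup estimate. But your proposed resolution of that crux is precisely what the paper explicitly cannot (and does not) prove, and without it your single-corrector bootstrap does not close.

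Specifically, you assert the sharp growth bound $\|\ee^{t\LL}\|_{Z\to Z}\lesssim\ee^{\lambda_0 t}$ on the full space $Z=L^Q\cap H^N$, to be obtained either by a Gearhart--Pr\"uss argument on $L^2$ transferred to $Z$, or by resolvent estimates on the blocks $X_{kn,k\alpha}$ with \emph{control uniform in $k$}. The paper's central remark is that neither of these is available here: mode-by-mode, each $L_{n,\alpha}$ is a bounded operator, so growth bound $=$ spectral bound on each block (this is Lemma~\ref{lem:semigroupestimate}, with constants depending on $(n,\alpha)$), but the abstract theory ``does not guarantee uniform resolvent estimates with respect to $n,\alpha$'', and that is exactly ``the reason why we cannot say $S=\Lambda$'' and why the hypothesis is modal instability rather than unstable spectrum. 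Gearhart--Pr\"uss runs into the same uniformity obstruction (and requires a Hilbert space besides). Your Duhamel bound $\|w^{\rm err}(t)\|_Z\lesssim\varepsilon^2\ee^{2\lambda_0 t}$ uses the sharp semigroup bound at every application of $\ee^{(t-s)\LL}$ to a right-hand side $B(w,w)$ with no Fourier-mode restriction; if you replace that bound with what is actually available from naive energy estimates, you pick up $\ee^{C_0(N)t}$ with $C_0(N)$ possibly much larger than $2\lambda_0$, and the estimate no longer closes.

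The paper's route, following Grenier, is designed to \emph{avoid} ever invoking a uniform-in-mode semigroup estimate. One constructs $k$-th order Picard iterates $\omega_k$ backward in time; by induction these live in \emph{finitely many} Fourier modes $(\tau n,\tau\alpha)$, $\tau\in\Z$, so the mode-by-mode estimates of Lemma~\ref{lem:semigroupestimate} are enough (with constants depending on $k$) to get $\|\omega_k(t)\|_{L^Q\cap H^m}\lesssim_{k,m}\ee^{at}$ and $\|\omega_k-\omega_{k-1}\|\lesssim\ee^{a(k+1)t}$, where $a=\Re\lambda\in(S/2,S]$ so $2a>S$. Then the remainder $\wo$ is controlled by a \emph{naive energy estimate} in $L^Q\cap H^N$, which produces a linear growth constant $C_0(N)$; the crucial flexibility is to pick the order $k$ so that $2a(k+1)>C_0(N)+1$, which makes the forcing from the approximate solution decay fast enough backward in time to beat the energy-estimate loss. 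You mention ``alternatively one may use a finite Grenier-type expansion,'' but in your scheme that expansion is offered as a convenience rather than as the load-bearing device; once the sharp semigroup bound is dropped (as it must be), the high-order expansion is not optional, and your one-corrector argument collapses. Beyond this, your discussion of the essential spectrum via ``non-expanding helical trajectories'' is a reasonable heuristic but is not what the paper uses; the paper locates the essential spectrum on the imaginary axis by writing $L_{n,\alpha}=A_{n,\alpha}+B_{n,\alpha}$ with $A_{n,\alpha}$ a multiplication operator and $B_{n,\alpha}$ compact (Proposition~\ref{prop_spec}), which suffices for the mode-by-mode statement and sidesteps Lyapunov-exponent considerations.
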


%`Spectrally unstable' means that the linearization around $\bar{u}$ of the 3D Euler equations in vorticity form, defined as an unbounded operator on a suitable subspace of $L^2(\R^2 \times \T)$, has unstable spectrum.

%Our growing modes are considered in the natural enstrophy class, although the theorem could be proven for $L^p$ eigenfunctions ($1 < p < 2$). =
We expect that a similar theorem is possible in the energy space with a different proof. % One technicality is that the class $X$ eliminates precisely two Fourier modes; more specifically, it encodes non-2D perturbations and $m$-fold ($m \geq 2$) rotationally symmetric 2D perturbations. This is because the 2D Biot-Savart law is not, in general, well defined on $L^2(\R^2)$ vorticities.
The term `strong solution' is in reference to a particular well-posedness class which we explain in Section~\ref{sec_pf_thm2}. % below. %{\bf WO: this is classical well-posedness for $\omega\in L^Q\cap H^{N}$, isn't it?}
 
 %We will demonstrate that, actually, it is necessary that the instability is due to an unstable eigenvalue whose eigenfunction is smooth.

While the slab domain $\R^2 \times \T$ is convenient for discretizing the spectrum, it is inconvenient for the Biot-Savart law and Sobolev embedding. However, a similar nonlinear instability theorem will hold on $\R^3$ (cf. Corollary~1.2 in Grenier \cite{grenier}).

We emphasize that the instability provided by Theorem~\ref{thm:nonlinearinstab} ensures that the growth of the velocity persists no matter the order of the $H^N$ Sobolev norm of the initial perturbation in~\eqref{ee3}.

There are many mathematical works investigating under which conditions does linear instability imply nonlinear instability for the Euler equations~\cite{friedlanderstraussvishikearly,friedlanderstraussvishik,bardosguostrauss,friedlandervishik,linnonlinear,LinZengCPAM2013,LinZengCorrigendum2014}. Curiously, to the best of our knowledge, unstable vortex columns do not seem to be covered by any of them. The main difficulty is discussed in Section~\ref{sec_pf_thm2}. \\

In Section~\ref{sec_proof_thm1}, we construct the unstable eigenfunctions advertised in Theorem~\ref{thm_main}. In Section~\ref{sec_pf_thm2}, we prove the nonlinear instability of Theorem~\ref{thm:nonlinearinstab}.

\section{Proof of Theorem~\ref{thm_main}}\label{sec_proof_thm1}

%In this section we prove  Theorem~\ref{thm_main}. We first sketch the proof to outline the main difficulties and comment on them. We then prove the existence part in Steps~1--5 below (Sections~\ref{sec_solv_inner}--\ref{sec_solv_reduced}). We prove the uniqueness claim in Step~6 (Section~\ref{sec_uniqueness}). \\ 

Recall from Theorem~\ref{thm_main} that $m\geq 1$ represents the mode of the instability. From this point onward we will often write
\[
w(r) = w_m(r),
\] 
keeping in mind that the existence proof of Theorem~\ref{thm_main} is carried out for a fixed $m$.\\

The proof of Theorem~\ref{thm_main} is based on a gluing procedure partially inspired by~\cite{WeiHarmonicMap,ABCgluing}. Let $\eta \in C^\infty_0(\R ; [0,1])$ be a smooth cutoff such that $\eta (x) =1$ for $|x|\leq 1$ and $\eta (x) =0$ for $|x|\geq 2$. We set $\eta_{\ell}(\cdot) = \eta(\frac{\cdot-r_0}{\ell})$ and, given $l_{\rm out}, \ell_{\rm in}>0$, we define
\eqnb\label{cutoffs_def}
\begin{split}
\chi_{\rm in}(\cdot) &= \eta_{\ell_{\rm in}}\\
\chi_{\rm out}(\cdot) &= 1 - \eta_{\ell_{\rm out}} .
\end{split}
\eqne
We require that the two cutoffs $\chi_{\rm in}$ and $\chi_{\rm out}$ overlap in the sense that $\supp  \chi'_{\rm in} \subset \{ \chi_{\rm out} = 1 \}$ and vice versa, which holds given 
\begin{equation}\label{overlap_cond}
2\ell^{\rm out} \leq \ell^{\rm in} \, .    
\end{equation}
We make the ansatz
\begin{equation}
    \label{inner+outer}
\phi \coloneqq \phi_{\rm in} \chi_{\rm in} + \phi_{\rm out} \chi_{\rm out} \, .
\end{equation}
The ansatz involves some redundancy of the `inner solution' $\phi_{\rm in}$ and `outer solution' $\phi_{\rm out}$ on the overlap region; this is typical of gluing procedures. The Rayleigh equation $\phi''-k\phi =0$ (recall~\eqref{eq_phi}) holds  if
\eqnb\label{in_out_problem}
\begin{cases}
&\phi_{\rm in}'' - k \phi_{\rm in} + (2\phi_{\rm out}' \chi_{\rm out}' + \phi_{\rm out} \chi_{\rm out}'') =0 \hspace{0.3cm}\text{ on } {\rm supp} \, \chi_{\rm in} \\
&\phi_{\rm out}'' - k \phi_{\rm out} + (2\phi_{\rm in}'\chi_{\rm in}' + \phi_{\rm in}\chi_{\rm in}'' )=0 \hspace{0.7cm} \text{ on } {\rm supp} \, \chi_{\rm out}  \, .
\end{cases}%in the appendix
\eqne
We regard $\phi_{\rm in}$ and $\phi_{\rm out}$ as functions defined on $\R$ and $\R^+$, respectively. 

The `inner equation' resembles the Weber equation $\phi'' - (k_0 + k_2(r-r_0)^2) \phi = 0$, where $k_0 = O(n^{3/2})$ and $k_2 = O(n^3)$ were identified in~\eqref{k0_and_k2}, and, as noted below~\eqref{choice_of_tilde_om_m_copy}, it is best observed at the length scale $n^{-3/4}$. Therefore, we introduce the inner variable
\eqnb\label{def_xi}
\xi \coloneqq (r-r_0)n^{3/4} \, .
\eqne
We denote by 
\eqnb\label{kerr_def}
k_{\rm err} (r) \coloneqq k(r) - (k_0 + k_2 (r-r_0)^2)
\eqne
the remainder in the approximation of $k$, and we note that
\eqnb\label{Kerr_est}
k_{\rm err } (r) = in^2 \ho \frac{2p_0}{b_0^{1/2}}+ n (1+ n^3 (r-r_0)^4 ) O(1 )  
\eqne
provided that $|\mu_m| + |r-r_0| = O(n^{-1/2})$, see~\eqref{remainder_11_app} in the appendix for the precise statement and proof. %for $r-r_0=O(n^{-1/2})
%\dacomment{I don't love this quantification. I like the use of $O$ in conclusions (there exists a constant s.t.), but I like it less for assumptions.}
 We rescale the potential $k$ according to
\eqnb\label{k_rescaling}
\begin{split}
K (\xi ) &\coloneqq n^{-3/2} k(r)  , \quad K_0 \coloneqq n^{-3/2} k_0 , \quad K_2\coloneqq n^{-3} k_2 \\
K_{\rm err} (\xi ) &\coloneqq n^{-3/2} k_{\rm err} (r) = K (\xi ) - (K_0 + K_2 \xi^2 ) \, .
\end{split}
\eqne
We rescale the solution according to
\eqnb\label{rescalings}
\Phi_{\rm in} (\xi ) \coloneqq \phi_{\rm in} (r),\qquad X_{\rm out} (\xi )\coloneqq \chi_{\rm out} (r) \, ,
\eqne
and similarly for $\Phi_{\rm out}$ and $X_{\rm in}$. We denote by $R\colon L^2 \to L^2$  the rescaling from the outer variable to the inner variable,
\eqnb\label{def_rescaling}
(Rf)(\xi ) \coloneqq f(r) = f (r_0 + n^{-3/4}\xi ) \, .
\eqne
For example, $\Phi_{\rm in} = R\phi_{\rm in}$. %, $\Phi_{\rm out} = R\phi_{\rm out}$, $X_{\rm in}=R\chi_{\rm in}$, $X_{\rm out}=R\chi_{\rm out}$.
We loosely follow the convention that functions denoted by a capital letters are functions of $\xi$ and are rescalings, via $R$, of the corresponding function of $r$ denoted by the respective lower case letters. (One exception is the potential $k$, whose rescaling \eqref{k_rescaling} also involves multiplication by $n^{-3/2}$.) Notably,
\eqnb\label{norm_of_R}
\| R \|_{L^2 \to L^2} = n^{3/8} ,\qquad \| R^{-1} \|_{L^2 \to L^2} = n^{-3/8} \, .
\eqne

With the above notation, we rewrite the inner equation of system~\eqref{in_out_problem} as
\eqnb\label{inner_eq_rescaled}
{\Phi}_{\rm in}'' - (K_0 +K_2 \xi^2 )  {\Phi}_{\rm in} = K_{\rm err} {\Phi}_{\rm in} - R\phi_{\rm out} X_{\rm out}''  - 2 n^{-3/4}R\phi_{\rm out}' X_{\rm out}' \text{ on } {\rm supp} \, X_{\rm in} \, .
\eqne
The main point of the rescaled inner equation \eqref{inner_eq_rescaled} is that the coefficients $K_0$, $K_2$ \emph{do not depend on $n$}. The solutions of the homogeneous equation
\begin{equation}
    W_m'' - (K_0 +K_2 \xi^2 )  W_m = 0 \text{ on } \R
\end{equation}
are precisely constant multiples of the the Weber functions $W_m =  R w_m$, recall \eqref{weber_wm}, which implies in particular that $\| W_m \|_{L^2} = 1$. As mentioned in the beginning of the section, we will drop the index $m$. We make the ansatz
\begin{equation}
\Phi_{\rm in} = W + \Psi \, ,
\end{equation}
where $\Psi$ is a remainder whose PDE is
%, which lets us analyze the inner behaviour as well as identify the lengthscales $l_{\rm out}$, $\ell_{\rm in}$.
%where $W\coloneqq Rw$, to obtain a PDE for $\Psi$,
\eqnb\label{eq_Psi}
\Psi''- (K_0 + K_2 \xi^2 )\Psi =  K_{\rm err} W  + K_{\rm err} \Psi  + \Phi_{\rm out} X_{\rm out}''-2 (\Phi_{\rm out} X_{\rm out}')' \, ,
\eqne
satisfied on ${\rm supp} \, X_{\rm in}$. It will be more convenient to pose this PDE on $\R$, and we have the freedom to modify the potential off of ${\rm supp} \, X_{\rm in}$. We do so by cutting off by $\widetilde{\ell_{\rm in}}\coloneqq 2 \ell_{\rm in}$, $\widetilde{\chi_{\rm in}}\coloneqq \eta_{\widetilde{\ell_{\rm in}}}$, $\widetilde{X}_{\rm in} \coloneqq R\widetilde{\chi_{\rm in}}$, and we replace~\eqref{eq_Psi} by the same equation, but with the right-hand side multiplied by $\widetilde{X}_{\rm in}$:
\eqnb\label{eq_Psi_better}
\Psi''- (K_0 + K_2 \xi^2 )\Psi =  \widetilde{X}_{\rm in} (K_{\rm err} W  + K_{\rm err} \Psi ) + \Phi_{\rm out} X_{\rm out}''-2 (\Phi_{\rm out} X_{\rm out}')' =:G .
\eqne

The operator $\frac{\d^2}{\d \xi^2} - (K_0 + K_2 \xi^2)$ has a one-dimensional kernel spanned by $W$, and its range is $({\rm span} \, W^*)^\perp$ (see Lemma~\ref{lem:solvabilityinnerprojected}).\footnote{
Multiplying~\eqref{eq_Psi_better} equation by $W$ yields that $\int W G =0$, which suggest that this is a solvability condition for $\Psi$. 
} The equation~\eqref{eq_Psi_better} is therefore a suitable candidate for the \emph{Lyapunov-Schmidt reduction}. We thus define the projection $Q \colon L^2 \to L^2$ onto the range of $\frac{\d^2}{\d r^2}-(K_0 +K_2 \xi^2 )$:
\eqnb\label{def_of_Q}
Q F \coloneqq F - \left(\int F W \right) W^* .
\eqne
Indeed, if $\Upsilon'' - (K_0 +K_2\xi^2 ) \Upsilon =F$ then multiplication by $W$ and integration by parts shows that $\int F W =0$. We can thus rewrite \eqref{eq_Psi} as the \emph{projected equation}
\eqnb\label{inner_Psi_projected}
\Psi'' - (K_0 + K_2 \xi^2 ) \Psi = QG,
\eqne
coupled with the one-dimensional \emph{reduced equation} $(I-Q) G=0$, that is,
\eqnb\label{reduced_eq}
\int GW =0 \quad \Leftrightarrow \quad  \int  \left( \widetilde{X}_{\rm in}K_{\rm err} (W + \Psi ) - R\phi_{\rm out} X_{\rm out}''  -  2  n^{-3/4}R\phi_{\rm out}' X_{\rm out}' \right) W \, \d\xi =0 \, .
\eqne
The projected equation~\eqref{inner_Psi_projected} is uniquely solvable when we we supplement it with an additional condition
\begin{equation}
    \label{eq:Psibarcond}
    \int \Psi W^* = 0 \, ,
\end{equation}
for example, and when $K_{\rm err}$ is small. The latter condition dictates some constraints on $\ell_{\rm in}, \ell_{\rm out}$ (in addition to~\eqref{overlap_cond}), see the discussion below Lemma~\ref{lem:solvabilityprojected} for details.

Thus far, we have neglected the outer equation  (recall~\eqref{in_out_problem}). It will be extended to $\R^+$:
\begin{equation}
    \label{eq:extendedouterequation}
    \phi_{\rm out}'' - \widetilde{k} \phi_{\rm out} + (2\phi_{\rm in}'\chi_{\rm in}' + \phi_{\rm in}\chi_{\rm in}'' ) = 0 \text{ on } \R^+ \, ,
\end{equation}
and, after we analyze $k$ in the region $|r-r_0| \gtrsim n^{-3/4}$, where $\widetilde{k}$ is defined in \eqref{ktilde_def} and denotes an appropriate extension of $k$ guaranteeing coercivity.

To complete the construction, the projected equation~\eqref{inner_Psi_projected} with supplementary condition~\eqref{eq:Psibarcond} and the outer equation~\eqref{eq:extendedouterequation} will be solved simultaneously, as a system, and the solution $(\Psi,\phi_{\rm out})$ will be holomorphic with respect to $\omega$. The reduced equation~\eqref{reduced_eq} will determine $\omega$ by specifying the remaining $o(n^{-1/2})$ term $\widehat{\omega}=\widehat{\omega}(n)$ in the expansion~\eqref{choice_of_tilde_om_m_copy} of $\omega$, see Lemma~\ref{L_reduced}. To solve this equation, \emph{it will be necessary to examine precisely how $\omega$ enters the error term $K_{\rm err}$}, see~\eqref{fgh}. This is analogous to how the Plemelj formula enters the calculation,  for shear flows and vortices, of the one-sided derivative $c'(m_0)$ of the wavespeed $c$ with respect to the wavenumber $m$ as $\Im c \to 0^+$ at a limiting neutral mode $(\phi_0,m_0,c_0)$.\footnote{A \emph{limiting neutral mode} is a non-trivial solution $(\phi_0,m_0,c_0)$ of the Rayleigh equation~\eqref{eq:shearRayleigh} with $\Im c_0=0$ and which is, furthermore, approximable by solutions $(\phi,m,c)$ with $\Im c > 0$. In~\cite{ko_jmfm,lin_siam}, the notation $\alpha$ is used instead of $m$. In~\cite{vishik_1,vishik_2} and~\cite[Chapter 4]{ABCD}, $z$ is used instead of $c$. Of course, our calculation is different, and our unstable modes are not perturbations of a neutral mode.}  

We define the \emph{inner function space}
\eqnb\label{def_of_Y}
Y \coloneqq \{ F\in H^2 (\R ) \colon \| F \|_Y \coloneqq \|  F \|_{H^2 (\R) } + \|  \xi^2  F(\xi ) \|_{L^2 (\R )} <\infty \}
\eqne
and its Gaussian weighted analogue
\begin{equation}\label{def_of_Yw}
Y_w \coloneqq \{ F\in H^2 (\R ) \colon \| F \|_{Y_w} \coloneqq \|  |F''| + |\xi| |F'| + \xi^2 |F|  \|_{L^2_w} <\infty \} \, ,
\end{equation}
where
\begin{equation}
    \| F \|_{L^2_w} \coloneqq \|  \ee^{q_0 |K_2|^{1/2} \xi^2/\sqrt{8}} F \|_{L^2} \, ,
\end{equation}
with parameter $q_0$ taking any value in $(0,1)$. For convenience we fix  $q_0 \coloneqq 1/2$.
We define the \emph{outer function~space}
\eqnb\label{def_of_Z}
Z \coloneqq H^1_0 (\R^+) \, ,\qquad \| f \|_Z \coloneqq n^{-3/8} \| f' \|_{L^2} + n^{3/8} \| f \|_{L^2} \, .
\eqne
We now give a more precise version of the existence part of Theorem~\ref{thm_main}.

\begin{thm}[Existence for glued system]
\label{thm_main2}
Suppose that $v$ is a vortex column~\eqref{vortex_col} satisfying \textbf{Assumption~\ref{ass}}. Let $m \in \N$ and $\delta \in (0,1/8]$. There exist constants $D_{\rm in}, D_{\rm out} \geq 1$ such that, for all $n$ sufficiently large, the following property holds: Let
\eqnb\label{claim_choices_of_scales}
\ell_{\rm out} = D_{\rm out} n^{-3/4} ,\qquad \ell_{\rm in} = D_{\rm in}^{-1} n^{-3/4+\delta} \, .
\eqne
Then there exists a solution $(\Psi,\phi_{\rm out},\omega) \in Y \times Z \times \C $ to the glued system \eqref{eq_Psi}--\eqref{eq:extendedouterequation} satisfying
\begin{equation}
    \label{omega_modes}
\omega = \omega_{\rm app} + \mu_m + \hat{\omega} \, , \quad
  |\widehat{\omega}| \lesssim n^{-1+4\delta} \, ,
\end{equation}
\eqnb\label{apriori}
\| \Psi \|_{Y_w} \lesssim n^{-1/2+4\delta} \, , \quad
\| \phi_{\rm out} \|_{Z} \lesssim \exp\left( - C^{-1} n^{2\delta} \right) \, .
\eqne
\end{thm}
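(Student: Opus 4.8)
The plan is to solve the glued system \eqref{eq_Psi}--\eqref{eq:extendedouterequation} in two stages, as foreshadowed in the text. First, I freeze the eigenvalue $\omega$ in the small complex disc $B(\omega_{\rm app}+\mu_m,\,Cn^{-1+4\delta})$ and solve, \emph{for that $\omega$}, the pair consisting of the projected inner equation~\eqref{inner_Psi_projected} with the normalization~\eqref{eq:Psibarcond} and the extended outer equation~\eqref{eq:extendedouterequation}, obtaining a solution $(\Psi,\phi_{\rm out})=(\Psi(\omega),\phi_{\rm out}(\omega))$ that depends analytically on $\omega$ and obeys the bounds in~\eqref{apriori}. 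Second, I insert $(\Psi(\omega),\phi_{\rm out}(\omega))$ into the scalar reduced equation~\eqref{reduced_eq}; it becomes an analytic equation in the single complex variable $\omega$, and solving it pins down the remaining correction $\widehat\omega$ from~\eqref{choice_of_tilde_om_m_copy}, giving~\eqref{omega_modes}. The constants $D_{\rm in},D_{\rm out}\ge 1$ are chosen during the argument ($D_{\rm out}$ large enough for the overlap condition~\eqref{overlap_cond} and for the outer coercivity transition, $D_{\rm in}$ large enough that the inner contraction closes and the outer source is sufficiently small), and only afterwards is $n$ taken large.

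\textbf{The inner and outer solvability.} For the inner piece I use Lemma~\ref{lem:solvabilityinnerprojected} and Lemma~\ref{lem:solvabilityprojected}: the complex oscillator $\mathcal{A}:=\partial_\xi^2-(K_0+K_2\xi^2)$ has one-dimensional kernel $\mathrm{span}\,W$ and range $(\mathrm{span}\,\bar W)^\perp$, and the solution operator $QG\mapsto\Psi$ of $\mathcal{A}\Psi=QG$, $\int\Psi\bar W=0$, is bounded \emph{uniformly in $n$} from $L^2_w$ into $Y_w$ — the uniformity being exactly why the rescaling to $\xi$ was performed, which renders $K_0,K_2$ independent of $n$. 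Equation~\eqref{inner_Psi_projected} thus reads $\Psi=\mathcal{A}^{-1}Q\big(\widetilde X_{\rm in}(K_{\rm err}W+K_{\rm err}\Psi)+\Phi_{\rm out}X_{\rm out}''-2(\Phi_{\rm out}X_{\rm out}')'\big)$. The source $\widetilde X_{\rm in}K_{\rm err}W$ is controlled in $L^2_w$ using the structure~\eqref{Kerr_est}: its $\widehat\omega$-linear part is $O(n^{1/2}|\widehat\omega|)=O(n^{-1/2+4\delta})$, while the remaining part is $O(n^{-1/2}(1+\xi^4))$, and $(1+\xi^4)W$ lies in $L^2_w$ because the weight exponent $q_0|K_2|^{1/2}/\sqrt 8$ with $q_0=1/2<1$ is gentler than the Gaussian decay rate of $W$; the outer coupling terms are supported on the bounded set $\{X_{\rm out}\ne 1\}$ and are governed by $\|\phi_{\rm out}\|_Z$. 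The term $K_{\rm err}\Psi$ contracts: on $\mathrm{supp}\,\widetilde X_{\rm in}$ one has $|\xi|\lesssim \ell_{\rm in}n^{3/4}=D_{\rm in}^{-1}n^{\delta}$, hence $|K_{\rm err}|\lesssim n^{-1/2+4\delta}$ there. For the outer piece, \textbf{Assumption~\ref{ass}} gives $|\Lambda(r)-\Lambda(r_0)|\gtrsim 1$ away from $r_0$, so $|\gamma|\gtrsim n$ and each of $a/(n\gamma),b/\gamma^2,d/n^2$ is $O(n^{-1})$, whence $\Re k=pn^2(1+O(n^{-1}))\gtrsim n^2 r^{-2}$; only within distance $O(n^{-3/4})$ of $r_0$ does the cancellation of the ``$1$'' make $k$ small ($k_0=O(n^{3/2})$) and possibly sign-indefinite. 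I therefore choose the extension $\widetilde k$ to equal $k$ outside a fixed multiple of $\ell_{\rm out}$ and to be a suitable positive quantity inside, so that $f\mapsto\int_{\R^+}(|f'|^2+\widetilde k|f|^2)\,\d r$ is coercive on $Z=H^1_0(\R^+)$; inverting gives $\phi_{\rm out}$ from the source $2\phi_{\rm in}'\chi_{\rm in}'+\phi_{\rm in}\chi_{\rm in}''$ of~\eqref{eq:extendedouterequation}. Since $\chi_{\rm in}',\chi_{\rm in}''$ are supported where $|r-r_0|\sim\ell_{\rm in}$, i.e. $|\xi|\gtrsim D_{\rm in}^{-1}n^{\delta}$, and there $\phi_{\rm in}$ is, to leading order, the Weber function $w_m$, the source has size $\exp(-cD_{\rm in}^{-2}n^{2\delta})$, giving $\|\phi_{\rm out}\|_Z\lesssim\exp(-C^{-1}n^{2\delta})$; the modification of $k$ inside is harmless precisely because $\phi_{\rm out}$ is this small there.

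\textbf{Closing the coupled system and the reduced equation.} Define the map $T:(\Psi,\phi_{\rm out})\mapsto(\Psi^\sharp,\phi_{\rm out}^\sharp)$ by solving the inner and outer problems with sources built from the previous iterate. The estimates above, together with the overlap constraint~\eqref{overlap_cond} (valid for the choices~\eqref{claim_choices_of_scales} once $n$ is large), show that $T$ maps the ball $\{\|\Psi\|_{Y_w}\le An^{-1/2+4\delta},\ \|\phi_{\rm out}\|_Z\le A\exp(-C^{-1}n^{2\delta})\}$ into itself and is a contraction there, for suitable $A,D_{\rm in},D_{\rm out}$ and $n$ large; this yields $(\Psi(\omega),\phi_{\rm out}(\omega))$ satisfying~\eqref{apriori}. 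Analyticity in $\omega$ follows because $k$, hence $K_{\rm err}$ and $\widetilde k$, depend analytically on $\omega$ (the determinant below~\eqref{utheta_uz_from_ur}, equivalently $\gamma=n\Lambda-\omega$, never vanishes on $\R^+$ for these $\omega$), so $T(\cdot;\omega)$ is analytic in $\omega$ as a map on $Y_w\times Z$, uniformly a contraction, and the fixed point is the uniform limit of the analytic Picard iterates. Substituting into~\eqref{reduced_eq}, the left-hand side defines an analytic function $F(\omega)$, and by the precise form of how $\omega$ enters $K_{\rm err}$ (its $\widehat\omega$-linear term, cf.~\eqref{Kerr_est} and the later computation), together with $\int|W|^2=1$ and $\int W^2\ne 0$, one obtains $F(\omega)=c\,n^{1/2}\widehat\omega+\rho(\omega)$ with $c\ne 0$ independent of $n$ and $\rho$ analytic with $|\rho|\lesssim n^{-1/2}$ and $|\partial_{\widehat\omega}\rho|\lesssim n^{-1/2+4\delta}$ on the disc (the $n^{-1/2}$ coming from the $n^{-1/2}(1+\xi^4)$ part of $K_{\rm err}$, the $\Psi$-contributions being $O(n^{-1+4\delta})$ and the $\phi_{\rm out}$-contributions exponentially small). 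Hence $F(\omega)=0$ is equivalent to $\widehat\omega=-\rho(\omega_{\rm app}+\mu_m+\widehat\omega)/(c\,n^{1/2})$, whose right-hand side maps $\{|\widehat\omega|\le Cn^{-1+4\delta}\}$ into itself and is a contraction for $C$ large and $n$ large; the Banach fixed-point theorem (equivalently, Rouch\'e applied to $F$) produces the unique such $\widehat\omega$, completing the construction and establishing~\eqref{omega_modes} — this is Lemma~\ref{L_reduced}.

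\textbf{Main obstacle.} The delicate step is the outer problem: the Rayleigh potential $k$ is neither small nor sign-definite — it is $O(n^{3/2})$ and essentially imaginary near $r_0$, rising to a large positive $O(n^2)$ away from it — so one must engineer an extension $\widetilde k$ that is coercive on $H^1_0(\R^+)$ while leaving $k$ untouched on the overlap region, and one must quantify this transition finely enough that the \emph{borderline} scale $\ell_{\rm in}=D_{\rm in}^{-1}n^{-3/4+\delta}$ still sees the Weber function as (stretched-)exponentially small, which forces the compatible choice of $D_{\rm in},D_{\rm out},\delta$. Closely tied to this is the bookkeeping in the reduced equation: isolating the clean leading term $c\,n^{1/2}\widehat\omega$ requires the exact manner in which $\omega$ enters $K_{\rm err}$, which is the analogue here of the Plemelj/one-sided-wavespeed-derivative computation in the shear-flow and vortex theory.
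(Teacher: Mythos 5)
Your overall scheme matches the paper's: first a fixed-point argument for the coupled projected inner/outer system at fixed $\omega$, then Rouch\'e (equivalently, a one-dimensional contraction) for the reduced scalar equation~\eqref{reduced_eq}. The inner analysis, the use of the Gaussian weight, the role of $K_{\rm err}$ and the reduced-equation bookkeeping all line up with the paper. However, there is a genuine gap in your outer solvability argument.

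You claim that away from $r_0$ one has $|\gamma|\gtrsim n$, hence $\Re k = pn^2(1+O(n^{-1}))\gtrsim n^2r^{-2}$, and that the cancellation of the ``$1$'' by $b/\gamma^2$ is confined to the scale $O(n^{-3/4})$, so that truncating $k$ at scale $\ell_{\rm out}\sim n^{-3/4}$ yields a coercive quadratic form $f\mapsto\int(|f'|^2+\widetilde k|f|^2)$. Both assertions fail on the intermediate annulus $|r-r_0|\in[Cn^{-3/4},\,C^{-1}n^{-1/2}]$: there $\gamma = \tfrac12 n\Lambda''(r_0)(r-r_0)^2 - ib_0^{1/2} + O(n^{-1/2})$ is still essentially $-ib_0^{1/2}$, so $1+b/\gamma^2 = O(n^{-1/2})$ (not close to $1$), and consequently $\Re k = O(n^{3/2})$ \emph{with undetermined sign}. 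Since $\widetilde k = k$ on this annulus, your proposed functional is not coercive there, and the outer bound $\|\phi_{\rm out}\|_Z\lesssim n^{-3/4}\|f\|_{L^2}$ does not follow. The paper's Lemma~\ref{Lk} is precisely the resolution you are missing: one proves $\Im k\gtrsim n^{3/2}$ on this intermediate region (via the quadratic Taylor expansion of $\Lambda$ at $r_0$, which makes $\Im(\gamma^{-2})>0$), and then the coercivity is not of $\Re\widetilde k$ but of the linear combination $h:=\Re\widetilde k + A\,\Im\widetilde k\gtrsim p n^{3/2}$ from~\eqref{def_h}/\eqref{outer_h_tilde}, obtained by taking a corresponding linear combination of the real and imaginary parts of the energy identity~\eqref{outer_basic_est}. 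This is what yields Lemma~\ref{lem:solvabilityouter} and the boundedness of $\mathsf{C}$ and $\mathsf{B}$ in the contraction; without it the gluing scheme does not close.
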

(Recall \eqref{eq:firstomegaexp} and \eqref{choice_of_tilde_om_m_copy} that $\omega_{\mathrm{app}} = n\Lambda(r_0) + i b_0^{1/2}$ and $\mu_m = (1-i ) n^{-1/2} (2m-1)  \left( \frac{b_0^{1/2} \Lambda''(r_0) }{8 p_0} \right)^{1/2}$, respectively.)

In particular, the glued solution $\phi$ in~\eqref{inner+outer} satisfies Gaussian weighted-in-$\xi$ estimates when restricted to $|r-r_0| \leq 2\ell_{\rm in}$ and super polynomial-in-$n$ estimates when restricted to $|r-r_0| \geq 2\ell_{\rm in}$. By the uniqueness part of  Theorem~\ref{thm_main}, \emph{the unique solution $(\phi,\omega)$ satisfies all these estimates at~once}, that is, for every $\delta \in (0,1/8]$, \eqref{omega_modes}--\eqref{apriori} hold if $n$ is sufficiently large.

The precise reasoning for the choices of cut-off scales $\ell_{\rm in}$ and $\ell_{\rm out}$ is given in Section~\ref{sec_solv_proj}. The scheme for uniqueness is sometimes known as ``reverse gluing", and we discuss it in Section~\ref{sec_uniqueness}.

We proceed in the following way:
\begin{itemize}
    \item Section~\ref{sec_solv_inner}. We analyze the inner operator $\frac{\d^2}{\d \xi^2} - (K_0 + K_2\xi^2)$ on $\R$.
    \item Section~\ref{sec_solv_outer}. We analyze the outer operator $\frac{\d^2}{\d r^2} - \widetilde{k}$ on $\R^+$.
    \item Section~\ref{sec_solv_proj}. We solve the projected system ~\eqref{inner_Psi_projected},~\eqref{eq:Psibarcond}, and~\eqref{eq:extendedouterequation}.
    \item Section~\ref{sec_solv_reduced}. We solve the one-dimensional reduced equation~\eqref{reduced_eq} to complete Theorem~\ref{thm_main2}.
    \item Section~\ref{sec_uniqueness}. We prove the uniqueness assertion in Theorem~\ref{thm_main}.
\end{itemize}

\subsection{The inner equation}\label{sec_solv_inner}

Recall from~\eqref{ks_relation} that the constants $K_0,K_2$ satisfy $K_0 = \ee^{-3i\pi/4}|K_0|$, $K_2 = i|K_2|$, and $|K_0| = (2m-1) |K_2|^{1/2}$ for some $m\geq 1$, though we suppress the dependence on $m$. The main result of this section is the following.

\begin{lemma}[Solvability of projected inner equation]
    \label{lem:solvabilityinnerprojected}
For each $m\geq 1$ and $F\in L^2$, there exists a unique solution $\Upsilon \in H^2 \cap L^2 (\xi^2 \d \xi )$ of~\eqref{inner_Psi_projected} and \eqref{eq:Psibarcond}, that is,
\eqnb\label{model_inner_prob_withP}
\Upsilon'' - (K_0 + K_2 \xi^2 ) \Upsilon = Q F \, , \qquad \int \Upsilon W^* \, \d\xi = 0 \, ,
\eqne
and moreover,
\eqnb\label{non-gaussian_est}
\| \Upsilon \|_Y := \| \Upsilon \|_{H^2} + \| \xi^2 \Upsilon (\xi ) \|_{L^2} \lec \|  F \|_{L^2} \, .
\eqne
If additionally $q_0 \in [0,1)$ and $\ee^{q_0 |K_2|^{1/2} \xi^2/\sqrt{8}} F \in L^2$, then we have the Gaussian weighted estimate
\begin{equation}
    \label{gaussian_decay_lemma}
\| \ee^{q_0 |K_2|^{1/2} \xi^2/\sqrt{8}} (|\Upsilon''| + |\xi| |\Upsilon'| + \xi^2 |\Upsilon|) \|_{L^2} \lec_{q_0} \| \ee^{q_0 |K_2|^{1/2} \xi^2/\sqrt{8}} F \|_{L^2} \, .
\end{equation}
\end{lemma}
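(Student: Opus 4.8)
The plan is to analyze the one-dimensional operator $L := \frac{\d^2}{\d\xi^2} - (K_0 + K_2\xi^2)$ via its explicit spectral decomposition in terms of Hermite functions. After the change of variables $\zeta = K_2^{1/4}\xi$ (with the branch fixed by $\Re K_2^{1/4} > 0$, so that $K_2^{1/2} = i|K_2|^{1/2}$ and $K_2^{1/4}$ lies in the first octant), the operator becomes $K_2^{1/2}(\partial_\zeta^2 - \zeta^2) - K_0$, whose spectrum on the natural complex-rotated Hermite basis $h_j(\zeta) = H_j(\zeta)e^{-\zeta^2/2}$ consists of the eigenvalues $-K_2^{1/2}(2j-1) - K_0$, $j \geq 1$. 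The relation \eqref{ks_relation}, $K_0 = -(2m-1)K_2^{1/2}$, is exactly the statement that the $j=m$ eigenvalue vanishes, so $\ker L = \operatorname{span}\{W\}$ and the remaining eigenvalues $-K_2^{1/2}(2(j-m))$, $j\neq m$, are bounded away from $0$ in modulus (since $\Re K_2^{1/2}>0$ forces them to stay in a half-plane at distance $\gtrsim |K_2|^{1/2} \sim 1$ from the origin, as $|K_2|$ is an $n$-independent constant). First I would record that $\bar W$ spans the kernel of the formal adjoint $L^* = \frac{\d^2}{\d\xi^2} - (\bar K_0 + \bar K_2\xi^2)$, so that $(\operatorname{span}\bar W)^\perp$ is the natural range of $L$, consistent with the definition \eqref{def_of_Q} of $Q$.

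The existence and the estimate \eqref{non-gaussian_est} then follow by expanding $QF \in (\operatorname{span}\bar W)^\perp$ against the (complex, non-orthonormal but Riesz-type) Hermite basis and dividing term by term by the nonzero eigenvalues: this produces a unique $\Upsilon$ in $(\operatorname{span}\bar W)^\perp$, and the supplementary condition $\int\Upsilon\bar W = 0$ in \eqref{eq:Psibarcond} pins it down uniquely (adding any multiple of $W$ would violate it, since $\int W\bar W \neq 0$). For the norm bound, I would argue that the inverse $L^{-1}\colon (\operatorname{span}\bar W)^\perp \to (\operatorname{span}\bar W)^\perp$ gains two derivatives and two powers of $\xi$: concretely, $\|\Upsilon\|_{L^2} \lesssim \|F\|_{L^2}$ from the spectral gap, then $\|(K_0 + K_2\xi^2)\Upsilon\|_{L^2} \lesssim \|\Upsilon''\|_{L^2} + \|QF\|_{L^2}$ and an integration-by-parts/commutator estimate (testing $L\Upsilon = QF$ against $\Upsilon$ and against $\xi^2\Upsilon$, controlling cross terms by Cauchy--Schwarz and absorbing) closes the bound on $\|\xi^2\Upsilon\|_{L^2}$ and hence on $\|\Upsilon''\|_{L^2}$, giving $\|\Upsilon\|_Y \lesssim \|F\|_{L^2}$. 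All constants here are independent of $n$ because $K_0, K_2$ are.

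For the Gaussian-weighted bound \eqref{gaussian_decay_lemma}, the plan is a weighted energy estimate: set $\rho(\xi) := e^{q_0|K_2|^{1/2}\xi^2/\sqrt 8}$ with $q_0 \in [0,1)$ and test the equation $L\Upsilon = QF$ against $\rho^2\bar\Upsilon$ (or work with $\tilde\Upsilon := \rho\Upsilon$ and the conjugated operator $\rho L\rho^{-1}$). The key algebraic point is that the conjugation produces a potential of the form $-(K_0 + K_2\xi^2) + (\text{quadratic in }\xi\text{ with coefficient } q_0^2|K_2|/8 - \text{cross terms})$; since $K_2 = i|K_2|^{1/2}\cdot|K_2|^{1/2}$ has its real part coming only from the weight, and $q_0 < 1$ leaves a strictly positive coercive remainder $\gtrsim (1-q_0^2)|K_2|\xi^2$, the real part of the conjugated quadratic form is coercive, yielding $\|\rho\Upsilon\|_{L^2} + \|\xi\,\rho\Upsilon\|_{L^2} \lesssim_{q_0}\|\rho QF\|_{L^2} \lesssim \|\rho F\|_{L^2}$. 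The derivative and $\xi^2$ pieces then follow as before by elliptic regularity applied to the conjugated equation. One must check $\|\rho QF\|_{L^2} \lesssim \|\rho F\|_{L^2}$, i.e. that the rank-one correction $(\int FW)\bar W$ respects the weight — this holds because $\bar W$ itself decays like $e^{-|K_2|^{1/2}\xi^2/\sqrt 8}$ (faster than $\rho^{-1}$ for $q_0<1$) and $|\int FW| \lesssim \|\rho F\|_{L^2}\|\rho^{-1}W\|_{L^2} \lesssim \|\rho F\|_{L^2}$.

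The main obstacle I anticipate is the Gaussian-weighted estimate, specifically verifying that the conjugated quadratic form stays coercive uniformly for all $q_0 \in [0,1)$ and tracking that the borderline case $q_0 \to 1$ (which would match the true decay rate of $W$ itself and destroy coercivity) is correctly excluded; getting the constant in \eqref{gaussian_decay_lemma} to depend only on $q_0$ (through $1/(1-q_0^2)$, say) and not on $n$ requires care with the cross terms $\rho'\rho^{-1} = q_0|K_2|^{1/2}\xi/\sqrt2$ in the conjugation. The spectral-gap argument for existence and \eqref{non-gaussian_est} should be comparatively routine once the complex Hermite diagonalization is set up cleanly, the one subtlety being that the complex Hermite functions are not orthonormal, so one works with the biorthogonal system $\{h_j, \bar h_j\}$ rather than an orthonormal basis.
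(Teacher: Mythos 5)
Your overall architecture (Wick rotation to the complex-rotated oscillator, a kernel spanned by the $m$-th Hermite function with the adjoint kernel spanned by its conjugate, integration-by-parts estimates with cutoffs or Gaussian weights) matches the paper's. But the proposed existence mechanism has a genuine gap.

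You propose to get existence and the $L^2$ bound by ``expanding $QF$ against the (complex, non-orthonormal but Riesz-type) Hermite basis and dividing term by term by the nonzero eigenvalues.'' The rotated Hermite functions $G_j(\ee^{i\zeta/2}\cdot)$ for $\zeta\neq 0$ are \emph{not} a Riesz basis of $L^2(\R)$; this is a well-documented feature of the non-self-adjoint harmonic oscillator (Davies, Boulton, Krej\v{c}i\v{r}\'ik--Siegl): the spectral projections onto the individual eigenspaces have operator norms growing exponentially in $j$, so completeness holds but there is no uniform expansion, and termwise division by the eigenvalue gaps does not yield a bounded inverse. Concretely, the ``spectral gap'' $|{-}K_2^{1/2}\,2(j-m)|\gtrsim 1$ does not control $\|L^{-1}\|$ when the basis constants blow up. The paper explicitly flags this --- see the Remark after Corollary~\ref{cor:Inonzero}, ``We do not address completeness of such a `basis' here'' --- and avoids it altogether: it shows $D(L_\zeta)\hookrightarrow L^2$ is compact (Lemma~\ref{lem_embedding}), hence $L_\zeta$ has compact resolvent and $\lambda-L_\zeta$ is Fredholm with closed range, then identifies ${\rm range}(\lambda-L_\zeta)=(\ker(\bar\lambda-L_{-\zeta}))^\perp$ by the Banach closed range theorem (Lemma~\ref{lem_closed_range_thm}), and finally gets boundedness of the inverse on the complement of the kernel from the bounded inverse theorem. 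If you want to keep an estimate-driven route, the correct way to close the loop is: prove the a priori bounds (your integration-by-parts step is fine and is essentially the paper's Lemma~\ref{lem_decay}), use those to show the range is closed, and then invoke Fredholm or an approximation/continuity argument for surjectivity onto $(\operatorname{span}\bar W)^\perp$ --- not a basis expansion.

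Two smaller points. Your classification of the spectrum ``rotates'' the Hermite spectrum, but you do not argue that this exhausts the spectrum of the non-self-adjoint operator; the paper needs both the continuity-of-algebraic-multiplicity argument (to keep eigenvalues simple as $\zeta$ deforms from $0$) and an ODE argument via parabolic cylinder asymptotics (to rule out spectrum elsewhere). Finally, your account of the Gaussian-weight bound is correct in spirit and matches the paper's second choice of $\mu$ in Lemma~\ref{lem_decay}; the one thing to be careful about, which you already anticipate, is that the cross term from $|\mu'|$ saturates the Young-inequality absorption exactly at $q_0=1$, which is why the constant degenerates as $q_0\to 1^-$.
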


We emphasize that the right-hand side of the inner equation \eqref{eq_Psi_better} involves $K_{\rm err} \Psi$, and so, in order to use Lemma~\ref{lem:solvabilityinnerprojected} to solve \eqref{eq_Psi_better}, one also needs to control $K_{\rm err}$. This is taken into account in the solvability lemma of the entire projected system \eqref{inner_Psi_projected}--\eqref{eq:extendedouterequation} in Lemma~\ref{lem:solvabilityprojected}, where $K_{\rm err}$ is controlled using \eqref{eq:estsonthepotential}.
%Technically, it is also necessary to account for the approximation error $K_{\rm err}$, see~\eqref{eq:estsonthepotential}.

%recall \eqref{def_of_Y} for the definition of $Y$.\\

%This step provides a solvability theory for the projected equation \eqref{inner_Psi_projected}. We note that the approximation error (or the main equation \eqref{eq_phi} by the projected equation \eqref{inner_Psi_projected}) follows from an estimate on $K_{\rm err}$ (in \eqref{Kerr_est_for_op_ests})\\

%Note that
 %We actually show a stronger estimate, with the $H^1$ norm replaced by the $H^2$ norm.

 %We also emphasize that
 % Two paragraphs with the same intro phrase

Notably, the potential $K_0 + K_2\xi^2$ is complex, so the problem is not self-adjoint, and one is not guaranteed existence of an orthonormal basis of eigenfunctions. 
Nevertheless, we will classify the spectrum and eigenfunctions and apply the Fredholm theory to conclude. This classification is presumably well-known; however, we are unaware of a simple reference, and so we provide the details.

It will be convenient to normalize $K_2$ by changing variables: $x = \nu \xi$ with $\nu = |K_2|^{1/4}$. Then
\begin{equation}
	\label{eq:operatortocite}
    \frac{\d^2}{\d \xi^2} - K_0 - K_2 \xi^2 = |K_2|^{1/2} \left( \frac{\d^2}{\d x^2} + (2m-1) \ee^{i\pi/4 } - \ee^{i \pi/2} x^2 \right) \, .
\end{equation}

In order to analyze it, we will consider the spectral problem for the operators 
\begin{equation}
    \label{eq:theta0def}
    L_{\zeta} \coloneqq \frac{\d^2}{\d x^2} - \ee^{2i \zeta} x^2 \, ,
\end{equation}
where $\zeta \in (-\pi/2,\pi/2)$ and then~\eqref{eq:operatortocite} corresponds to the case $\zeta = \pi/4$. Formally, using the so-called Wick rotation, i.e. the change of variables $y=\ee^{i\zeta /2}x$, we obtain
\begin{equation}
     \frac{\d^2}{\d x^2} - \ee^{2i \zeta} x^2 = \ee^{i\zeta} \left( \frac{\d^2}{\d y^2} - y^2 \right) \, .
\end{equation}
It is well known (see \cite[Section~12]{nist}) that the $L^2(\R )$ eigenfunctions of the operator $\frac{\d^2}{\d y^2} - y^2$ are, up to constant multiple, precisely the Hermite functions $G_m(y) = \ee^{-y^2/2} H_m(y)$, with corresponding eigenvalues $-(2m-1)$, $m \geq 1$. When $\zeta \in (-\pi,\pi)$, $L_{\zeta}$ therefore has $L^2$ eigenfunctions $G_m(\ee^{i\zeta/2}x)$ with corresponding eigenvalue $-(2m-1) \ee^{i\zeta}$, $m \geq 1$. In Lemma~\ref{lem:theta0spectrum}, we verify that these eigenvalues, produced by Wick rotation, are algebraically simple and exhaust the spectrum.

To begin, we prove \emph{a priori} decay estimates. It will be convenient to devise a weight function
\begin{equation}
    w(q_0,\zeta)(x) = \exp \left( \frac{q_0}{2} (\Re  \ee^{i\zeta}) x^2 \right) \, .
\end{equation}
The eigenfunctions $G_m(\ee^{i \zeta/2} x)$ decay like $w^{-1}(1,\zeta)(x)$ as $|x| \to \infty$, up to polynomial corrections.

%\colr WO: shall we write $\cos \zeta $? \colb

\begin{lemma}[Decay estimates]\label{lem_decay}
Let $\zeta \in (-\pi/2,\pi/2)$, $\lambda \in \C$, and $f \in L^2$. Suppose that $\phi \in L^2$ satisfies
\begin{equation}
    \lambda \phi - \phi'' + \ee^{2i\zeta} x^2 \phi = f
\end{equation}
in the sense of distributions on $\R$. Then
\eqnb\label{decay_est_h2_and_x2_only}
\|  \phi \|_{H^2} + \| \xi^2 \phi  \|_{L^2 }  \lec (\Re \ee^{i\zeta})^{-1} \| f \|_{L^2}  + (\Re \ee^{i\zeta})^{-1} |\lambda| \, \| \phi \|_{L^2} \, .
\eqne
If additionally $q_0 \in [0,1)$ and $w f \in L^2$, then we have the Gaussian weighted estimate
\eqnb\label{gaussian_decay}
\| w(q_0,\zeta) (|\phi''| + |\xi| |\phi'| + \xi^2 |\phi|) \|_{L^2} \leq C(q_0,\zeta) \| w(q_0,\zeta)  f \|_{L^2} + C(|\lambda|,q_0,\zeta) \| \phi \|_{L^2} \, .
\eqne
\end{lemma}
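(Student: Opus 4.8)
The plan is to derive both estimates from a single multiplier (energy) argument applied to the equation $\lambda\phi - \phi'' + \ee^{2i\zeta}x^2\phi = f$, multiplying by an appropriate conjugate of $\phi$ (possibly with a weight) and integrating by parts over $\R$, then carefully taking real and imaginary parts. The key structural observation is that $\Re\,\ee^{i\zeta} > 0$ for $\zeta \in (-\pi/2,\pi/2)$, so the quadratic potential term has a genuinely coercive real part. First I would multiply the equation by $\bar\phi$ and integrate: this yields $\lambda\|\phi\|_{L^2}^2 + \|\phi'\|_{L^2}^2 + \ee^{2i\zeta}\|x\phi\|_{L^2}^2 = \int f\bar\phi$. (Justifying the integration by parts requires knowing a priori that $\phi' \in L^2$ and $x\phi \in L^2$; I would first obtain this by a standard mollification/cutoff approximation, or by noting $\phi \in L^2$ and $-\phi'' + \ee^{2i\zeta}x^2\phi = f - \lambda\phi \in L^2$ already forces $\phi$ into the domain of the maximal operator, whose form domain is $\{\phi \in L^2 : \phi', x\phi \in L^2\}$.) Taking the real part and using $\Re\,\ee^{2i\zeta}$ could be negative when $|\zeta|$ is close to $\pi/2$, so instead I would take a suitable linear combination: note $\Re\big(\ee^{-i\zeta}(\lambda\|\phi\|^2 + \|\phi'\|^2 + \ee^{2i\zeta}\|x\phi\|^2)\big) = (\Re\ee^{-i\zeta})(\lambda\text{-terms and }\|\phi'\|^2) + (\Re\ee^{i\zeta})\|x\phi\|^2$. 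Hmm — that mixes things; cleaner is to take $\Re$ of the plain identity to control $\|\phi'\|^2 + (\Re\ee^{2i\zeta})\|x\phi\|^2$ and, when $\Re\ee^{2i\zeta}\le 0$, pair with the imaginary part (which controls $(\Im\ee^{2i\zeta})\|x\phi\|^2$) to get $\|x\phi\|^2 \lesssim (\Re\ee^{i\zeta})^{-1}(\|f\|\|\phi\| + |\lambda|\|\phi\|^2)$ — the point being $|\ee^{2i\zeta}| = 1$ so $\max(|\Re\ee^{2i\zeta}|,|\Im\ee^{2i\zeta}|) \gtrsim 1$, and a short case analysis tied to the sign of $\cos\zeta$ closes it. Then Cauchy–Schwarz and Young's inequality on $\int f\bar\phi$ give control of $\|\phi'\|_{L^2}$ and $\|x\phi\|_{L^2}$ by the right-hand side; finally $\|\phi''\|_{L^2} \le \|f\|_{L^2} + |\lambda|\|\phi\|_{L^2} + \|x^2\phi\|_{L^2}$ directly from the equation, and $\|x^2\phi\|_{L^2}$ is handled by multiplying the equation by $x^4\bar\phi$ (or by interpolating/commutator estimates), giving~\eqref{decay_est_h2_and_x2_only}.

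For the Gaussian weighted estimate~\eqref{gaussian_decay}, the plan is to repeat the energy argument with the weight $w = w(q_0,\zeta) = \exp(\tfrac{q_0}{2}(\Re\ee^{i\zeta})x^2)$, i.e.\ multiply the equation by $w^2\bar\phi$ and integrate by parts. The new feature is the commutator term coming from $\int (-\phi'')\,w^2\bar\phi = \int |(\,w\phi)'|^2 - \int (w')^2|\phi|^2$ after writing $w^2\bar\phi\,\phi'' $ in terms of $(w\phi)'$; since $w' = q_0(\Re\ee^{i\zeta})x\,w$, the bad term is $q_0^2(\Re\ee^{i\zeta})^2\int x^2 w^2|\phi|^2$, which must be absorbed by the good term $\Re\,\ee^{2i\zeta}\int x^2 w^2 |\phi|^2$ arising from the potential — and this is exactly where $q_0 < 1$ is used, since the coefficient comparison is (schematically) $q_0^2(\Re\ee^{i\zeta})^2$ versus something $\gtrsim (\Re\ee^{i\zeta})^2$ after the same sign-of-$\cos 2\zeta$ case split, so one gains a factor $(1-q_0^2) > 0$. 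The lower-order terms involving $\lambda$ and $\int w^2 f\bar\phi$ are then absorbed, producing the weighted $L^2$ bounds on $w x\phi$ and $w(w\phi)'$, hence on $w\phi'$ (since $w'\phi$ is controlled by $wx\phi$); the bounds on $wx^2\phi$ and $w\phi''$ again follow from the equation and a second weighted multiplication by $x^4 w^2\bar\phi$ or by bootstrapping. The dependence $C(q_0,\zeta)$ degenerates as $q_0\to 1$ or $\zeta \to \pm\pi/2$, consistent with the statement, and the $\|\phi\|_{L^2}$ on the right (rather than $\|w\phi\|_{L^2}$) is legitimate because the unweighted estimate~\eqref{decay_est_h2_and_x2_only} already bootstraps $\phi$ into $L^2(x^2\,dx)$, which is enough to start the weighted iteration on dyadic shells, or alternatively one runs the weighted estimate with a truncated weight $\min(w, N)$ and lets $N\to\infty$ using monotone convergence.

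The main obstacle I anticipate is the rigorous justification of all the integrations by parts and the weighted energy identity: a priori $\phi$ is merely in $L^2$ solving the equation distributionally, the weight $w$ grows like a Gaussian, and one cannot naively multiply by $w^2\bar\phi$. The clean way around this is a two-step regularization: (i) first establish~\eqref{decay_est_h2_and_x2_only} by approximating with smooth compactly supported test functions and using elliptic regularity for $-\phi'' = f - \lambda\phi - \ee^{2i\zeta}x^2\phi \in L^2_{\rm loc}$ to get $\phi \in H^2_{\rm loc}$, combined with a cutoff $\chi_R(x) = \chi(x/R)$ to localize, controlling the error terms $\int \chi_R'' |\phi|^2$ etc.\ which vanish as $R\to\infty$ once $x\phi,\phi'\in L^2$ are known (a small bootstrap: first prove $x\phi,\phi'\in L^2$ qualitatively, then the quantitative bound); (ii) then for the weighted estimate, replace $w$ by the bounded, smooth, increasing truncations $w_N := \exp(\tfrac{q_0}{2}(\Re\ee^{i\zeta})\min(x^2,N))$ (or $w/(1+\varepsilon w)$), for which all manipulations are legitimate since $w_N$ and its derivatives are bounded, derive the estimate with constants independent of $N$, and pass to the limit by Fatou. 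Everything else — the case analysis on $\mathrm{sgn}(\cos 2\zeta)$, the Young's inequality absorptions, the factor $(1-q_0^2)$ — is routine once this framework is set up.
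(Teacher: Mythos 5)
Your proposal is correct in substance but takes a noticeably more cumbersome route than the paper, and the divergence happens at a point where you actually saw the cleaner option and rejected it. You briefly consider multiplying the energy identity by $\ee^{-i\zeta}$ and note it gives $(\Re \ee^{-i\zeta})(\lambda\text{-terms} + \|\phi'\|^2) + (\Re \ee^{i\zeta})\|x\phi\|^2$, then dismiss this because ``it mixes things''; but this is exactly what the paper does, and it is precisely the point. The paper multiplies the equation by $\ee^{-i\zeta}\mu\bar\phi$ (with a compactly supported Lipschitz cutoff $\mu$), and because $\Re\ee^{i\zeta} = \Re\ee^{-i\zeta} = \cos\zeta > 0$ for $\zeta\in(-\pi/2,\pi/2)$, \emph{both} the kinetic term $\int\mu|\phi'|^2$ and the potential term $\int\mu x^2|\phi|^2$ become coercive with the \emph{same} prefactor $\cos\zeta$; the $\lambda$-term just goes to the right-hand side, where its sign is irrelevant. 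This eliminates entirely the case analysis you propose on $\operatorname{sgn}(\cos 2\zeta)$. Your route (taking $\Re$, then pairing with $\Im$ when $\cos 2\zeta\le 0$) does work in the end and yields the same $(\cos\zeta)^{-1}$ degeneracy and the same $1-q_0^2$ gain, but it requires choosing a $\zeta$-dependent combination of the two identities and tracking constants through two regimes, whereas the paper's phase multiplier makes the estimate uniform in a single step. For the weighted estimate, the paper's single observation is the sharp inequality $|\mu'|^2 \le 4q_0^2(\Re\ee^{i\zeta})^2 x^2\mu^2$ for $\mu = \min(M, w^2)$, which, after the Young split $|\mu'||\phi'||\phi| = (|\mu'|\mu^{-1/2}|\phi|)(\mu^{1/2}|\phi'|)$, leads cleanly to the threshold $q_0<1$; this is the same gain you identify, arrived at without the sign split. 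One minor slip: to control $\|x^2\phi\|_{L^2}$ you should multiply the equation by $\ee^{-i\zeta}x^2\mu\bar\phi$ (so the potential contributes $\int\mu x^4|\phi|^2$), not by $x^4\bar\phi$. Your regularization plan — compactly supported cutoffs $\chi_R$ to justify the unweighted estimate, then truncated weights $\min(w,N)$ and monotone convergence for the Gaussian estimate — is exactly what the paper does and is sound.
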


\begin{proof} We first note that the equation implies that $\phi \in H^2_{\rm loc}(\R)$. We multiply the equation by $\ee^{-i\zeta} \mu  \phi^*$, where $\mu : \R \to [0,\infty)$ is compactly supported and Lipschitz, and integrate by parts:
\begin{equation}
    \label{eq:summe}
    \begin{aligned}
        (\Re \ee^{-i\zeta}) \int \mu  |\phi' |^2 + \Re \left( \ee^{-i\zeta} \int \mu ' \phi' \phi^* \right) + (\Re \ee^{i\zeta}) \int  \mu   x^2 |\phi|^2 &= \Re \left( \ee^{-i\zeta}  \int \mu  f \phi^* - \lambda \mu |\phi|^2 \right) \, .
    \end{aligned}
\end{equation}
Since $\Re \ee^{i\zeta} = \Re \ee^{-i\zeta}$, we may divide through and use basic manipulations to obtain
\begin{equation}
    \label{eq:myfirstbabyestimate}
    \begin{aligned} %(\Re \ee^{i\zeta})  
        \int \left( \mu |\phi' |^2 + \mu x^2 |\phi|^2 \right) &\leq (\Re \ee^{i\zeta})^{-1} \int \left( \mu |f| |\phi| + |\lambda| \, \mu |\phi|^2 + |\mu'| |\phi'| |\phi| \right) .
    \end{aligned}
\end{equation}
We first choose $\mu = \chi^2(x/R)$, where
\[
\chi(x) \coloneqq \begin{cases} 1 \qquad &|x| \leq 1 ,\\
2-|x|& x \in (1,2) ,\\
0& |x| \geq 2 \, ,
\end{cases}
\]
so that $|\mu'|^2 \leq 4\mu/R^2$. We split the boundary term, involving $\mu' \phi' \phi^* = \mu' \mu^{-1/2} \phi' \times \mu^{1/2} \phi^*$, using Young's inequality. We take $R\to \infty$ to obtain
\eqnb\label{base_case}
\int \left( |\phi'|^2+ x^2 |\phi |^2 \right) \lec (\Re \ee^{i\zeta})^{-1} \int \left( |f|^2 + |\lambda| |\phi |^2 \right) .
\eqne
(When $|\lambda| \ll \Re \ee^{i\zeta}$, we can additionally absorb the term $(\Re \ee^{i\zeta})^{-1} |\lambda| \int |\phi|^2$.) % into the left-hand side.

Next, we multiply the equation by $\ee^{-i\zeta} x^2 \mu \phi^*$ and integrate by parts to obtain
\begin{equation}
    \label{eq:mysecondbabyestimate}
    \begin{aligned} %(\Re \ee^{i\zeta})  
        \int \left( \mu x^2 |\phi' |^2 + \mu x^4 |\phi|^2 \right) &\leq (\Re \ee^{i\zeta})^{-1} \int \left( \mu x^2 |f| |\phi| + |\lambda| \, \mu x^2 |\phi|^2 + |\mu'| \, x^2 |\phi'| |\phi| + 2|x| |\mu| |\phi'| |\phi| \right) .
    \end{aligned}
\end{equation}
The terms containing $|\lambda| \mu x^2 |\phi|^2$ and $2|x| |\mu| |\phi'| |\phi|$ are controlled using the previous estimate~\eqref{base_case}. We take $R \to \infty$ to obtain
\begin{equation}
\int  x^2 |\phi' |^2  + \int x^4 |\phi |^2  \lec (\Re \ee^{i\zeta})^{-2}  \int \left(|f|^2 + |\lambda|^2 |\phi |^2 \right)  ,
\end{equation}
which together with the equation yields~\eqref{decay_est_h2_and_x2_only}.

With the above information, it will not be necessary to take $\mu$ compactly supported to show the Gaussian decay~\eqref{gaussian_decay}. In~\eqref{eq:myfirstbabyestimate} and~\eqref{eq:mysecondbabyestimate}, we choose $\mu = \min(M,w^2(q_0,\theta))$ with $M \geq 2$ and $q_0 \in [0,1)$. The crucial observation is that $|\mu'|^2 \leq 4 q_0^2 (\Re \ee^{i\zeta})^2 x^2 \mu^2$, which is the sharp inequality necessary to control the boundary term. 
The proof is completed by taking $M \to \infty$. \end{proof}

%\dacomment{It's possible to better quantify the dependence on the eigenvalue. Anyway...}

\begin{comment}
We now turn to the classification of the eigenvalues of $L_{\zeta}$. The eigenvalue problem %of the form \eqref{inner_spoiler} classical 
resembles a Sturm-Liouville problem, %which is classical subject,
the case of improper boundaries $\pm \infty$ having been treated in the %classical
book of Titchmarsh~\cite{titchmarsh}, for example. However, as the  %in \eqref{inner_spoiler}
coefficients are complex, the theory is not applicable.  %which makes the problem non-self-adjoint.
%This is the main difficulty of the classification Lemma \ref{lem_classification_inner}.
\end{comment}

To formalize the spectral problem for~\eqref{eq:theta0def}, it will be convenient to recall a few notions from functional analysis.
Let $L_{\zeta} \colon D(L_{\zeta}) \subset L^2(\R;\C) \to L^2 (\R ; \C )$ be the unbounded operator defined by \eqref{eq:theta0def}, i.e.,
\begin{equation}
L_{\zeta} \Psi (x) \coloneqq \Psi''(x)  - \ee^{2i\zeta} x^2\Psi (x) 
\end{equation}
%\begin{equation}
%L_{\zeta} \Psi \coloneqq \Psi'' - \ee^{2\zeta} \Psi 
%\end{equation}
with dense domain 
\begin{equation}
    D(L_{\zeta}) \coloneqq \{ f \in L^2 : L_0f \in L^2 \} \, .
\end{equation}
We equip $D(L_{\zeta})$ with the graph norm 
\begin{equation}\label{norm_DL}
    \| \Psi \|_{D(L_{\zeta})} \coloneqq \| \Psi \|_{L^2} + \| L_{\zeta} \Psi \|_{L^2} \, ,
\end{equation}
which is equivalent to
\eqnb\label{graph_norm_equivalence}
\| \Psi \|_{D(L_{\zeta})} \cong \| \Psi \|_{H^2} + \| \xi^2 \Psi \|_{L^2} .
\eqne
The ``$\lec $'' part of the equivalence follows trivially, while the ``$\gtrsim$'' part follows from \eqref{decay_est_h2_and_x2_only}. In particular, the operators are defined on a common domain. With~\eqref{graph_norm_equivalence}, it is not difficult to verify that $L_{\zeta}$ is closed, i.e., that $D(L_{\zeta})$ is a Banach space with the graph norm: If $\{ \Psi_k \}_{k\geq 0} \subset D(L_{\zeta})$ with $\| \Psi_k - \Psi \|_{L^2} \to 0$ and $\| L\Psi_k - f \|_{L^2} \to 0$ as $k\to \infty$, then $\Psi_k$ is a Cauchy sequence in the equivalent norm, and we deduce that $L_{\zeta}\Psi_k \to L_{\zeta}\Psi$.

One can further prove the following.
\begin{lemma}\label{lem_embedding} 
The embedding $D(L_{\zeta}) \subset L^2$ is compact.
\end{lemma}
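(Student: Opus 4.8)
The plan is to run the classical argument combining \emph{local} compactness (Rellich--Kondrachov) with \emph{tightness at infinity} coming from the $x^2$-weight built into the graph norm. By the equivalence~\eqref{graph_norm_equivalence} it suffices to prove that every sequence $\{\Psi_k\}_{k\ge 0}\subset D(L_\zeta)$ with
\[
\sup_{k}\bigl(\|\Psi_k\|_{H^2(\R)}+\|x^2\Psi_k\|_{L^2(\R)}\bigr)\le C<\infty
\]
has a subsequence converging strongly in $L^2(\R)$.

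First I would localize: for each $R>0$ the embedding $H^2((-R,R))\hookrightarrow L^2((-R,R))$ is compact, so by a diagonal extraction over $R=1,2,3,\dots$ we may pass to a subsequence (not relabelled) with $\Psi_k\to\Psi$ strongly in $L^2((-R,R))$ for every $R$, for some $\Psi\in L^2_{\loc}(\R)$. Next I would bound the tails uniformly in $k$: for $|x|\ge R$ one has $|\Psi_k(x)|^2\le R^{-4}|x^2\Psi_k(x)|^2$, hence $\int_{|x|\ge R}|\Psi_k|^2\,\d x\le C^2R^{-4}$, which in particular shows $\Psi\in L^2(\R)$. Combining the two: given $\varepsilon>0$ fix $R$ with $C^2R^{-4}<\varepsilon$; then for all $k,l$,
\[
\|\Psi_k-\Psi_l\|_{L^2(\R)}^2\le\|\Psi_k-\Psi_l\|_{L^2((-R,R))}^2+2\!\!\int_{|x|\ge R}\!\!\bigl(|\Psi_k|^2+|\Psi_l|^2\bigr)\,\d x\le\|\Psi_k-\Psi_l\|_{L^2((-R,R))}^2+4\varepsilon,
\]
and the first term tends to $0$ as $k,l\to\infty$ since $\Psi_k$ converges in $L^2((-R,R))$. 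Thus $\{\Psi_k\}$ is Cauchy in $L^2(\R)$ and converges to $\Psi$, which gives the compactness of $D(L_\zeta)\subset L^2$.

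There is no genuine obstacle here; the only point to be careful about is that boundedness in $H^2$ alone does \emph{not} yield compactness on the unbounded domain $\R$ (mass can escape to infinity), and it is precisely the $\|x^2\Psi\|_{L^2}$ contribution to the graph norm that supplies the missing tightness. An equivalent formulation would extract a weak-$H^2$ limit together with a weak limit of $x^2\Psi_k$ in $L^2$ and then upgrade to strong $L^2$ convergence using the same tail estimate; the Cauchy-sequence presentation above is perhaps the most transparent.
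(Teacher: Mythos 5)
Your argument is correct and is essentially the same as the paper's: a diagonal extraction over increasing radii using Rellich--Kondrachov locally, combined with the tail decay supplied by the $\|x^2\Psi\|_{L^2}$ term in the graph norm. You simply spell out the details that the paper leaves as a sketch.
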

\begin{proof} The proof can be obtained by a diagonal argument using a sequence of increasing radii $R_n\to \infty$, as well as the compact embedding $H^2 (-R_n ,R_n)\subset L^2 (-R_n,R_n)$ for each $n$, and the decay at $\pm \infty$ provided by $\| \cdot \|_{D(L_{\zeta})}$. % We omit the details. 
\end{proof}

As a consequence of Lemma~\ref{lem_embedding}, the operator $L_{\zeta}$ has compact resolvent, i.e., there exists $\lambda$ in the resolvent set $\varrho(L_{\zeta})$ (which is non-empty, as it includes $\{ \Re \lambda > 0 \}$) such that $R(\lambda,L_{\zeta}) : L^2 \to L^2$ is compact. Therefore, $\sigma(L_{\zeta})$ consists of isolated eigenvalues with finite multiplicity, see~\cite[p. 187, Theorem 6.29]{Katobook}. Thus, to classify the spectrum, it will suffice to classify eigenvalues.

\begin{lemma}[Spectrum of $L_{\zeta}$]
    \label{lem:theta0spectrum}
Let $\zeta \in (-\pi/2,\pi/2)$. Then $\sigma(L_{\zeta})$ consists of algebraically simply eigenvalues $- \ee^{i\zeta} (2m-1)$, $m \geq 1$, with corresponding eigenspaces spanned by $G_m(\ee^{i\zeta} \cdot)$.
\end{lemma}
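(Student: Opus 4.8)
The plan is as follows. Since $L_\zeta$ has compact resolvent (Lemma~\ref{lem_embedding} and the remarks after it), $\sigma(L_\zeta)$ consists of isolated eigenvalues of finite algebraic multiplicity, so it suffices to exhibit the claimed eigenpairs, to show no other eigenvalues occur, to show each eigenspace is one dimensional, and to rule out Jordan chains. For the eigenpairs, put $g_m(x) := G_m(\ee^{i\zeta/2}x)$, where $G_m = \ee^{-y^2/2}H_m$ is as above and satisfies $G_m'' - y^2 G_m = -(2m-1)G_m$; a chain-rule computation gives $g_m'' - \ee^{2i\zeta}x^2 g_m = -\ee^{i\zeta}(2m-1)g_m$, and since $|g_m(x)| \lec (1+|x|)^{m-1}\ee^{-\frac12(\cos\zeta)x^2}$ with $\cos\zeta > 0$ (as $|\zeta| < \pi/2$) we have $g_m \in D(L_\zeta)$, so $-\ee^{i\zeta}(2m-1) \in \sigma(L_\zeta)$ for all $m \geq 1$. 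For geometric simplicity, note that any eigenfunction $\phi$ solves the linear ODE $\phi'' = (\ee^{2i\zeta}x^2 + \lambda)\phi$ with entire coefficients, hence extends to an entire function on $\C$, and lies in $D(L_\zeta) \hookrightarrow H^2(\R)$ by~\eqref{graph_norm_equivalence}, so $\phi,\phi' \to 0$ at $\pm\infty$; the Wronskian of two solutions of this ODE is constant, and for two eigenfunctions it tends to $0$ at infinity, hence vanishes, so they are linearly dependent.

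The main step is exhaustion. Let $\lambda \in \sigma(L_\zeta)$ with entire eigenfunction $\phi$. The holomorphic change of variables $\Phi(y) := \phi(\ee^{-i\zeta/2}y)$ yields a nonzero entire function solving the harmonic-oscillator equation $\Phi'' - y^2\Phi = \widetilde\lambda\,\Phi$ with $\widetilde\lambda := \ee^{-i\zeta}\lambda$. If one knew that $\Phi|_\R \in L^2(\R)$, then $\widetilde\lambda$ would be an eigenvalue of the self-adjoint operator $\tfrac{\d^2}{\d y^2} - y^2$, hence (as recalled above) $\widetilde\lambda = -(2m-1)$ for some $m\geq 1$ and $\Phi|_\R$ a multiple of $G_m$; since $\Phi$ and $G_m$ are entire and agree on $\R$, this forces $\lambda = -\ee^{i\zeta}(2m-1)$ and $\phi = \Phi(\ee^{i\zeta/2}\,\cdot\,)$ a multiple of $g_m$, completing the proof. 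To see $\Phi|_\R \in L^2(\R)$: Lemma~\ref{lem_decay} (with $f = 0$, $q_0 = \tfrac12$) gives $|\phi(x)| \lec \ee^{-\frac14(\cos\zeta)x^2}$, which is super-exponential decay of $\Phi$ along the rays $\arg y = \zeta/2$ and $\arg y = \zeta/2 + \pi$. At the irregular singular point $y = \infty$ of $\Phi'' = (y^2 + \widetilde\lambda)\Phi$, the solutions restricted to the sector $|\arg y| < \pi/4$ split into the span of one solution decaying super-exponentially and a line of solutions growing super-exponentially (classical parabolic-cylinder asymptotics, \cite{nist}, or a Liouville--Green argument). Since $|\zeta/2| < \pi/4$, the decay of $\Phi$ along $\arg y = \zeta/2$ forces $\Phi$ to be the recessive solution, which then decays super-exponentially throughout $|\arg y| < \pi/4$, in particular along $\R_{>0}$; symmetrically, the decay along $\arg y = \zeta/2 + \pi$ gives decay along $\R_{<0}$. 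Hence $\Phi|_\R \in L^2(\R)$. (Equivalently, one may write every solution of the $\lambda$-equation in terms of the parabolic cylinder functions $U\big(\tfrac12\ee^{-i\zeta}\lambda,\pm\sqrt2\,\ee^{i\zeta/2}x\big)$, whose Wronskian is proportional to $1/\Gamma\big(\tfrac12 + \tfrac12\ee^{-i\zeta}\lambda\big)$; an $L^2$ solution exists precisely when this vanishes, i.e. $\lambda = -\ee^{i\zeta}(2m-1)$.)

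For algebraic simplicity, fix $m$, set $\lambda_m := -\ee^{i\zeta}(2m-1)$, and suppose for contradiction there is a Jordan chain, i.e. some $\psi \in D(L_\zeta)$ with $(L_\zeta - \lambda_m)\psi = g_m$. Pairing with $g_m$ (the bilinear pairing $\int_\R (\cdot)(\cdot)\,\d x$, not the Hermitian one) and integrating by parts twice --- the boundary terms vanish because $g_m,g_m'$ decay super-exponentially while $\psi,\psi' \to 0$ --- gives $\int_\R g_m^2 = \int_\R g_m(L_\zeta - \lambda_m)\psi = \int_\R \big((L_\zeta - \lambda_m)g_m\big)\psi = 0$. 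On the other hand, substituting $y = \ee^{i\zeta/2}x$ and rotating the contour back to $\R$ by Cauchy's theorem --- the integrand $\ee^{-y^2}H_m(y)^2$ is entire and is $O\big((1+|y|)^{2(m-1)}\ee^{-(\cos\zeta)|y|^2}\big)$ in the sector between the two rays, so the joining arcs at infinity contribute nothing --- gives $\int_\R g_m^2 = \ee^{-i\zeta/2}\int_\R \ee^{-y^2}H_m(y)^2\,\d y \neq 0$, a contradiction. Hence $\lambda_m$ is algebraically simple.

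The step I expect to be the main obstacle is exhaustion: since $L_\zeta$ is not self-adjoint there is no orthonormal eigenbasis to fall back on, and the Wick rotation only produces decay of a putative eigenfunction along a complex ray; converting this into $L^2(\R)$-membership --- which is where the recessive/dominant dichotomy at the irregular singular point (or, equivalently, the explicit parabolic cylinder representation and the $\Gamma$-function Wronskian) enters --- is the delicate point.
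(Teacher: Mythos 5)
Your proof is correct, and it takes a genuinely different route from the paper's in two of the three steps. For exhaustion, you both ultimately rest on the asymptotics of solutions at the irregular singular point $\infty$: the paper spells this out explicitly by expressing the solution space via the parabolic cylinder functions $U(a,z)$, $V(a,z)$ on the line $z = \ee^{i\zeta/2}y$, observing $V$ grows along $x \to +\infty$ and $U$ grows along $x \to -\infty$ unless $1/\Gamma(a+\tfrac12)=0$; your version first pulls back via the Wick rotation to the self-adjoint harmonic oscillator, invokes the recessive/dominant dichotomy in the sector $|\arg y|<\pi/4$ to upgrade decay-along-a-ray to decay on $\R$, and then cites the known self-adjoint spectrum. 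These are the same idea packaged differently; yours is a bit more modular (no need to track the two terms in the $U$-asymptotic, but you must be careful that the sector argument applies --- which it does because $|\zeta/2|<\pi/4$). Where you diverge substantively is algebraic simplicity. The paper proves it by an abstract homotopy in $\zeta$: the curve $\zeta \mapsto \ee^{i\zeta}\lambda_m$ consists of isolated eigenvalues, and by Kato's perturbation theorem the algebraic multiplicity is locally constant, so it equals its value $1$ at $\zeta = 0$ (self-adjoint case). You instead rule out a Jordan block directly: pairing $(L_\zeta-\lambda_m)\psi = g_m$ with $g_m$ under the \emph{bilinear} (not sesquilinear) form and integrating by parts forces $\int g_m^2 = 0$, which you refute by rotating the contour to compute $\int g_m^2 = \ee^{-i\zeta/2}\int \ee^{-y^2}H_m^2 \neq 0$. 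This is worth noting: the paper derives its Corollary~\ref{cor:Inonzero} (which is exactly the statement $\int g_m^2 \neq 0$) \emph{as a consequence} of Lemma~\ref{lem:theta0spectrum}, whereas you prove it directly by Cauchy's theorem and then feed it in as input. Your route is shorter and more self-contained (it also supplies the Wronskian argument for geometric simplicity, which the paper leaves implicit in the parabolic-cylinder classification), while the paper's continuity argument is more robust in that it would survive if the explicit integral were inconvenient to compute. One small remark: the eigenfunction should be $G_m(\ee^{i\zeta/2}\,\cdot\,)$, as you write, rather than $G_m(\ee^{i\zeta}\,\cdot\,)$ as stated in the lemma --- a typo in the paper, since that is what their own derivation preceding the lemma produces.
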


\begin{proof}
    First, we prove algebraic simplicity by continuity. To be precise, we note that the map $\zeta \mapsto L_{\zeta} : (-\pi/2,\pi/2) \to \mathcal{B}(D(L_0);L^2)$ is continuous, and we fix any eigenvalue of $L_0$, namely, $\lambda_m = -(2m-1)$, $m\geq 1$, which is known to be algebraically simple because its geometric multiplicity is one and $L_0$ is self-adjoint. The curve $\zeta \mapsto \ee^{i\zeta} \lambda_m$ is a curve of eigenvalues of $L_{\zeta}$, with each of the eigenvalues isolated in $\sigma(L_{\zeta})$. Then~\cite[p. 212-213, Theorem 3.16]{Katobook} yields that the algebraic multiplicity of $\ee^{i\zeta}\lambda_m$ is constant along the curve.

    Second, we demonstrate that there is no other spectrum, that is, the only solutions $\phi \in L^2$ to the eigenvalue problem $\phi'' - \ee^{2i\zeta} x^2 \phi = \lambda \phi$
are trivial except when $\lambda = \ee^{i\zeta} \lambda_m$, $m \geq 1$. Due to the conjugation symmetry, it is sufficient to consider $\zeta \in [0,\pi/2)$. We appeal to ODE techniques. It will be equivalent to classify solutions to
\begin{equation}
    w''(y) - \ee^{2i\zeta} \frac{1}{4} y^2 w = \ee^{i\zeta} aw
\end{equation}
for $a\in \C$, where $\phi(x) = w(y)$, $y = \sqrt{2} x$, and $\lambda = 2 \ee^{i\zeta} a$. The solution space is the linear span of the parabolic cylinder functions $U(a,z)$, $V(a,z)$ evaluated on the line $z=\ee^{i\zeta/2} y$. We follow the conventions of~\cite[Section~12.2]{nist}, in which $U$ and $V$ are linearly independent entire solutions to $U''(z) - z^2 U(z)/4 = aU(z)$. We use the asymptotic expansions for $|z| \to \infty$ in certain sectors:
\begin{equation}
    \begin{aligned}
U(a,z) &=  \left( \frac{\ee^{-z^2/4}}{ z^{a+\frac12}} + i \frac{\sqrt{2\pi} }{\Gamma \left(a+\frac12  \right) } \ee^{-i\pi a } \ee^{z^2/4 }z^{a-\frac12 } \right) \left( 1+ O_{\delta , a}(z^{-2}) \right) \text{ for } |\mathrm{Arg}\,z| \in [\pi/4+\delta , 5\pi /4 -\delta ]\\
V(a,z) &= \sqrt{ \frac2{\pi}}     \ee^{z^2/4} z^{a-\frac12} \left( 1+ O_{\delta , a}(z^{-2}) \right) \text{ for } |\mathrm{Arg}\,z| \in [0,\pi/4-\delta ] \, ,
\end{aligned}
\end{equation}
where $\Gamma$ denotes the Gamma function, and $\delta > 0$ is arbitrarily small (see \cite[Section~12.9]{nist} and the comprehensive discussion in~\cite{temme} for details). Hence, $V$  blows up as $|z| \to \infty$ along the ray ${\rm Arg} \, z = \zeta/2$ (that is, as $x \to \infty$). Meanwhile, $U$ blows up as $|z| \to \infty$ along the ray ${\rm Arg} \, z = \zeta/2 + \pi$ (that is, as $x \to -\infty$) only when $a\ne -(m+1/2)$, $m \geq 0$, so that $1/\Gamma (a+1/2) \ne 0$. When $a = -(m+1/2)$, $m\geq 0$, $1/\Gamma (a+1/2)$ has a simple zero which cancels the blowing-up part in the asymptotic expansion for $U(a,z)$. In that setting, $U(a,z)$ is a constant multiple of a Hermite function $G_{m+1}(z/\sqrt{2})$ (see \cite[Eq. 12.7.2]{nist}). %$V$ blows up as $x \to \infty$ and cannot be relevant for decay solutions.
\end{proof}

To complete the proof of Lemma~\ref{lem:solvabilityinnerprojected}, we will appeal to the Fredholm theory. For $\lambda \in \sigma(L_{\zeta})$,~a theorem  concerning isolated eigenvalues of finite multiplicity (see~\cite[p.~239, Theorem 5.28]{Katobook}) guarantees that $\lambda - L_{\zeta}$ is Fredholm. In particular, its range is closed. We recall the following.

%With Lemma~\ref{lem_embedding}, we now demonstrate that $\mathrm{range}\, L$ is closed.

%Suppose that $\{ f_k \}_{k\geq 0}\subset {\rm range}\;L$ is such that $\| f_k -f \|\to 0$ as $k\to \infty$, and let $\Psi\in D(L)$ be such that $L\Psi_k = f_k$ for each $k\geq 0$. Since $\{ f_k \}$ is bounded in $L^2$, the decay Lemma~\ref{lem_decay} shows that $\{ \Psi_k \}$ is bounded in $D(L)$, and the embedding Lemma~\ref{lem_embedding} shows that there exists $\Psi \in L^2$ such that $\| \Psi_{k_n} - \Psi \| \to 0$ for some subsequence $k_n\to \infty$. Taking the limit in the distributional formulation of  the equation $L\Psi_{k_n}=f_{k_n}$, shows that $L\Psi = f$ in the sense of distributions, and so, since another application of the decay Lemma~\ref{lem_decay} gives that $\Psi \in D(L)$, we obtain that $f\in \mathrm{range}\,L$, as required.

\begin{lemma}\label{lem_closed_range_thm} {\rm (Banach closed range theorem \cite[Section~VII.5]{yosida})}
Let $L\colon D(L) \subset \mathcal{H} \to \mathcal{H}$ denote a closed linear operator on some Hilbert space $\mathcal{H}$ such that ${\rm range}\; L$ is closed, and let $L^*$ denote its adjoint operator, defined on $D(L^*)\subset \mathcal{H}$. Then  
\[
{\rm range} \; L = (\ker L^*)^\perp\qquad \text{ and }\qquad 
{\rm range} \; L^* = (\ker L)^\perp .
\]
\end{lemma}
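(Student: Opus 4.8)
This is the classical Banach closed range theorem in Hilbert space, so the plan is to invoke it directly from a standard reference (here, Yosida~\cite[Section~VII.5]{yosida}); below I sketch the argument one would give if reproducing it. Throughout one uses that $L$ is closed and densely defined, hence $L^*$ is well defined and closed with $L^{**}=L$, together with the single structural fact that a subspace $M\subset\mathcal H$ is closed if and only if $M=M^{\perp\perp}$.

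For the first identity I would first record the easy inclusion straight from the adjoint relation: if $y=Lx$ with $x\in D(L)$ and $z\in\ker L^*$ then $\langle y,z\rangle=\langle x,L^*z\rangle=0$, so ${\rm range}\,L\subset(\ker L^*)^\perp$. Then, unwinding the definition of $D(L^*)$, the condition $z\perp{\rm range}\,L$ is \emph{exactly} $z\in D(L^*)$ with $L^*z=0$; thus $({\rm range}\,L)^\perp=\ker L^*$. Taking orthogonal complements once more and using the hypothesis that ${\rm range}\,L$ is closed gives ${\rm range}\,L=({\rm range}\,L)^{\perp\perp}=(\ker L^*)^\perp$.

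For the second identity the same computation with the roles of $L$ and $L^*$ exchanged yields $({\rm range}\,L^*)^\perp=\ker L^{**}=\ker L$, hence $\overline{{\rm range}\,L^*}=(\ker L)^\perp$ \emph{unconditionally}; so the only genuine content is that ${\rm range}\,L^*$ is in fact already closed, and it is here that the hypothesis on ${\rm range}\,L$ enters. The plan would be: since ${\rm range}\,L$ is a closed subspace it is itself a Hilbert space, and the restriction of $L$ to $D(L)\cap(\ker L)^\perp\to{\rm range}\,L$ is a closed bijection, so the bounded inverse theorem furnishes $c>0$ with $\|Lx\|\ge c\,{\rm dist}(x,\ker L)$ for all $x\in D(L)$; dualizing this coercivity estimate via a Hahn--Banach argument on the pair $(L,L^*)$ produces $\|L^*y\|\ge c\,{\rm dist}(y,\ker L^*)$ for $y\in D(L^*)$, which forces ${\rm range}\,L^*$ to be closed. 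Combined with the unconditional identity this gives ${\rm range}\,L^*=(\ker L)^\perp$.

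The two orthogonality computations are routine bookkeeping with the definition of the adjoint; the only genuinely nontrivial step, and hence the main obstacle, is the implication ``${\rm range}\,L$ closed $\Rightarrow$ ${\rm range}\,L^*$ closed'', i.e. the dualization of the coercivity bound, which is the analytic core of the theorem. I note that in our applications $L=\lambda-L_\zeta$ is Fredholm, so ${\rm range}\,L$ is closed for free, and only the first identity ${\rm range}\,L=(\ker L^*)^\perp$ is used in the sequel; one could therefore dispense with the second identity entirely, but we keep it for completeness and refer to~\cite[Section~VII.5]{yosida} for the full proof.
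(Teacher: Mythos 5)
The paper does not give its own proof of this lemma---it is stated as a classical result with a citation to Yosida and no argument is supplied---so there is no in-paper proof to compare against. Your sketch of the standard argument is correct (the easy inclusion from the adjoint identity, $(\operatorname{range} L)^\perp = \ker L^*$ from the definition of $D(L^*)$, and double-orthogonal-complement using closedness of the range; and the coercivity/dualization step for the second identity), and you rightly observe that in the paper's application $\lambda - L_\zeta$ is Fredholm so the closed-range hypothesis is automatic and only the first identity is ever used.
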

We can now conclude the proof of Lemma~\ref{lem:solvabilityinnerprojected}.
\begin{proof}[Proof of Lemma~\ref{lem:solvabilityinnerprojected}]
Let $\zeta \in (-\pi/2,\pi/2)$ and $\lambda = - \ee^{i\zeta} (2m-1)$, $m \geq 1$, be an eigenvalue of $L_{\zeta}$. The adjoint (see~\cite[p. 167-168]{Katobook}) of $\lambda - L_{\zeta}$ is
\begin{equation}
    (\lambda - L_{\zeta})^* = \lambda^* - L_{-\zeta} \, .
\end{equation}
We apply the Banach closed range theorem with $\mathcal{H}=L^2$ to obtain
\begin{equation}
    \label{eq:cosequenceofclosedrange}
\begin{aligned}
    \mathrm{range}\, (\lambda - L_{\zeta})  &= (\mathrm{ker}\, (\lambda^* - L_{-\zeta}) )^\perp \\
    &= ({\rm span} \, G_m(\ee^{-i\zeta/2} \cdot) )^\perp \\
    &= \left\lbrace F\in L^2 \colon \int F(y) G_m(\ee^{i\zeta/2} y) \, \d y = 0 \right\rbrace =: H \, .
    \end{aligned}
\end{equation}
Let
\begin{equation}
    V \coloneqq \left\lbrace \Psi \in D(L_{\zeta}) \colon \int_{\R } \Psi(y) (G_m(\ee^{i \zeta} y))^*\, \d y =0 \right\rbrace \, ,
\end{equation}
which is a Banach space when equipped with the norm $\| \cdot \|_{D(L_{\zeta})}$. % (as a kernel of a bounded operator defined on $D(L)$)
%and the classification Lemma~\ref{lem_classification_inner} guarantees that
Then $(\lambda - L_{\zeta})|_{V} \colon V \to H$ is a bounded bijection (by the definitions~\eqref{norm_DL} and \eqref{eq:cosequenceofclosedrange}, respectively). The bounded inverse theorem guarantees that $(\lambda - L_{\zeta}|_{V})^{-1} \colon H \to V$ is bounded, as desired. The proof is completed by choosing $\zeta = \pi/4$, undoing the change of variables $\xi = x/|K_2|^{1/4}$, and applying Lemma~\ref{lem_decay} to obtain the claim about Gaussian decay. %Thus since the range $H$ of $L$ is a Banach space, we see that $L\colon V\to H$ is a continuous bijection between Banach spaces $V, H$, and so 
\end{proof}

Finally, we have an important corollary, which will be used to analyze the reduced equation in Section~\ref{sec_solv_reduced}.

\begin{corollary}
    \label{cor:Inonzero}
For all $\zeta \in (-\pi/2,\pi/2)$ and $m \geq 1$, we have
\begin{equation}
    \label{eq:integraltocalculate}
    \int G_m(\ee^{i\zeta/2} y)^2 \, \d y \neq 0 \, .
\end{equation}
\end{corollary}
\begin{proof}
    We can calculate~\eqref{eq:integraltocalculate} by shifting the integration contour to the real axis:
    \begin{equation}
        \ee^{i\zeta/2} \int_{\R} G_m(\ee^{i\zeta/2} y)^2 \, \d y = \int_{\gamma} G_m(z)^2 \, \d z = \int_{-\infty}^{+\infty} G_m(z)^2 \, \, \d z > 0 \, ,
    \end{equation}
    where $\gamma$ is the contour parameterized by $\gamma(y) = \ee^{i\zeta/2} y$, $y \in \R$.
\end{proof}

\begin{rem}
In view of Corollary~\ref{cor:Inonzero}, we could have chosen $V = ({\rm span} \, G_m(\ee^{-i\zeta/2} \cdot))^\perp$ as the domain of $\lambda - L_{\zeta}$ and maintained invertibility in Lemma~\ref{lem:solvabilityinnerprojected}. This is more natural if one wishes to expand solutions in a non-orthogonal basis $\{ G_m(\ee^{i\zeta} \cdot) \}_{m \in \N}$. We do not address completeness of such a ``basis'' here.
\end{rem}

\subsection{The outer equation}\label{sec_solv_outer}

In this section, we develop the solvability theory for the outer equation in~\eqref{in_out_problem}. We only need to find solutions to the outer equation on $\supp\,\chi_{\out}$. This fact gives us some freedom in defining the solution $\phi_{\out}$ for $|r-r_0| \lec \ell_{\out}$. Namely, it lets us replace $k$ by 
\begin{equation}\label{ktilde_def}
    \widetilde{k} \coloneqq \widetilde{\chi}_{\rm out} k + (1-\widetilde{\chi}_{\rm out} ) n^{3/2} ,
\end{equation}
where $\widetilde{\chi}_{\rm out}\coloneqq 1 - \eta_{\widetilde{\ell}_{\rm out}}$, and $\widetilde{\ell}_{\rm out}\coloneqq\ell_{\rm out}/2$, so that $\widetilde{\chi_{\rm out}}=1$ on $\supp\,\chi_{\rm out}$. Thus we can consider, instead of the second equation in~\eqref{in_out_problem}, the problem
 \eqnb\label{outer_eq}
\upsilon'' - \widetilde{k} \upsilon =f \text{ on } \R^+ \, .
\eqne
However, this fact also creates difficulties, particularly for the regime $|r-r_0| \in (\ell_{\out} , C^{-1}n^{-1/2})$, since $\re \,k<0$ for such $r$ (see \eqref{bounds_im_re_k} below); therefore, coercivity cannot come from $\re \, k$ alone, which makes the well-posedness of the outer problem non-trivial. The key is to track $\im \, k$, which changes size from $\gec n^{3/2}$ for $|r-r_0|\geq Cn^{-3/4}$, to $\gec n^2$ for $|r-r_0|\sim C^{-1}n^{-1/2}$, and finally shrinking to $O(n^{-1})$ for $|r-r_0| =O(1)$, see \eqref{bounds_im_re_k} below. %\da{To recover coercivity},
%which shows that 
We need to take into account quantitative pointwise estimates on $\re\,k$, $\im\,k$, which we achieve in Lemma~\ref{Lk} below, see also Fig.~\ref{fig_reK_vs_-imK} for a sketch.

Thanks to Lemma~\ref{Lk} below, we will be able to find a linear combination~\eqref{def_h} of the real and imaginary parts of \eqnb\label{outer_basic_est}
\int_{\R^+} \left( | \upsilon' |^2 + \widetilde{k} |\upsilon |^2 \right) = \int_{\R^+} f \overline{\upsilon}
\eqne
(which is obtained by multiplying \eqref{outer_eq} by $\overline{\upsilon}$ and integrating by parts), to obtain coercivity of the problem and an \emph{a~priori} estimate, see Lemma~\ref{lem:solvabilityouter}.

Before proceeding with the rigorous version of the above discussion, we also point out that we will need holomorphic dependence of the solution $\phi_{\out}$ on $\widehat{\omega} \in B(\widehat{D}^{-1} n^{-1/2})$, where $\widehat{D}>0$ is a large constant, determined by Lemma~\ref{lem:solvabilityprojected}. %(We will actually verify such dependence on a much larger ball, see Proposition~\ref{prop_holo}.)
This is needed for our application of Rouch\'e's theorem in solving the reduced equation~\eqref{reduced_eq}, which we discuss in Section~\ref{sec_solv_reduced}.

\begin{lemma}[Potential estimates]
    \label{Lk}
    Let $B \geq 1$ and suppose that $|\omega - \omega_{\rm app}| \leq B n^{-1/2}$. There exists $C \geq 1$ such that, for sufficiently large $n$ (depending on $B$), the potential $k$ satisfies the estimates
\eqnb
\label{bounds_im_re_k}
\begin{split}  \im \,k(r) & \begin{cases}
\gec n^{3/2} \hspace{3cm} & |r-r_0 | \in  [C n^{-3/4} , C^{-1} n^{-1/2}],\\
\sim   \frac{n^3(r-r_0)^2}{(1+n^2(r-r_0)^4)^2}  &|r-r_0 | \in  [ C^{-1} n^{-1/2}, C^{-1} ] ,\\
= p O(1)  &|r-r_0 | \geq  C^{-1},
\end{cases}\\
\re \,k(r) & \begin{cases}
= O( n^{3/2}) \hspace{3cm} & |r-r_0 | \in  [C n^{-3/4} , C^{-1} n^{-1/2}],\\
\gec   n^2 & |r-r_0 | \in  [ C^{-1} n^{-1/2}, C^{-1} ] ,\\
\gec pn^2  & |r-r_0 | \geq  C^{-1},
\end{cases}
\end{split}
\eqne
see Fig.~\ref{fig_reK_vs_-imK} for a sketch. In particular, there exists $A>0$ such that
\eqnb\label{def_h}
h(r) \coloneqq \re\,k(r) + A\,\im\,k(r) \gec p n^{3/2} \qquad \text{ for all }|r-r_0| \geq Cn^{-3/4}.
\eqne
\end{lemma}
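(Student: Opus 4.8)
The plan is to expand the coefficients of the potential~\eqref{def_of_k} around $r=r_0$ and in powers of $n^{-1}$ (as in Appendix~\ref{app_exp_of_k}), and then treat the three $r$-ranges appearing in~\eqref{bounds_im_re_k} separately; the innermost range is where all the difficulty lies.

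\emph{Set-up.} I would first introduce $\gamma_r\coloneqq n(\Lambda(r)-\Lambda(r_0))-(\omega-\omega_{\rm app})$, so that, since $\omega_{\rm app}=n\Lambda(r_0)+ib_0^{1/2}$, one has $\gamma=\gamma_r-ib_0^{1/2}$ with $\re\gamma_r=n(\Lambda(r)-\Lambda(r_0))-\re(\omega-\omega_{\rm app})$ and $|\im\gamma_r|=|\im(\omega-\omega_{\rm app})|\le Bn^{-1/2}$. By \textbf{Assumption~\ref{ass}} we have $\Lambda'(r_0)=0$, $\Lambda''(r_0)>0$, $b'(r_0)=0$, $b_0>0$, so Taylor expansion near $r_0$ gives $n(\Lambda(r)-\Lambda(r_0))=\tfrac n2\Lambda''(r_0)(r-r_0)^2(1+O(r-r_0))$, $b(r)=b_0+O((r-r_0)^2)$, $p(r)\sim1$, $a,d=O(1)$. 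Choosing $C$ large (depending on $B$ and the vortex column) makes $\re\gamma_r\gec n(r-r_0)^2\gec C^2n^{-1/2}>0$ throughout $Cn^{-3/4}\le|r-r_0|\le C^{-1}$; this is exactly where the largeness of $C$ is used, ensuring the positive quadratic term dominates the $O(n^{-1/2})$ uncertainty coming from $\omega$. Away from the core, for $|r-r_0|\ge C^{-1}$, the injectivity clause of \textbf{Assumption~\ref{ass}} together with $\Lambda^{(k)}(r)\to0$ force $|\Lambda(r)-\Lambda(r_0)|\gec1$, hence $|\gamma|\gec n$.

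\emph{An algebraic identity.} In $k=pn^2\bigl(1+\tfrac a{n\gamma}+\tfrac b{\gamma^2}+\tfrac d{n^2}\bigr)$ the bracket is governed by $1+\tfrac b{\gamma^2}=\tfrac{\gamma^2+b}{\gamma^2}$, and $\gamma=\gamma_r-ib_0^{1/2}$ gives $\gamma^2+b=\gamma_r(\gamma_r-2ib_0^{1/2})+(b-b_0)$ and $\gamma^2=\gamma_r(\gamma_r-2ib_0^{1/2})-b_0$. Treating $\gamma_r$ as real to leading order (its imaginary part is $O(n^{-1/2})$), writing $g\coloneqq\re\gamma_r$, rationalising, and using the elementary identity $(g^2-b_0)^2+4b_0g^2=(g^2+b_0)^2$, I would obtain
\[
\im\Bigl(1+\tfrac b{\gamma^2}\Bigr)=\frac{2b_0^{1/2}bg}{(g^2+b_0)^2}+O(n^{-1/2})+O((r-r_0)^2),\qquad
\re\Bigl(1+\tfrac b{\gamma^2}\Bigr)=\frac{g^2(g^2+3b_0)}{(g^2+b_0)^2}+O(n^{-1/2})+O((r-r_0)^2).
\]
The key structural feature is that $g=\re\gamma_r>0$ in the range under consideration (because $\Lambda''(r_0)>0$), so the imaginary part has a definite positive size, whereas the real part --- which is at most $1$ up to corrections --- may be as small as $O(g^2/b_0)$, or even slightly negative once the $O(n^{-1/2})$ and $O((r-r_0)^2)$ corrections are kept. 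Substituting $g\approx\tfrac n2\Lambda''(r_0)(r-r_0)^2$ converts these into the pointwise bounds of~\eqref{bounds_im_re_k}: for $|r-r_0|\ge C^{-1}$ one has $|\gamma|\gec n$, so $\tfrac a{n\gamma},\tfrac b{\gamma^2}=O(n^{-2})$ and $\re k\gec pn^2$, $\im k=pO(1)$; for $C^{-1}n^{-1/2}\le|r-r_0|\le C^{-1}$ one has $g\gec C^{-2}$, so either $g^2\ge b_0$ and $\re(1+\tfrac b{\gamma^2})=1+\tfrac{b_0(g^2-b_0)}{(g^2+b_0)^2}\ge1$, or $g^2<b_0$ and $\re(1+\tfrac b{\gamma^2})\sim g^2/b_0\gec C^{-4}$, so $\re k\gec pn^2$, while $\im k\sim\tfrac{b_0^{1/2}b_0g}{(g^2+b_0)^2}pn^2\sim\tfrac{pn^3(r-r_0)^2}{(1+n^2(r-r_0)^4)^2}>0$; for $Cn^{-3/4}\le|r-r_0|\le C^{-1}n^{-1/2}$ one has $g^2\ll b_0$, so the denominators are $\sim1$ and $\im k\sim pn^2g\sim pn^3(r-r_0)^2\gec pn^{3/2}\gec n^{3/2}$, while $\re(1+\tfrac b{\gamma^2})=O(g^2/b_0)+O(n^{-1/2})$ so $\re k$ is either nonnegative or of size $O(pn^{3/2})$, and in all cases $|\re k|\lec C^{-2}\im k$.

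\emph{Assembling $h$, and the main obstacle.} I would then set $h=\re k+A\,\im k$. On the outer two ranges $\re k\gec pn^2$ while $\im k>0$ (intermediate) or $\im k=pO(1)$ (far), so $h\gec pn^2\gec pn^{3/2}$ for any fixed $A$ and $n$ large. On the innermost range, if $\re k\ge0$ then $h\ge A\,\im k\gec n^{3/2}$, while if $\re k<0$ then $|\re k|\lec pn^{3/2}$ and $\im k\gec pn^2g\gec C^2pn^{3/2}$, so $h\ge(AC^2-O(1))pn^{3/2}\gec pn^{3/2}$ once $A$ is chosen large enough; fixing this $A$ gives $h\gec pn^{3/2}$ for all $|r-r_0|\ge Cn^{-3/4}$. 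The hard part is precisely the innermost range: there $\re k$ supplies no coercivity (it may vanish or be negative), and one must exploit that $\im k$ is positive of definite size $\gec n^{3/2}$; this in turn rests on the sign $\Lambda''(r_0)>0$ (which forces $\re\gamma_r>0$ and hence the sign of the cross term $\gamma_r\cdot b_0^{1/2}$ in $\gamma^2+b$) and on choosing $C$ large relative to $B$ so that the quadratic $n(r-r_0)^2\gec C^2n^{-1/2}$ dominates the $O(n^{-1/2})$ error in $\omega$. The remaining routine point is controlling the $O(n^{-1/2})$ and $O((r-r_0)^2)$ corrections above precisely enough, which is most conveniently packaged via the careful expansion of $\gamma$, $b$ and $k_{\rm err}$ in Appendix~\ref{app_exp_of_k} and~\eqref{Kerr_est}.
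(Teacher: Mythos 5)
Your proposal is correct and reaches the same conclusion along essentially the same lines as the paper, but via a slightly different packaging that is worth noting. The paper treats the ranges $|r-r_0|\in[Cn^{-3/4},C^{-1}n^{-1/2}]$ and $[C^{-1}n^{-1/2},C^{-1}]$ with two \emph{separate} devices: a power-series expansion $(1+x)^{-2}=1-2x+O(x^2)$ for the inner range, and a combination of the bound $|b/\gamma^2|\le 1-\delta$ with the explicit formula $\im[(a+ib)^{-2}]=-2ab/(a^2+b^2)^2$ for the intermediate range. You instead rationalise once, writing $\gamma=\gamma_r-ib_0^{1/2}$, and use the exact identity $(g^2-b_0)^2+4b_0g^2=(g^2+b_0)^2$ to obtain a single closed formula for $\re$ and $\im$ of $\gamma^2+b\over\gamma^2$, valid uniformly over both ranges. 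This is cleaner, and it makes the key structural feature — that the sign of $\im k$ is inherited from the sign of $g=\re\gamma_r$, which in turn is forced positive by $\Lambda''(r_0)>0$ once $C$ is large relative to $B$ — very transparent. The final assembly of $h$ (distinguishing $\re k\ge 0$ and $\re k<0$, choosing $A$ large last) is also fine. One small imprecision: your error term should be recorded as $O\bigl((r-r_0)^2/(g^2+b_0)\bigr)$ rather than $O((r-r_0)^2)$ — the correction coming from $b-b_0$ sits over $\gamma^2$, so it carries the denominator $(g^2+b_0)$. As written, a naïve bound $(r-r_0)^2\le C^{-2}$ would appear to overwhelm the lower bound $\gec C^{-4}$ on the leading term in the intermediate range, whereas the sharper form shows the error is really $O(C^2/n)$ there; this is exactly the bookkeeping the paper carries out when it writes the fraction $\tfrac{O(|r-r_0|^2)}{n^2|r-r_0|^4}\lesssim 1/n$. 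Easily repaired, but worth being explicit about.
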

\begin{center}
\includegraphics[width=0.8\textwidth]{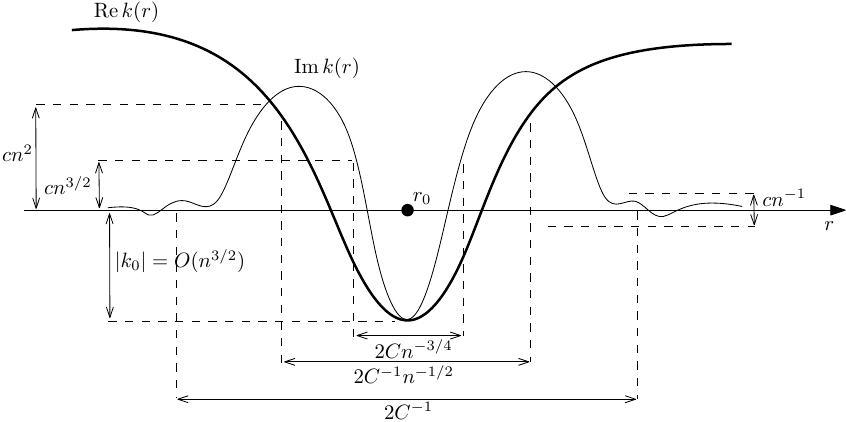}
  \captionof{figure}{The sketch of pointwise estimates on $\re\,k(r)$, $\im\,k(r)$ for $r \sim r_0$.}\label{fig_reK_vs_-imK} 
\end{center}
\begin{proof}
First, since there is no $r_1 \neq r_0$ satisfying $\Lambda(r_1) = \Lambda(r_0)$ (recall \textbf{Assumption~\ref{ass}}), we have that whenever $|r-r_0| \geq C^{-1}$, $|\gamma| \gtrsim n$, so
\[
|k-pn^2| \lec n^{-2} \, ,
\]
with implicit constant depending on $C$. For the remaining range $|r-r_0|\in [Cn^{-3/4}, C^{-1} ]$, we will use the Taylor expansion of $\gamma$ at $r_0$ to obtain
\eqnb\label{temp01}
\gamma (r) = \frac12 n \Lambda''(r_0) (r-r_0)^2 + n O(|r-r_0|^3)  - i b_0^{1/2} - O(n^{-1/2}).
\eqne

If $|r-r_0|\in [Cn^{-3/4},C^{-1}n^{-1/2}]$, then $-ib_0^{1/2}$ is the dominant term in the above expansion, and so, using the expansion $(1+x)^{-2}=1-2x + O(x^2)$ for $|x|<1$, we obtain
\[
\frac{1}{\gamma^2} = \frac{1}{b_0} \left( -1 + i \frac{\Lambda''(r_0)}{b_0^{1/2}} n (r-r_0)^2  + n O (|r-r_0|^3 + n^{-3/2}+n|r-r_0|^4)\right) ,
\]
provided $C$ is chosen sufficiently large. Thus, using the expansion $b=b_0 + O(|r-r_0|^2)$,
\[
\frac{b}{\gamma^2} = -1 + i \frac{\Lambda''(r_0)}{b_0^{1/2}} n (r-r_0)^2  + n O (|r-r_0|^3 + n^{-3/2}+n|r-r_0|^4+ n^{-1}|r-r_0|^2) \, .
\]
Hence
\[
\re\,k = pn^2 \left( 1+ \frac{b}{\gamma^2} + O(n^{-1}) \right) = O( n^{3/2}) \, ,
\]
while
\[
\im\, k = pn^2 \left( \frac{\Lambda''(r_0)}{b_0^{1/2}}n(r-r_0)^2 +   O(n^{-1/2}) \right) \gec n^{3/2} , 
\]
so that the first term is dominant, given $C$ is sufficiently large. 

If $|r-r_0|\in [C^{-1}n^{-1/2} , C^{-1}]$, then we use \eqref{temp01} and $b=b_0 + O(|r-r_0|^2)$ to obtain
\[
\left| \frac{b}{\gamma^2} \right| \leq \frac{|b|}{b_0+ \left( \frac{\Lambda''(r_0)}4 n(r-r_0)^2 \right)^2}  \leq \frac{b_0}{b_0+ \left( \frac{\Lambda''(r_0)}4 n(r-r_0)^2 \right)^2} + \underbrace{\frac{O(|r-r_0|^2)}{n^2 |r-r_0|^4}}_{\lesssim 1/n} \leq 1-\delta  %\frac{O(1)}{n} 
\]
for some $\delta >0$, and so 
\[
\re\,k \geq pn^2 \left(1- \left| \frac{b}{\gamma^2} \right| - O(n^{-1}) \right) \gec n^2.
\]
On the other hand, we use the fact that $\im [(a+ib)^{-2}] = -2ab/(a^2+b^2)^2$ to write \eqnb\label{temp011}
\begin{split}
\im (\gamma^{-2}) &= \frac{-2\left[\frac{\Lambda''(r_0)}{2} n (r-r_0)^2 + n O(|r-r_0|^3) + O(n^{-1/2})\right]\left[- b_0^{1/2} + O(n^{-1/2}) \right] }{\left[\left(\frac{\Lambda''(r_0)}{2} n (r-r_0)^2 + n O(|r-r_0|^3) + O(n^{-1/2}) \right)^2 + \left( b_0^{1/2} + O(n^{-1/2}) \right)^2\right]^2 } \\
&\sim \frac{n(r-r_0)^2}{(1+n^2 (r-r_0)^4 )^2} >0,
\end{split}
\eqne
as required. 
\end{proof}

Since $\ell_{\rm out}\geq C n^{-3/4}$, as in~\eqref{claim_choices_of_scales}, the estimates in Lemma~\ref{Lk} are valid on $\supp\,\chi_{\rm out}$, that is, where the outer equation is required to be satisfied. As mentioned above, we modify $k$ outside of $\supp\, \chi_{\rm out}$ according to \eqref{ktilde_def}, and, analogously to \eqref{def_h}, we obtain
\eqnb\label{outer_h_tilde}
\widetilde{h}(r) \coloneqq \re\,\widetilde{k}(r) + A\,\im\,\widetilde{k}(r) \gec p n^{3/2} \qquad \text{ for all }r>0.
\eqne
We can thus take a similar linear combination of the real and imaginary parts of the \emph{a~priori} estimate \eqref{outer_basic_est} to obtain unique solvability of the outer equation \eqref{outer_eq} in $Z$, defined in~\eqref{def_of_Z}. Namely, we proved the following. 

\begin{lemma}[Solvability of outer equation]
    \label{lem:solvabilityouter}
    Let $B \geq 1$. Suppose that $|\omega - \omega_{\rm app}| \leq B n^{-1/2}$ and $n$ is sufficiently large depending on $B$.
    For every $f\in L^2 (\R^+)$, there exists a unique solution $\upsilon \in H^1_0 (\R^+)$ of
\eqref{outer_eq},
where $\widetilde{k}$ is defined in \eqref{ktilde_def}. Moreover, 
\eqnb\label{outer_sol_norm}
n^{3/8} \| \upsilon \|_Z = \| \upsilon' \|_{L^2} + n^{3/4} \| \upsilon \|_{L^2} \lec n^{-3/4} \| f \|_{L^2} \, .
\eqne
\end{lemma}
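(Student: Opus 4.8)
The plan is to reduce everything to a single \emph{a priori} estimate, obtained by taking the linear combination $(\text{real part}) + A\,(\text{imaginary part})$ of the energy identity~\eqref{outer_basic_est} with the constant $A$ furnished by Lemma~\ref{Lk}; uniqueness and the quantitative bound~\eqref{outer_sol_norm} will then be immediate, and existence will follow from the Lax--Milgram theorem applied to a complex-rotated version of the same bilinear form. First I would record that any weak solution $\upsilon \in H^1_0(\R^+)$ of~\eqref{outer_eq} satisfies~\eqref{outer_basic_est} upon testing against $\overline{\upsilon}$ and integrating by parts, so that splitting into real and imaginary parts and combining yields
\begin{equation}
\int_{\R^+} \bigl( |\upsilon'|^2 + \widetilde h \, |\upsilon|^2 \bigr) \;=\; -\,\Re \int_{\R^+} f\overline{\upsilon} \;-\; A\,\Im \int_{\R^+} f\overline{\upsilon} \, .
\end{equation}
Invoking~\eqref{outer_h_tilde} together with the elementary bound $p(r) = (1+\beta^2 r^2)/r^2 \ge \beta^2$, the left-hand side dominates $\|\upsilon'\|_{L^2}^2 + n^{3/2}\|\upsilon\|_{L^2}^2$, while the right-hand side is at most $(1+A)\|f\|_{L^2}\|\upsilon\|_{L^2}$; a Young inequality absorbing $\tfrac12 n^{3/2}\|\upsilon\|_{L^2}^2$ into the left then gives $\|\upsilon'\|_{L^2}^2 + n^{3/2}\|\upsilon\|_{L^2}^2 \lesssim n^{-3/2}\|f\|_{L^2}^2$, which is exactly~\eqref{outer_sol_norm} after recalling the definition~\eqref{def_of_Z} of $\|\cdot\|_Z$. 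Taking $f = 0$ yields uniqueness.

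For existence I would set up the bounded sesquilinear form $\widetilde a(\upsilon,\psi) \coloneqq \int_{\R^+} ( \upsilon'\overline{\psi'} + \widetilde k\,\upsilon\overline{\psi} )$ on $H^1_0(\R^+)$; the only point requiring care in checking its boundedness is the singularity $\widetilde k \sim (n^2 - \tfrac14)r^{-2}$ as $r \to 0^+$ (where $\widetilde k$ coincides with $k$), which is absorbed by Hardy's inequality $\int_0^\infty |\upsilon|^2 r^{-2} \lesssim \|\upsilon'\|_{L^2}^2$ on $H^1_0(\R^+)$ — the resulting $n$-dependent bound being immaterial for Lax--Milgram. Choosing $\theta = \arctan A \in (0,\pi/2)$, so that $\Re(\ee^{-i\theta}\widetilde k) = (1+A^2)^{-1/2}\widetilde h$ and $\Re\,\ee^{-i\theta} = \cos\theta > 0$, the rotated form satisfies $\Re\bigl( \ee^{-i\theta}\widetilde a(\upsilon,\upsilon) \bigr) \gtrsim \|\upsilon'\|_{L^2}^2 + n^{3/2}\|\upsilon\|_{L^2}^2$ by~\eqref{outer_h_tilde}, hence is coercive on $H^1_0(\R^+)$. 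Lax--Milgram then supplies a unique $\upsilon \in H^1_0(\R^+)$ with $\widetilde a(\upsilon,\psi) = -\int_{\R^+} f\overline{\psi}$ for all $\psi$, i.e. a weak solution of~\eqref{outer_eq} (smooth on $(0,\infty)$ since $\widetilde k$ is), and the bound~\eqref{outer_sol_norm} for it is precisely the \emph{a priori} estimate above — equivalently, take $\psi = \upsilon$ in the weak formulation. A method-of-continuity argument interpolating between $\widetilde k$ and the coercive constant potential $n^{3/2}$ would serve equally well.

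The step I expect to be the real obstacle is the one Lemma~\ref{Lk} is tailored to handle: the outer operator $\tfrac{\d^2}{\d r^2} - \widetilde k$ is \emph{not} coercive through $\Re\,\widetilde k$ alone, since $\Re\,\widetilde k < 0$ on the annulus $|r - r_0| \in (\ell_{\rm out}, C^{-1}n^{-1/2})$. The resolution is that $\Im\,\widetilde k$ is comparatively large precisely on that annulus, so the weighted combination $\widetilde h = \Re\,\widetilde k + A\,\Im\,\widetilde k \gtrsim p n^{3/2}$ remains positive throughout $\R^+$; and crucially the rotation angle $\theta = \arctan A$ stays in $(0,\pi/2)$, so the Dirichlet term $\cos\theta\,\|\upsilon'\|_{L^2}^2$ survives with a favourable sign. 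Once~\eqref{outer_h_tilde} is granted the remaining steps are routine, the only housekeeping being the Hardy-inequality treatment of the $r \to 0^+$ singularity of $\widetilde k$.
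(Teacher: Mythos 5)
Your proof is correct and takes essentially the same approach as the paper: coercivity comes from the linear combination $\widetilde h = \Re\,\widetilde k + A\,\Im\,\widetilde k \gtrsim p\,n^{3/2}$ furnished by Lemma~\ref{Lk}, and the resulting a priori estimate (after using $p \geq \beta^2$) gives both~\eqref{outer_sol_norm} and uniqueness. The paper leaves existence implicit with the phrase ``unique solvability''; your Lax--Milgram-with-rotation argument, together with the Hardy-inequality treatment of the $O(n^2 r^{-2})$ singularity of $\widetilde k$ at $r \to 0^+$, is the natural and correct way to supply that step.
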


We conclude this section with a verification that the solution $\upsilon$ to \eqref{outer_eq} is holomorphic with respect to  $\widehat{\omega}$. To this end, we extract from $\widetilde{k}$ the main part $\widetilde{k}_{\app}$, which is independent of $\widehat{\omega}$; namely, we define $\widetilde{k}_{\app}$ by \eqref{ktilde_def}, except that $\omega$ (in the definition \eqref{def_of_k} of $k$) is replaced by $\omega-\widehat{\omega}=n\Lambda_0 + i b_0^{1/2} + \mu_m$ (recall \eqref{choice_of_tilde_om_m_copy}). We can thus  rewrite \eqref{outer_eq} as
\eqnb\label{outer_eq_rewrite}
-\upsilon''+ \widetilde{k}_{\app} \upsilon = f + \left( \widetilde{k}_{\app} - \widetilde{k} \right) \upsilon.
\eqne 
The point of this is that the only dependence on $\widehat{\omega }$ occurs in the last term, which can be made small (and, hence, be absorbed) and is also holomorphic with respect to $\widehat{\omega}$, as it is a multiplication operator of $\upsilon$ by a function $\widetilde{k}_{\app} - \widetilde{k}$ holomorphic in the norm $\| \cdot \|_{L^\infty}$.\footnote{To see this, one utilizes that both $a,b$ (in the potential $k$, recall \eqref{def_of_k}) are $O(r^2)$ as $r\to 0$, which combats the apparent $O(r^{-2})$ singularity in the prefactor $p(r)$.}  The key estimate is
\begin{equation}
\|\widetilde{k}_{\rm app} - \widetilde{k} \|_{L^\infty} \leq C^{-1} n^{3/2} \, ,
\end{equation}
where $C^{-1}$ can be made arbitrarily small provided that $\widehat{\omega} \ll n^{-1/2}$ and $n$ is sufficiently large. Define the multiplication operator $K\colon Z \to L^2$, $K\upsilon \coloneqq \left( \widetilde{k}_{\app} - \widetilde{k} \right) \upsilon$, and $S\colon L^2 \to Z$ the solution operator to $-\upsilon''+\widetilde{k}_{\app} \upsilon =g$ (i.e., $Sg=\upsilon$). Then we see that any solution $\upsilon$ of \eqref{outer_eq} satisfies 
\[
\upsilon = S (f + K\upsilon ) \, ,
\] 
i.e., $\upsilon = (I-SK)^{-1} Sf$, where the invertibility of $I-SK$ follows from the facts that $\|S \|_{L^2\to N}\lesssim n^{-3/4}$ by \eqref{outer_sol_norm} and $n^{-3/4}\| K\|_{Z \to L^2} \leq C^{-1}$. This verifies that the solution in Lemma~\ref{lem:solvabilityouter} is  holomorphic with respect to  $\hat{\omega} \in D(0,\varepsilon n^{-1/2})$ for sufficiently small $\varepsilon$ and sufficiently large $n$.

\subsection{The projected system}\label{sec_solv_proj}
The projected system consists of the projected inner equation~\eqref{inner_Psi_projected}, the outer equation~\eqref{outer_eq}, and the supplementary condition~\eqref{eq:Psibarcond}:
\eqnb\label{in_out_problem_again}
\begin{cases}
&\Psi'' - (K_0 +K_2 \xi^2 )  \Psi =Q\left( \widetilde{X}_{\rm in}K_{\rm err} (W + \Psi  ) - R\phi_{\rm out} X_{\rm out}''  - 2 n^{-3/4}R\phi_{\rm out}' X_{\rm out}' \right), \\
&\int \Psi W^*= 0 ,\\
&\phi_{\rm out}'' - \widetilde{k} \phi_{\rm out} = - 2\phi_{\rm in}'\chi_{\rm in}' - \phi_{\rm in}\chi_{\rm in}'' \, . \\
%& \int_{\R }  \left( \widetilde{X}_{\rm in}K_{\rm err} (W + \Psi ) - R\phi_{\rm out} X_{\rm out}''  - n^{-3/4}R\phi_{\rm out}' X_{\rm out}' \right) W =0,
\end{cases}
\eqne
%where $Q$ is defined in~\eqref{def_of_Q}. 

\begin{lemma}[Solvability of projected system]
    \label{lem:solvabilityprojected}
There exist $D_{\rm out},\hat{D} \geq 1$ and $D_{\rm in} \geq 2D_{\rm out}$ such that for all $\widehat{ \omega} \in B(\hat{D}^{-1} n^{-1/2})$ and $n$ sufficiently large, the projected system~\eqref{in_out_problem_again} is uniquely solvable in the class $(\Psi,\phi_{\rm out})  \in Y_w \times Z$ with the choices of cut-off scales
\begin{equation}
    \label{eq:choicesofscales}
\ell_{\rm out} = D_{\rm out} n^{-3/4} \, , \quad \ell_{\rm in} \in [D_{\rm in} n^{-3/4}, D_{\rm in}^{-1} n^{-5/8}] \, .
\end{equation}
The solution satisfies
\begin{equation}
    \label{eq:estimatepsi}
    \| \Psi \|_{Y_w} \lesssim n^{1/2} |\hat{\omega}| + n^{-1/2} + (\ell_{\rm in} n^{3/4})^4 n^{-1/2} + (\ell_{\rm in} n^{3/4})^{-1} \exp\left( - C^{-1} (\ell_{\rm in} n^{3/4})^2 \right),
\end{equation}
\begin{equation}
    \label{eq:estimatephiout}
    \| \phi_{\rm out} \|_{Z} \lesssim (\ell_{\rm in} n^{3/4})^{-1} \exp\left( - C^{-1} (\ell_{\rm in} n^{3/4})^2 \right)\hspace{6.2cm}
\end{equation}
 and is  holomorphic with respect to  $\hat{\omega}$ in the above norms.
\end{lemma}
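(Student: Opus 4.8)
The plan is to realize the coupled projected system~\eqref{in_out_problem_again} as a fixed point of a contraction on $Y_w\times Z$, built from the two linear solvability results already in hand. Write $\mathcal{S}_{\rm in}\colon L^2_w\to Y_w$ for the solution operator $F\mapsto\Upsilon$ of the projected inner problem~\eqref{model_inner_prob_withP} (Lemma~\ref{lem:solvabilityinnerprojected}, in particular the Gaussian-weighted bound~\eqref{gaussian_decay_lemma}), and $\mathcal{S}_{\rm out}\colon L^2\to Z$ for the solution operator of the outer problem~\eqref{outer_eq} (Lemma~\ref{lem:solvabilityouter}, which after using~\eqref{def_of_Z} reads $\|\mathcal{S}_{\rm out}f\|_Z\lesssim n^{-9/8}\|f\|_{L^2}$). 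I would define $\mathcal{T}(\Psi,\phi_{\rm out})\coloneqq(\widetilde\Psi,\widetilde\phi_{\rm out})$ by
\[
\widetilde\Psi\coloneqq\mathcal{S}_{\rm in}\Big[Q\big(\widetilde X_{\rm in}K_{\rm err}(W+\Psi)+\Phi_{\rm out}X_{\rm out}''-2(\Phi_{\rm out}X_{\rm out}')'\big)\Big],\qquad \widetilde\phi_{\rm out}\coloneqq\mathcal{S}_{\rm out}\big[-2\phi_{\rm in}'\chi_{\rm in}'-\phi_{\rm in}\chi_{\rm in}''\big],
\]
with $\Phi_{\rm out}=R\phi_{\rm out}$ and $\phi_{\rm in}=R^{-1}(W+\Psi)$, so that fixed points of $\mathcal{T}$ are precisely solutions of~\eqref{in_out_problem_again}. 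I would first note that $Q$ from~\eqref{def_of_Q} is bounded on $L^2_w$ (since $q_0=1/2<1$, so $\overline W$ decays faster than the weight grows), hence $\mathcal{S}_{\rm in}\circ Q$ and $\mathcal{S}_{\rm out}$ are bounded with $n$-independent norms; and second that $\widetilde\phi_{\rm out}$ does not depend on $\phi_{\rm out}$, i.e.\ $\mathcal{T}$ is ``triangular'', which will be convenient for closing the contraction.

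The heart of the proof is a package of quantitative estimates on the couplings and source terms; write $L\coloneqq\ell_{\rm in}n^{3/4}\in[D_{\rm in},D_{\rm in}^{-1}n^{1/8}]$. \emph{(i) The $K_{\rm err}$ terms.} Using~\eqref{Kerr_est} (in its precise form from the appendix), on $\supp\widetilde X_{\rm in}$ --- where $|r-r_0|\le2\ell_{\rm in}\ll n^{-1/2}$ and $|\xi|\le2L$ --- one has $|K_{\rm err}(\xi)|\lesssim n^{1/2}|\widehat\omega|+n^{-1/2}(1+\xi^4)$; this gives the source bound $\|\widetilde X_{\rm in}K_{\rm err}W\|_{L^2_w}\lesssim n^{1/2}|\widehat\omega|+(1+L^4)n^{-1/2}$ and the multiplier bound $\|\widetilde X_{\rm in}K_{\rm err}\,\cdot\,\|_{Y_w\to L^2_w}\lesssim\widehat D^{-1}+n^{-1/2+2\delta}$ (using $\widetilde X_{\rm in}\xi^4\le4L^2\xi^2$ on the support, $\|\xi^2\Psi\|_{L^2_w}\le\|\Psi\|_{Y_w}$, and $|\widehat\omega|\le\widehat D^{-1}n^{-1/2}$), which is small for $\widehat D$ large, $\delta\le1/8$, $n$ large. \emph{(ii) Inner-to-outer coupling.} Since $\chi_{\rm in}',\chi_{\rm in}''$ are supported where $|\xi|\in[L,2L]$, there $W,W'$ are $O(L^{O(1)}\ee^{-cL^2})$ (Hermite $\times$ Gaussian) and $\Psi,\Psi'$ are $O(\ee^{-cL^2}\|\Psi\|_{Y_w})$ (by a weighted Sobolev embedding of $Y_w$); accounting for $\|\chi_{\rm in}^{(j)}\|_\infty\sim\ell_{\rm in}^{-j}$ and the $O(\ell_{\rm in})$ measure of the support yields $\|\mathcal{S}_{\rm out}[-2\phi_{\rm in}'\chi_{\rm in}'-\phi_{\rm in}\chi_{\rm in}'']\|_Z\lesssim L^{-1}\ee^{-C^{-1}L^2}(1+\|\Psi\|_{Y_w})$, after enlarging $C$ to absorb the polynomial prefactor. \emph{(iii) Outer-to-inner coupling.} Since $X_{\rm out}',X_{\rm out}''$ are supported on the bounded set $|\xi|\in[D_{\rm out},2D_{\rm out}]$, on which the weight is at most a constant depending only on $D_{\rm out}$, with $\|X_{\rm out}^{(j)}\|_\infty\sim D_{\rm out}^{-j}$ and $\|\Phi_{\rm out}\|_{L^2},\,n^{-3/4}\|\Phi_{\rm out}'\|_{L^2}\lesssim\|\phi_{\rm out}\|_Z$ (by~\eqref{norm_of_R} and~\eqref{def_of_Z}), we get $\|\Phi_{\rm out}X_{\rm out}''-2(\Phi_{\rm out}X_{\rm out}')'\|_{L^2_w}\lesssim C(D_{\rm out})\|\phi_{\rm out}\|_Z$.

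With (i)--(iii), the linearization of $\mathcal{T}$ obeys a Lipschitz matrix of the schematic form $\big(\begin{smallmatrix}\varepsilon_1&\varepsilon_2\\\varepsilon_3&0\end{smallmatrix}\big)$, with $\varepsilon_1\lesssim\widehat D^{-1}+n^{-1/2+2\delta}$, $\varepsilon_2\lesssim C(D_{\rm out})$, $\varepsilon_3\lesssim L^{-1/2}\ee^{-cL^2}$, $L\ge D_{\rm in}$. Measuring $Y_w\times Z$ by $\|\Psi\|_{Y_w}+\Theta\|\phi_{\rm out}\|_Z$ with $\ee^{cD_{\rm in}^2/2}\gg\Theta\gg1$ turns the off-diagonals into $\Theta^{-1}\varepsilon_2,\Theta\varepsilon_3$, both arbitrarily small; so I would choose, in order, $\widehat D$ large (shrinking $\varepsilon_1$), then $D_{\rm out}$, then $D_{\rm in}\ge2D_{\rm out}$ large enough that $C(D_{\rm out})\ee^{-cD_{\rm in}^2}\ll1$ (which, since $L\ge D_{\rm in}$, controls the couplings for \emph{every} admissible $\ell_{\rm in}$), and finally $n$ large; then $\mathcal{T}$ is a contraction and the Banach fixed point theorem gives a unique $(\Psi,\phi_{\rm out})\in Y_w\times Z$. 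Uniqueness in the stated class follows since for any solution $\phi_{\rm out}$ is determined by $\Psi$ through Lemma~\ref{lem:solvabilityouter} and then $\Psi$ is the unique fixed point. The a priori bounds come from evaluating at the origin: $\|\Psi\|_{Y_w}\lesssim\|\widetilde X_{\rm in}K_{\rm err}W\|_{L^2_w}+C(D_{\rm out})\|\mathcal{S}_{\rm out}[-2(R^{-1}W)'\chi_{\rm in}'-(R^{-1}W)\chi_{\rm in}'']\|_Z$, which is~\eqref{eq:estimatepsi}, and reinserting this into (ii) gives~\eqref{eq:estimatephiout}. Finally I would record holomorphy: $K_{\rm err}$ is affine in $\widehat\omega$, $\mathcal{S}_{\rm out}$ depends holomorphically on $\widehat\omega$ (Section~\ref{sec_solv_outer}), and $\mathcal{S}_{\rm in}\circ Q$ is $\widehat\omega$-independent, so the fixed point is holomorphic in $\widehat\omega\in B(\widehat D^{-1}n^{-1/2})$, as needed for Rouch\'e's theorem in Section~\ref{sec_solv_reduced}.

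The hardest part will be exactly the estimates (i)--(iii) together with the scale selection: one must check that the two cross-couplings are genuinely subleading --- the inner-to-outer one because $W$ and $\Psi$ are exponentially small at the inner cutoff $|\xi|\sim L$, the outer-to-inner one because the Gaussian weight is only a fixed constant on the bounded overlap $|\xi|\sim D_{\rm out}$ --- uniformly over the whole range $\ell_{\rm in}\in[D_{\rm in}n^{-3/4},D_{\rm in}^{-1}n^{-5/8}]$, while simultaneously $K_{\rm err}$, tracked through the rescaling $R$ (of norm $n^{3/8}$) via~\eqref{Kerr_est}, remains a small multiplier on $Y_w$ despite its $\xi^4$ growth. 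Carefully bookkeeping powers of $n$ across $R$ and the three nested cutoff scales $\ell_{\rm out},\ell_{\rm in},\widetilde{\ell_{\rm in}}$ is the main technical burden.
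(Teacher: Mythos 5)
Your proposal is correct and follows essentially the same route as the paper: you set up the same contraction map on $Y_w\times Z$ (your $\mathcal{T}$ is the paper's matrix operator $\mathsf{T}=\begin{pmatrix}\mathsf{A}&\mathsf{B}\\ \mathsf{C}&0\end{pmatrix}$), use the same weighted product norm (your $\Theta$ is the paper's $E$), establish the same three operator bounds via $K_{\rm err}$ smallness, Gaussian decay at the inner cutoff, and boundedness of the weight on the outer overlap, and extract the same two a priori estimates from the contraction plus the explicit form of the constant term. The only cosmetic difference is that you bound the $\xi^4$ factor in $K_{\rm err}$ by $L^2\xi^2$ and pair it with the $\|\xi^2\Psi\|_{L^2_w}$ part of the $Y_w$ norm, whereas the paper bounds $\xi^4\le L^4$ pointwise on $\supp\widetilde X_{\rm in}$ and keeps a pure $L^2_w\to L^2_w$ multiplier estimate; both give operator norms of the same admissible size over the stated range of $\ell_{\rm in}$.
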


In particular,  taking $\ell_{\rm in} = D_{\rm in}^{-1} n^{-3/4+\delta}$ for $\delta \in (0,1/8]$ and $n$ sufficiently large (depending on $\delta$),~\eqref{eq:estimatepsi} and~\eqref{eq:estimatephiout} become 
\begin{equation}
    \label{eq:consequentialestimate}
    \| \Psi \|_{Y_w} \lesssim n^{1/2} |\hat{\omega}| + n^{-1/2+4\delta} \, , \quad \| \phi_{\rm out} \|_{Z} \lesssim \exp\left( - C^{-1} n^{2\delta} \right) \, .
\end{equation}

%\da{To do: drop $m$ dependence, write $D_{\rm in}$, $D_{\rm out}$, $\widehat{D}$, $E$}

Before proving Lemma~\ref{lem:solvabilityprojected}, we define the solution operators 
\eqnb\label{sol_operators}
\begin{split} 
\mathsf{A} \colon Y_w \to Y_w \, , & \quad  \mathsf{A}F =  G \Leftrightarrow   G'' - (K_0 + K_2 \xi^2 ) G = Q ( \widetilde{X}_{\rm in} K_{\rm err} F ) \, , \; \int G W^* = 0  \\
\mathsf{B} \colon Z \to Y_w \, , & \quad  \mathsf{B}f =  G \Leftrightarrow   G'' - (K_0 + K_2 \xi^2 ) G = Q ( -R f X_{\rm out}'' - 2n^{-3/4} R f' {X}_{\rm out}''  ) \, ,  \; \int G W^* = 0  \\
\mathsf{C} \colon Y_w \to Z \, , & \quad  \mathsf{C}F =  g \Leftrightarrow   g'' - \widetilde{k} g =  -2n^{3/4} R^{-1} F' \chi_{\rm in}' - R^{-1}  F \chi_{\rm in}''  \, .
\end{split}
\eqne
We rewrite~\eqref{in_out_problem_again} in the vector form
\eqnb\label{entire_clean}
\begin{pmatrix}
I  & 0  \\ 0 & I
\end{pmatrix}
\begin{pmatrix}
\Psi \\ \phi_{\rm out}
\end{pmatrix}
-
\underbrace{ \begin{pmatrix}
\mathsf{A} & \mathsf{B} \\ \mathsf{C}  &0
\end{pmatrix}
}_{=: \mathsf{T} }
\begin{pmatrix}
\Psi \\ \phi_{\rm out}
\end{pmatrix} 
= \begin{pmatrix}
\mathsf{A} W \\ \mathsf{C} W
\end{pmatrix}\qquad \text{ in } \mathsf{N},
\eqne
where $\mathsf{N}\coloneqq Y_w \times Z $ is equipped with the  norm
\begin{equation}
    \label{eq:Nnormdef}
    \| (\Psi,\phi_{\rm out} ) \|_{\mathsf{N}} \coloneqq \| \Psi \|_{Y_w} + E \| \phi_{\rm out} \|_Z ,
\end{equation}
and $E\geq 1$ is a parameter.
The system~\eqref{entire_clean} will have a unique solution if $\| \mathsf{T} \|_{\mathsf{N}\to \mathsf{N}}  \leq 1/2$, in which case,
\begin{equation}
    \label{eq:firstestimateT}
    \| (\Psi,\phi_{\rm out}) \|_{\mathsf{N}} \leq 2 \| (\mathsf{A} W, \mathsf{C} W) \|_{\mathsf{N}} \, .
\end{equation}
Moreover, we see from \eqref{entire_clean} that the estimate on $\phi_{\rm out}$ can be improved:
\begin{equation}
    \label{eq:secondestimateT}
    \| \phi_{\rm out} \|_Z \leq \| \mathsf{C} \Psi \|_{Z} + \| \mathsf{C} W \|_{Z} \lesssim \| \mathsf{C} \|_{Y_w \to Z} \| (\mathsf{A} W, \mathsf{C} W) \|_{\mathsf{N}}  + \| \mathsf{C} W \|_{Z} \, .
\end{equation}

We now briefly explain the choices of cut-off scales~\eqref{eq:choicesofscales} and the role of the Gaussian weight (appearing in the definition~\eqref{def_of_Yw} of $Y_w$): The requirement that $\ell_{\rm in} \ll n^{-5/8}$ ensures that $\mathsf{A}$ is small because $\tilde{X}_{\rm in} K_{\rm err}$ is small pointwise. The requirement that $\ell_{\rm out} \gg n^{-3/4}$ ensures coercivity of the outer equation. It is convenient to choose $\ell_{\rm out}$ at the lower end of its range, $D_{\rm out} n^{-3/4}$, so that the operator $\mathsf{B}$ does not see the Gaussian weight, and hence $\| \mathsf{B} \|_{Z \to Z} = O(1)$. Together, these conditions account for the choice of $D_{\rm out}$. Choosing $E$ will make the coefficient associated to $\mathsf{B}$ small.

The parameter $D_{\rm in}$ enforces a separation between the inner scale $n^{-3/4}$ and the inner cut-off scale $\ell_{\rm in}$. The operator $\mathsf{C}$ benefits from the Gaussian weight when there is a separation of scales, since it only sees the `tails' of the inner function. However, the inner equation in \eqref{in_out_problem_again} depends on the error $K_{\rm err}$ of the approximation of $K$ by $K_0+K_2\xi^2$, and so our control of  $K_{\rm err}$, as well as  the operator $\mathsf{A}$, become worse when  $\ell_{\rm in}$ is increased. Since $\mathsf{A} W$ and $\mathsf{C} W$ enter the right-hand side of~\eqref{entire_clean}, the different choices of $\ell_{\rm in}$ produce a range of estimates on the solutions. With uniqueness, the solutions to~\eqref{eq_phi} will satisfy all the estimates simultaneously, with the worse of the two estimates holding in the overlap region.\footnote{The Gaussian weight is not technically necessary to prove existence in this problem, but it yields more precise estimates.}

We now prove Lemma~\ref{lem:solvabilityprojected} by making the above discussion rigorous. Before we proceed, we note the following estimate on $K_{\rm err}$:
 \begin{equation}
        \label{eq:estsonthepotential}
        K_{\rm err} = in^{1/2} \ho \frac{2p_0}{{b_0}^{1/2}} + ( 1 + |\xi|^4 ) O(n^{-1/2}) \qquad \text{ as }n\to \infty \, ,
    \end{equation}
    for $|\hat{\omega}| \leq n^{-1/2}$ and $|\xi| \leq n^{1/4}$, see \eqref{remainder_11_app} for a justification.

    %\dacomment{Do we need to allow more $\hat{\omega}$?}

% Note that, thanks to our choice of length scales \eqref{claim_choices_of_scales}, this step guarantees unique solvability of \eqref{entire_clean} for every choice of $\widehat{ \omega}_m \in  B(0,\ell_{\rm in}^4 n^{-1/2}) )$, together with the required a~priori  estimate \eqref{apriori},
%\eqnb\label{est_of_Psi_phiout}
%\| \Psi \|_Y + \| \phi_{\rm out} \|_Z \leq \| \mathsf{A} \|+\| \mathsf{C} \|   \leD_{\rm out} n^{-1/2} + \ell_{\rm in} n^{5/2} + (\ell_{\rm in}n^{3/4} )^{-1} .
%\eqne
%(Recall \eqref{weber_wm} that $\| W \|_{L^2}=O(1)$.) \\

\begin{proof}[Proof of Lemma~\ref{lem:solvabilityprojected}] We have
\begin{equation}
    \| \mathsf{T}(F,f) \|_{N} \leq (\| \mathsf{A} \|_{Y_w \to Y_w} + E^{-1} \| \mathsf{B} \|_{Z \to Y_w} + E \| \mathsf{C} \|_{Y_w \to Z} ) \| (F,f) \|_{N} \, .
\end{equation}
The parameter $D_{\rm out} \gg 1$ is fixed first to ensure solvability of the outer equation in Lemma~\ref{lem:solvabilityouter}. %\dacomment{Reference the cut there}

For $\| \mathsf{A} \|_{Y_w \to Y_w}$, the solvability of the projected inner equation in Lemma~\ref{lem:solvabilityinnerprojected} grants us
\eqnb\label{temp_est_A}
\begin{split}
\| G \|_{Y_w} &\lec \|  Q (\widetilde{X}_{\rm in} K_{\rm err} F) \|_{L^2_w} \lesssim \|  \widetilde{X}_{\rm in} K_{\rm err} F \|_{L^2_w} \lec (n^{1/2} |\hat{\omega}| + n^{-1/2} + \ell_{\rm in}^4 n^{5/2}) \| F \|_{L^2_w}  ,
\end{split}
\eqne
where we used~\eqref{eq:estsonthepotential} to estimate $\tilde{X}_{\rm in} K_{\rm err}$ pointwise %to obtain the pointwise bound
%\eqnb\label{Kerr_est_for_op_ests}
%\left| K_{\rm err} (\xi )\right|  = n^{-3/2} \left| k_{\rm err} (r) \right| \lec  C_1 \ell_{\rm in}^4 n^{5/2}+ n^{-1/2} (1+|\xi |^4 )
%\eqne
on ${\rm supp} \, \tilde{X}_{\rm in} \subset B(n^{1/4})$. Based on~\eqref{temp_est_A}, we impose $\hat{D} \gg 1$ and the restriction $\ell_{\rm in} \leq D_{\rm in}^{-1} n^{-5/8}$ with $D_{\rm in} \gg 1$ to ensure that $\| \mathsf{A} \|_{Y_w \to Y_w} \leq 1/6$.
 %(which is $\lec n^{1/4}$ (as required by \eqref{Kerr_est}) by our restriction on $\ell_{\rm in}$ in the theorem).

%We now  show that 
%\eqnb\label{ests_ABD}
%\\ | \mathsf{A} \|_{Y \to Y} \lec_m \left( \ell_{\rm in}^4 n^{5/2}+ n^{-1/2} (1+\widetilde{\ell_{\rm in}}^4 n^3 ) \right),\qquad \| \mathsf{B} \|_{Z \to Y} \lec (\ell_{\rm out}n^{3/4})^{-1},\qquad \| \mathsf{C} \|_{Y \to Z} \lec  ( \ell_{\rm in}n^{3/4})^{-1}.
%\eqne

%Note that this estimate on $\| \mathsf{A} \|$  implies in particular that we must have $\widetilde{\ell_{\rm in}} \lec n^{-5/8}$, as otherwise $\| \mathsf{A} \|$ may blow up as $n\to \infty$.

For $\| \mathsf{B} \|_{Z \to Y_w}$, Lemma~\ref{lem:solvabilityinnerprojected} grants us
\begin{equation}\label{temp_est_B}
    \begin{split}
\| G \|_{Y_w} &\lesssim \| Q  (  -Rf X_{\rm out}''- n^{-3/4} Rf' X_{\rm out}') \|_{L^2_w}\\
&\lec  n^{3/8} \left( \| f \| (\ell_{\rm out}n^{3/4})^{-2} + n^{-3/4} \| f' \| (\ell_{\rm out}n^{3/4})^{-1} \right) \\
&\lec  \| f \|_Z \, ,
\end{split}
\end{equation}
using  that $l_{\rm out} = D_{\rm out} n^{-3/4}$. We choose $E \gg 1$ to ensure that $E^{-1} \| \mathsf{B} \|_{Z \to Y_w} \leq 1/6$.
 
For $\| \mathsf{C} \|_{Y \to Z}$, Lemma~\ref{lem:solvabilityouter} grants us
\begin{equation}
\label{temp_est_C}
    \begin{aligned}
\| g \|_Z &\lec n^{-9/8} \|  2 n^{3/4} R^{-1}F'\chi_{\rm in}' + R^{-1} F \chi_{\rm in}''  \|\\
&\lec n^{-9/8} \left( n^{3/8} \ell_{\rm in}^{-1}  \| F' \|_{L^2(\supp X_{\rm in}')} +  \ell_{\rm in}^{-2} n^{-3/8} \| F \|_{L^2(\supp X_{\rm in}'')} \right)\\
&\lec  ( \ell_{\rm in}n^{3/4}) ^{-1}  \| F' \|_{L^2(\supp X_{\rm in}')} +  (\ell_{\rm in}n^{3/4})^{-2}\| F \|_{L^2(\supp X_{\rm in}'')} \\
&\lesssim (\ell_{\rm in} n^{3/4})^{-1} \exp\left( - C^{-1} (\ell_{\rm in} n^{3/4})^2 \right)\| F \|_{Y_w} \, .
    \end{aligned}
\end{equation}
The right-hand side is $\lesssim D_{\rm in}^{-1} \exp \left( - C^{-1} D_{\rm in}^2 \right) \| F \|_{Y_w}$. Therefore, we may choose $D_{\rm in} \gg 1$ to ensure that $E \| \mathsf{C} \|_{Y_w \to Z} \leq 1/6$.

Altogether, we complete the proof by observing that
\begin{equation}
    \| \mathsf{T}(F,f) \|_{\mathsf{N}}  \leq \frac{1}{2} \| (F,f) \|_{\mathsf{N}} 
\end{equation}
and applying~\eqref{eq:firstestimateT} and~\eqref{eq:secondestimateT}.  The operators $\mathsf{A}$ and $\mathsf{C}$ are  holomorphic with respect to $\hat{\omega}$ (through $K_{\rm err}$ and $\widetilde{k}$, respectively), and therefore the solutions to the projected system~\eqref{in_out_problem_again}, obtained via the Neumann series, are also holomorphic with respect to $\hat{\omega}$.
\end{proof}

It will be convenient to build two further conditions into the choice of $D_{\rm in}$ in Lemma~\ref{lem:solvabilityprojected}. Observe that $I \coloneqq \int W^2$ satisfies $0 < |I| \leq 1$ (see Corollary~\ref{cor:Inonzero}). We can thus assume that $D_{\rm in}$ is also large enough so that
\begin{equation}
    \label{eq:Imconditions}
    \left| I - \int \tilde{X}_{\rm in} W^2 \right| \leq |I|/4 \, , \quad \| \Psi \|_{L^2} \leq |I|/4 \, .
\end{equation}

\subsection{The one-dimensional reduced equation}\label{sec_solv_reduced}

In order to complete the proof of existence part of Theorem~\ref{thm_main}, it remains to solve the reduced equation~\eqref{reduced_eq}, that is, to find $\widehat{\omega} = \widehat{\omega }(n)$ 
such that
\begin{equation}\label{reduced_eq_again1}
    \int_{\R }  \left( \widetilde{X}_{\rm in}K_{\rm err} (W + \Psi ) - R\phi_{\rm out} X_{\rm out}''  - 2n^{-3/4}R\phi_{\rm out}' X_{\rm out}' \right) W = 0 \, .
\end{equation}

\begin{lemma}[Solvability of reduced equation]\label{L_reduced}
In the notation of Lemma~\ref{lem:solvabilityprojected}, for $n$ sufficiently large, there exists a unique $\widehat{\omega} \in B(\hat{D}^{-1} n^{-1/2})$ such that the reduced equation~\eqref{reduced_eq_again1} is satisfied. The eigenvalue correction $\widehat{\omega}$ satisfies the estimate
\begin{equation}
  \label{eq:omegaest}
    |\widehat{\omega}| \lesssim n^{-1} (1+ \ell_{\rm in}^4 n^3) + n^{-1/2} (\ell_{\rm in} n^{3/4})^{-1} \exp\left( - C^{-1} (\ell_{\rm in} n^{3/4})^2 \right) \, . 
\end{equation}
\end{lemma}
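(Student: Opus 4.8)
The plan is to solve the reduced equation~\eqref{reduced_eq_again1} by viewing its left-hand side as a holomorphic function of $\widehat{\omega}$ and applying Rouch\'e's theorem. For each $\widehat{\omega}\in B(\hat{D}^{-1}n^{-1/2})$, Lemma~\ref{lem:solvabilityprojected} furnishes the unique pair $(\Psi,\phi_{\rm out})=(\Psi(\widehat{\omega}),\phi_{\rm out}(\widehat{\omega}))\in Y_w\times Z$ solving the projected system~\eqref{in_out_problem_again}, which, as recorded immediately after that lemma (the operators $\mathsf{A},\mathsf{C}$ are analytic in $\widehat{\omega}$ through $K_{\rm err}$ and $\widetilde k$), depends holomorphically on $\widehat{\omega}$, and satisfies~\eqref{eq:estimatepsi}--\eqref{eq:estimatephiout}. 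Substituting this pair into the left-hand side of~\eqref{reduced_eq_again1} defines
\[
g(\widehat{\omega}):=\int_{\R}\left(\widetilde{X}_{\rm in}K_{\rm err}(W+\Psi)-R\phi_{\rm out}X_{\rm out}''-2n^{-3/4}R\phi_{\rm out}'X_{\rm out}'\right)W\,\d\xi\,,
\]
which is holomorphic on $B(\hat{D}^{-1}n^{-1/2})$, and the reduced equation is precisely $g(\widehat{\omega})=0$.

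The crux is to isolate a leading linear term. By~\eqref{eq:estsonthepotential}, on $\supp\widetilde{X}_{\rm in}\subset B(2\ell_{\rm in}n^{3/4})$ one has $K_{\rm err}=in^{1/2}\widehat{\omega}\,\tfrac{2p_0}{b_0^{1/2}}+(1+|\xi|^4)O(n^{-1/2})$, and since the leading term is \emph{constant in $\xi$},
\[
g(\widehat{\omega})=a(n)\,\widehat{\omega}+\mathcal{E}(\widehat{\omega})\,,\qquad a(n):=in^{1/2}\frac{2p_0}{b_0^{1/2}}\int\widetilde{X}_{\rm in}W^2\,\d\xi\,.
\]
The coefficient is nondegenerate: by Corollary~\ref{cor:Inonzero}, $I:=\int W^2\neq0$, and by~\eqref{eq:Imconditions}, $|\int\widetilde{X}_{\rm in}W^2-I|\le|I|/4$, so $|a(n)|\gec n^{1/2}$. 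For the remainder $\mathcal{E}$ I collect three contributions, using $\|W\|_{L^2}=1$ and the Gaussian decay of $W$ where it helps: the $O(n^{-1/2})$-part of $K_{\rm err}$, bounded pointwise on $\supp\widetilde{X}_{\rm in}$ by $n^{-1/2}(1+\ell_{\rm in}^4n^3)$, contributes $O(n^{-1/2}(1+\ell_{\rm in}^4n^3))$; the term $\int\widetilde{X}_{\rm in}K_{\rm err}\Psi W$ splits, using $\|\Psi\|_{L^2}\le|I|/4$ from~\eqref{eq:Imconditions}, into an $\widehat{\omega}$-linear piece of modulus $\le\tfrac13|a(n)||\widehat{\omega}|$ plus a term of size $O(n^{-1/2}(1+\ell_{\rm in}^4n^3))$; and the $\phi_{\rm out}$ terms, via $|\int R\phi_{\rm out}X_{\rm out}''W|+n^{-3/4}|\int R\phi_{\rm out}'X_{\rm out}'W|\lec\|\phi_{\rm out}\|_Z$ (the cutoff derivatives are supported where $W$ is $O(1)$ and $\|X_{\rm out}''\|_{L^\infty}\lec1$) together with~\eqref{eq:estimatephiout}, contribute $O((\ell_{\rm in}n^{3/4})^{-1}\exp(-C^{-1}(\ell_{\rm in}n^{3/4})^2))$. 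Every $\widehat{\omega}$-proportional correction carries a coefficient $\ll|a(n)|$ once $\hat{D}$ and $D_{\rm in}$ are taken large, so with $\mathcal{E}_0:=C\left(n^{-1/2}(1+\ell_{\rm in}^4n^3)+(\ell_{\rm in}n^{3/4})^{-1}\exp(-C^{-1}(\ell_{\rm in}n^{3/4})^2)\right)$ we obtain $|\mathcal{E}(\widehat{\omega})|\le\tfrac13|a(n)||\widehat{\omega}|+\mathcal{E}_0$ on $B(\hat{D}^{-1}n^{-1/2})$.

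Finally I apply Rouch\'e's theorem to $g=a(n)\widehat{\omega}+\mathcal{E}$ on the circle $|\widehat{\omega}|=\hat{D}^{-1}n^{-1/2}$: there $|a(n)\widehat{\omega}|\gec\hat{D}^{-1}$, while $|\mathcal{E}(\widehat{\omega})|\le\tfrac13|a(n)|\hat{D}^{-1}n^{-1/2}+\mathcal{E}_0$ with $\mathcal{E}_0\to0$ as $n\to\infty$, so $|g(\widehat{\omega})-a(n)\widehat{\omega}|<|a(n)\widehat{\omega}|$ on this circle for all large $n$. Hence $g$ and $\widehat{\omega}\mapsto a(n)\widehat{\omega}$ have equally many zeros inside, namely exactly one; that zero is the unique $\widehat{\omega}$ solving~\eqref{reduced_eq_again1}. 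From $|a(n)||\widehat{\omega}|=|\mathcal{E}(\widehat{\omega})|\le\tfrac13|a(n)||\widehat{\omega}|+\mathcal{E}_0$ we get $|\widehat{\omega}|\le 2\mathcal{E}_0/|a(n)|\lec n^{-1}(1+\ell_{\rm in}^4n^3)+n^{-1/2}(\ell_{\rm in}n^{3/4})^{-1}\exp(-C^{-1}(\ell_{\rm in}n^{3/4})^2)$, which is~\eqref{eq:omegaest}.

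I expect the main obstacle to be the nondegeneracy $\int W^2\neq0$ of the leading coefficient: because the inner potential $K_0+K_2\xi^2$ is non-self-adjoint, $W$ is genuinely complex-valued and no orthogonality relation is available for free, so this rests on the spectral analysis of Section~\ref{sec_solv_inner}, in particular on Corollary~\ref{cor:Inonzero} (equivalently, the absence of nontrivial Jordan blocks for $L_{\zeta}$). A secondary difficulty is the precise bookkeeping of how $\widehat{\omega}$ enters $K_{\rm err}$ — needed to extract a \emph{clean} linear term $a(n)\widehat{\omega}$ (the content of~\eqref{eq:estsonthepotential}) — together with carrying the holomorphic dependence of $(\Psi,\phi_{\rm out})$ on $\widehat{\omega}$ through the substitution, which is what makes the Rouch\'e argument legitimate.
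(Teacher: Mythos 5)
Your proof is correct and follows essentially the same route as the paper: Rouch\'e's theorem applied to the holomorphic function of $\widehat{\omega}$ obtained by substituting the solution of the projected system, with the leading linear-in-$\widehat{\omega}$ term extracted from $K_{\rm err}$ via~\eqref{eq:estsonthepotential}, nondegeneracy of its coefficient supplied by Corollary~\ref{cor:Inonzero} together with the conditions~\eqref{eq:Imconditions}, and the remaining contributions estimated exactly as in~\eqref{eq:hest}. The only cosmetic differences are that you do not rescale the reduced equation by $n^{-1/2}$ (so your $|a(n)|\gec n^{1/2}$ absorbs the factor), and you run Rouch\'e on the fixed circle $|\widehat{\omega}|=\hat{D}^{-1}n^{-1/2}$ and then read off the size of the zero a posteriori, whereas the paper shrinks the Rouch\'e radius $R_0$ to the smallest admissible value; both yield~\eqref{eq:omegaest}.
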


In particular, with the choice $\ell_{\rm in} = D_{\rm in}^{-1} n^{-3/4+\delta}$ for $\delta \in (0,1/8]$ (as specified in Lemma~\ref{lem:solvabilityprojected}) and $n$ sufficiently large depending on $\delta$, we see that
\begin{equation}
  |\widehat{\omega}| \lesssim n^{-1+4\delta} \, ,
\end{equation}
which, according to~\eqref{eq:consequentialestimate}, implies the estimate
\begin{equation}
    \| \Psi \|_{Y_w} \lesssim n^{-1/2+4\delta} \, ,
\end{equation}
giving \eqref{apriori}, as required.

\begin{proof}[Proof of Lemma~\ref{L_reduced}.] We will apply Rouch{\'e}'s theorem.\footnote{An alternative approach involving the implicit function theorem could also be suitable. However, making use of Rouch{\'e}'s theorem, we avoid the need to compute the derivative with respect to $\widehat{\omega}$.} The key point is to extract the main dependence of $K_{\rm err}$ on $\widehat{\omega}$. Recall the characterization~\eqref{eq:estsonthepotential},
\begin{equation} 
  \label{eq:kappaextract}
    K_{\rm err} (\xi ) = in^{1/2} \ho \frac{2p_0}{b_0^{1/2}} + \underbrace{(1+\xi^4) O(n^{-1/2} )}_{=: \kappa(\xi)} \, .%=: in^{1/2} \ho \frac{2p_0}{b_0^{1/2}} +  \kappa (\xi ) \, .
\end{equation}
%provided that $|\xi| \leq n^{1/4}$.
We can thus rewrite the reduced equation \eqref{reduced_eq} as
\begin{equation}\label{fgh}
    \begin{aligned}
         &0 = \underbrace{i  \widehat \omega \frac{2p_0}{b_0^{1/2}} \int_{\R}  \widetilde{X}_{\rm in}W^2  }_{=: f(\widehat \omega )}  + \underbrace{ i  \widehat \omega \frac{2p_0}{b_0^{1/2}} \int_{\R}  \widetilde{X}_{\rm in}\Psi  W }_{=: g(\widehat \omega)} \\
    &\qquad + \underbrace{ n^{-1/2} \int_{\R } \left( \kappa \widetilde{X}_{\rm in} (W+ \Psi )- R \phi_{\rm out} X_{\rm out}''- 2n^{-3/4} R \phi_{\rm out}' X_{\rm out}' \right) W }_{=: h(\widehat \omega )} \, .
    \end{aligned}
\end{equation}
   
Each of the functions $f$, $g$, and $h$ is holomorphic in $\widehat{\omega} \in B(\hat{D}^{-1} n^{-1/2})$, recall the discussion below Lemma~\ref{lem:solvabilityouter}. Evidently, $f$ has a single zero in $\C$, and we wish to demonstrate that $f+g+h$ has a single zero in $\widehat{\omega} \in B(\hat{D}^{-1} n^{-1/2})$. The function $g$ should be regarded as a lower order term, while the function $h$ should be regarded as a forcing term (however, it also has mild $\hat{\omega}$-dependence).

To apply Rouch{\'e}'s theorem, we must estimate $|f|$ from below and $|g|+|h|$ from above on the boundary of a disc $B(R_0)$ for some $R_0>0$. By~\eqref{eq:Imconditions}, we have
\begin{equation}
    |f| \geq R_0 \frac{2p_0 }{ b_0^{1/2}} \frac{3}{4} |I| \, , \quad |g| \leq R_0 \frac{2p_0 }{ b_0^{1/2}} \frac{1}{4} |I| \, .
\end{equation}
By applying the estimates for the solutions to the projected system, we have
\begin{equation}
    \label{eq:hest}
\begin{aligned}
        |h| &= n^{-1/2} \left| \int_{\R } \left( \kappa \widetilde{X}_{\rm in} (W+ \Psi )- R \phi_{\rm out} X_{\rm out}''- 2n^{-3/4} R \phi_{\rm out}' X_{\rm out}' \right) W \right| \\
        &\leq n^{-1/2} \| \kappa \|_{L^\infty({\rm supp} \, (\tilde{X}_{\rm in}))} (1 + \| \Psi \|_{L^2}) + 2n^{-1/2} \| \phi_{\rm out} \|_Z \\
        &\leq C_0 n^{-1} (1+ \ell_{\rm in}^4 n^3) + C_0 n^{-1/2} (\ell_{\rm in} n^{3/4})^{-1} \exp\left( - C_0^{-1} (\ell_{\rm in} n^{3/4})^2 \right) \, ,
\end{aligned}
\end{equation}
%which we remark is always $\leq \hat{D}^{-1} n^{-1/2}/4$
Hence, any
\begin{equation}
  \label{eq:restrictiononR0}
    R_0 \in \left[ \frac{4 b_0^{1/2}}{p_0 |I|} \times (\text{RHS of Eq.~\ref{eq:hest}}) \, , \hat{D}^{-1} n^{-1/2} \right]
\end{equation}
suffices. (Notably, the interval is nonempty when $D_{\rm in}$ is sufficiently large.) To obtain the estimate~\eqref{eq:omegaest}, we choose $R_0$ to be the smallest value allowed by~\eqref{eq:restrictiononR0}. %and apply the estimate on $\kappa$ in~\eqref{eq:kappaextract}.
%\da{The best estimate follows from taking the smallest $R_0$, but uniqueness holds (and is stated) for the largest $R_0$.}
\end{proof}

%and $\| \kappa \|_{L^\infty} \leq 1$. 

% $D(0,C_1 \ell_{\rm in}^4 n^{2} )\subset \C$. %Indeed, note that the dependence of $k$ on $\omega$ is only via $\gamma$, appearing in the denominators, recall \eqref{eq_phi}, \eqref{def_of_gamma}.

\begin{comment}
Thus since $\omega = n\Lambda (r_0) + i \sqrt{b_0} + \oo  +\ho  $ (recall %\eqref{form_of_omega} and
 \eqref{omega_modes}) the denominator does not vanish for $\ho  \in D(0,\epsilon)$, for a small $\epsilon$, which is a much larger disk than $D(0,C_1 \ell_{\rm in}^4 n^{2})$. Consequently, it can be shown that the above construction of $\Psi $ and $\phi_{\rm out}$ is holomorphic with respect to $\ho \in D(0,C_1 \ell_{\rm in}^4 n^{2})$.
\end{comment}

This concludes the existence part of Theorem~\ref{thm_main}. 

%\dacomment{I commented out ``This means that the use of the imaginary part of the potential $k$ is essential (recall the comments above Step 3 about the role of $\im\,k$)."}

%We note that our choice of the range of the inner scale $\ell_{\rm in}$ is dictated by the operator estimates \eqref{ests_ABD}. Remarkably, we must have $\ell_{\rm in}  \leq C^{-1} n^{-5/8}$ (for the smallness of $\| \mathsf{A} \|$), which is strictly smaller from $n^{-1/2}$, required for the positivity of $\re\,k(r)$ (recall Step 2). This means that the use of the imaginary part of the potential $k$ is essential (recall the comments above Step 3 about the role of $\im\,k$).

\subsection{Uniqueness}\label{sec_uniqueness}

We now demonstrate the uniqueness part of Theorem~\ref{thm_main}, for which we use a gluing procedure that is somewhat dual to the gluing procedure for existence:\footnote{The cutting operator $\phi \mapsto (\phi \chi_{\rm in}, \phi \chi_{\rm out})$ in the uniqueness proof is the $L^2$ adjoint of the operator $(\phi_{\rm in},\phi_{\rm out}) \mapsto \phi_{\rm in} \chi_{\rm in} + \phi_{\rm out} \chi_{\rm out}$, which sums the components into the ansatz for the existence proof. For linear gluing problems involving self-adjoint operators, one expects that the gluing systems for existence and uniqueness will be formal adjoints. For our purposes, it is enough to note that the roles of the cut-offs $\chi_{\rm in}$, $\chi_{\rm out}$ appearing in the inner-outer system \eqref{in_out_problem} are swapped.}
 Suppose that $\phi \in H^1_0(\R^+)$ is a solution to~\eqref{eq_phi} with $\omega = n\Lambda (r_0) + ib_0^{1/2} + \mu$ for some $\mu \in B(C_M n^{-1/2})$. We decompose
\begin{equation}
\label{reverse_decomp}
\phi_{\rm in} \coloneqq \phi \chi_{\rm in} \, , \quad \phi_{\rm out} \coloneqq \phi \chi_{\rm out} \, .
\end{equation}
We compute equations for $\phi_{\rm in}$ and $\phi_{\rm out}$. For example, the inner equation is
\begin{equation}
\phi_{\rm in}'' + k\phi_{\rm in} = 2\phi' \chi_{\rm in}' + \phi \chi_{\rm in}'' = 2\phi_{\rm out}' \chi_{\rm in}' + \phi_{\rm out} \chi_{\rm in}'' \text{ on } {\rm supp} \, \chi_{\rm in} \, , 
\end{equation}
where $\phi = \phi_{\rm out}$ on ${\rm supp} \, \chi_{\rm}'$ due to the overlap in the cut-off. After the analogous computation for $\phi_{\rm out}$, we obtain a system of equations
\begin{equation}
\label{in_out_problem_uniqueness}
\begin{cases}
&\phi_{\rm in}'' - k \phi_{\rm in}  = 2\phi_{\rm out}' \chi_{\rm in}' + \phi_{\rm out} \chi_{\rm in}'' \text{ on } {\rm supp} \, \chi_{\rm in} \\
&\phi_{\rm out}'' - k \phi_{\rm out} = 2\phi_{\rm in}'\chi_{\rm out}' + \phi_{\rm in}\chi_{\rm out}''  \text{ on } {\rm supp} \, \chi_{\rm out}  \, ,
\end{cases}
\end{equation}
nearly identical to~\eqref{in_out_problem}; notably, the cutoffs $\chi_{\rm in}$ and $\chi_{\rm out}$ in the boundary terms are swapped. Uniqueness for solutions to~\eqref{in_out_problem_uniqueness}, up to multiplication by constants, will imply uniqueness for solutions to~\eqref{eq_phi}.

%\dacomment{What is $C_M$ again? WO: it is a constant such that $D(n\Lambda(r_0)+ib_0^{1/2},C_M n^{-1/2})$ contains exactly the first $M$ modes - for this disk we first show uniqueness of the choices of $\omega$, and then for the unique $\omega$ we show uniqueness of $\phi$.}

\begin{proof}[Proof of Theorem~\ref{thm_main}: Uniqueness] The proof is divided into two cases. When the eigenvalue is close to an approximate eigenvalue, we proceed as in the existence part, applying the Lyapunov-Schmidt reduction and uniquely solving the equations, so that $\phi$ is a constant multiple of a constructed solution. When the eigenvalue is not close to one of the approximate eigenvalues, the inner operator will be invertible without projecting the equation, and we demonstrate that $\phi = 0$.

One difference is that we do not use Gaussian weights. Moreover, it will be convenient to choose
\begin{equation}
     \ell_{\rm in} \coloneqq 2 \ell_{\rm out} \, , \quad \ell_{\rm out} \coloneqq D n^{-3/4}
\end{equation}
for sufficiently large $D>0$.\\

\noindent \textbf{Case 1}. There exists $m=1,\ldots , M$ such that $\mu = \mu_m + \hat{\omega}$ satisfies $\hat{\omega} \in B(\hat{D}^{-1} n^{-1/2})$.\\

The constant $\hat{D}$ (which depends on $M$) will be determined below~\eqref{ests_ABD_gluing}, and that various constants below may depend on $M$.

We proceed as in the proof of existence. For brevity, we will omit writing the index $m$. We let $k_0$, $k_2$ be defined as in \eqref{k0_and_k2} with $\mu$ replaced by $\mu_m$. Let $\Psi = \Phi_{\rm in} - P \Phi_{\rm in}$, where $PF = (\int FW^*) W$ denotes the $L^2$-orthogonal projection onto the Weber function $W$, so that $P\Psi = 0$. After multiplying $\phi$ by a suitable constant, we may assume that $\Phi_{\rm in} = \iota W + \Psi$ with $\iota \in \{ 0,1\}$. Suppose $\iota = 1$. (When $\iota = 0$ it is not even necessary to consider the one-dimensional reduced equation.)  %Alternatively, if $P_{W_m} \Psi = 0$, we proceed analogously by replacing $W_m$ by $0$.)
We define analogues of the operators in~\eqref{sol_operators}~by 
\eqnb\label{tildesol_operators100}
\begin{split} 
\Aa \colon Y \to Y,& \quad  \Aa F =  G \Leftrightarrow   G'' - (K_0 + K_2 \xi^2 ) G = Q( \widetilde{X}_{\rm in} K_{\rm err} F ) \, , \;  \int G W^* = 0 \\
\Bb \colon Z \to Y,& \quad  \Bb f =  G \Leftrightarrow   G'' - (K_0 + K_2 \xi^2 ) G = Q( R f X_{\rm in}'' + 2n^{-3/4} R f' {X}_{\rm in}'' ) \, , \;  \int G W^* = 0 \\
\Dd \colon Y \to Z,& \quad  \Dd F =  g \Leftrightarrow   g'' - \widetilde{k} g =  2n^{3/4} R^{-1} F' \chi_{\rm out}' + R^{-1}  F \chi_{\rm out}'' \, .
\end{split}
\eqne
Recall~\eqref{def_of_Y}, \eqref{def_of_Z} for the definitions of $Y,Z$. Note that we do not use the weighted space $Y_w$ here, as Gaussian decay is not necessary for uniqueness.
We note that the roles of $\chi_{\rm in}$ and $\chi_{\rm out}$ on the right-hand sides in~\eqref{tildesol_operators100} are switched compared to~\eqref{sol_operators}. Then $(\Psi,\phi_{\rm out})$ satisfies the projected system
\eqnb\label{entire_clean_reverse2}
\begin{pmatrix}
I  & 0  \\ 0 & I
\end{pmatrix}
\begin{pmatrix}
\Psi \\ \phi_{\rm out}
\end{pmatrix}
-
\underbrace{\begin{pmatrix}
\Aa & \Bb \\ \Dd  &0
\end{pmatrix}}_{=: \mathsf{T}}
\begin{pmatrix}
\Psi  \\ \phi_{\rm out}
\end{pmatrix}
= \begin{pmatrix}
 \Aa W \\ \Dd W
\end{pmatrix}\qquad \text{ in } Y\times Z \, .
\eqne
%where $\Dd$ is as in Case 1 and $\Aa$, $\Bb$ now incorporate the projection $Q$: %redefined to account for the projection $Q$ onto $\overline{W_m}^\perp$ (recall \eqref{def_of_Q}), 
%Note that multiplication of the first equation of \eqref{entire_clean_reverse2} by $W_m$ and integrating gives the reduced equation
The one-dimensional reduced equation
\eqnb\label{reduced_reverse}
\int_{\R }  \left( \widetilde{X}_{\rm in}K_{\rm err} (W + \Psi ) + R\phi_{\rm out} X_{\rm in }''  + 2n^{-3/4}R\phi_{\rm out}' X_{\rm in }' \right) W =0 \, ,
\eqne
%\colr WO: is this the reduced equation? it looks like only the $h$ part of \eqref{fgh}; in particular, below  \eqref{eee3} what is $h$ compared to in the application of Rouche? \colb 
is also satisfied; it is obtained by integrating the inner equation against $W$. %\dacomment{$f$ is hidden inside $K_{\rm err}$. We can write the whole argument if you want.}

Similarly to~\eqref{temp_est_A}-\eqref{temp_est_C}, we obtain
\begin{equation}
\label{ests_ABD_gluing}
\| \Aa  \|_{Y \to Y} \leq C \hat{D}^{-1} + C(D) n^{-1/2} \, , \qquad \| \Bb  \|_{Z \to Y} \lec 1 \, , \qquad \| \Dd \|_{Y \to Z} \lec D^{-1} \, .
\end{equation}
We define the auxiliary space $\mathsf{N} \coloneqq Y \times Z$ and norm $\| (F,f) \|_{\mathsf{N}} \coloneqq \| F \|_{Y} + E \| f \|_{Z}$, as in~\eqref{eq:Nnormdef} with $Y$ replacing $Y_w$. The parameter $E$ is chosen so that $E^{-1} \| \Bb  \|_{Z \to Y} \leq 1/6$.\footnote{Alternatively, unlike in the existence proof, one actually gets $D^{-1}$ in this term.} Then the restriction $D \gg 1$ is applied so that $E \| \Dd \|_{Y \to Z} \leq 1/6$. The restrictions that $n \gg 1$ (depending on $D$) and $\hat{D} \gg 1$ (independently of $D$), guarantee that $\| \Aa  \|_{Y \to Y} \leq 1/6$. Hence, since the operator norm  $\| \mathsf{T} \|_{\mathsf{N} \to \mathsf{N}} \leq 1/2$, the projected system~\eqref{entire_clean_reverse2} is uniquely solvable, the solution is holomorphic with respect to  $\hat{\omega}$, and
\begin{equation}
    \| \Psi \|_{Y} + \| \phi_{\rm out} \|_Z \lesssim D^{-1} + \hat{D}^{-1} + C(D) n^{-1/2} \, .
\end{equation}

We next analyze the reduced equation~\eqref{reduced_reverse}. Recall the analogy with~\eqref{fgh}. The main point is to estimate the holomorphic function
\begin{equation}
    h(\hat{\omega}) \coloneqq n^{-1/2} \int_{\R } \left( \kappa \widetilde{X}_{\rm in} (W+ \Psi ) +  R \phi_{\rm out} X_{\rm out}'' + 2n^{-3/4} R \phi_{\rm out}' X_{\rm out}' \right) W
\end{equation}
from above. A similar calculation as in~\eqref{eq:hest} yields %\colr (WO: I think at this point we should emphasize the separation of $h$ into the part involving $\kappa \widetilde{X}_{\rm in}W$ and the rest)\colb
\begin{equation}\label{eee3}
    |h| \leq C(D) n^{-1} (1+ C(D) n^{-1/2}) + C_0 D^{-1} n^{-1/2} (D^{-1} + \hat{D}^{-1} + C(D) n^{-1/2}) \, ,
\end{equation}
where in the second term we exploited the cut-offs: $|X'_{\rm out}|, |X''_{\rm out}| \lesssim D^{-1}$. As before, we may choose $R_0 = \hat{D}^{-1} n^{-1/2}$ and close Rouch{\'e}'s theorem in the disc $D(R_0)$. Therefore, there is a unique choice of $\hat{\omega}$ and $(\Psi,\phi_{\rm out})$. A straightforward computation shows that the solution constructed in Sections~\ref{sec_solv_inner}--\ref{sec_solv_reduced} also solves \eqref{entire_clean_reverse1}, and so it must equal to $\phi$, as desired. \\

\noindent \textbf{Case 2}. $\mu \in \overline{B(C_M n^{-1/2})} \setminus \cup_{m=1}^M B(\mu_m,\widehat{D}^{-1} n^{-1/2})$. \\

Similarly to \eqref{k0_and_k2}, we set 
\begin{equation}
    k_0 \coloneqq -i \mu n^2 \frac{2p_0}{b_0^{1/2}}, \quad \text{ and }\quad k_2 \coloneqq in^3 \frac{p_0\Lambda'' (r_0)}{b_0^{1/2}} \, .
\end{equation}
However, in contrast to \eqref{k0_and_k2}, $k_0$ is defined using $\mu$, not an approximate eigenvalue. Since $\mu \ne \mu_m$ for any $m=1,\ldots , M$, the classification in Lemma~\ref{lem:theta0spectrum} yields that
\begin{equation}
G'' - (K_0+ K_2 \xi^2 ) G = F
\end{equation}
is uniquely solvable for every $F \in L^2$. Moreover, we have the uniform estimate
\begin{equation}
    \label{eq:UniformGest}
    \left\| G \right\|_{Y} \leq C(\hat{D}) \| F \|_{L^2}
\end{equation}
since $\mu \mapsto \d^2/\d\xi^2 - (K_0(\mu) + K_2\xi^2)$ is a continuous family of operators  defined on a compact set. (Notice the dependence on $\hat{D}$, which is considered to be fixed.) The outer equation remains solvable with the uniform estimates demonstrated in Lemma~\ref{lem:solvabilityouter}.

%However, due to the resolvent estimates, the norm of such solution operator is proportional to
\begin{comment}
\eqnb\label{inverse_norm_uniq}
\left( \min_{m\in \{ 1, \ldots , M \} } \left| K_0 + i \widetilde{\mu}_m n^{1/2} \frac{2p_0}{b_0^{1/2}} \right|   \right)^{-1} \lec \left( 
n^{1/2} \cdot C_1 \ell_{\rm in}^4 n^2 \right)^{-1} \lec n^{-1/2},
\eqne
\end{comment}
We have the error estimate
\eqnb
K_{\rm err} (\xi ) = K (\xi ) - (K_0+ K_2 \xi^2 ) = (1+|\xi|^4  ) O(n^{-1/2}) \, ,
\eqne
as compared to~\eqref{eq:estsonthepotential}, see Appendix~\ref{app_exp_of_k}. We define analogues of the operators \eqref{sol_operators} by 
\eqnb%\label{tildesol_operators1}
\begin{split} 
\Aa \colon Y \to Y,& \quad  \Aa F =  G \Leftrightarrow   G'' - (K_0 + K_2 \xi^2 ) G =  \widetilde{X}_{\rm in} K_{\rm err} F \\
\Bb \colon Z \to Y,& \quad  \Bb f =  G \Leftrightarrow   G'' - (K_0 + K_2 \xi^2 ) G =  R f X_{\rm in}'' + 2n^{-3/4} R f' {X}_{\rm in}''  \\
\Dd \colon Y \to Z,& \quad  \Dd F =  g \Leftrightarrow   g'' - \widetilde{k} g =  2n^{3/4} R^{-1} F' \chi_{\rm out}' + R^{-1}  F \chi_{\rm out}'' \, .
\end{split}
\eqne
%where $R$ is the rescaling operator.
%Defining and the rescaling operator $R$ as in \eqref{def_xi}, \eqref{k_rescaling}, \eqref{def_rescaling}, 
The equation~\eqref{eq_phi} is rewritten as
\eqnb\label{entire_clean_reverse1}
\begin{pmatrix}
I  & 0  \\ 0 & I
\end{pmatrix}
\begin{pmatrix}
{\Phi}_{\rm in} \\ \phi_{\rm out}
\end{pmatrix}
-
\underbrace{\begin{pmatrix}
\Aa & \Bb \\ \Dd  &0
\end{pmatrix}}_{=: \mathsf{T}}
\begin{pmatrix}
{\Phi}_{\rm in} \\ \phi_{\rm out}
\end{pmatrix}
= \begin{pmatrix}
0 \\ 0
\end{pmatrix}\qquad \text{ in } Y\times Z \, .
\eqne
%where we also used $\widetilde{X}_{\rm in}$, defined as above \eqref{eq_Psi_better}. 
We obtain estimates akin to~\eqref{ests_ABD_gluing},
\begin{equation}
\label{ests_ABD_gluing3}
\| \Aa  \|_{Y \to Y} \leq C(\hat{D}) C(D) n^{-1/2} \, , \qquad \| \Bb  \|_{Z \to Y} \leq C(\hat{D}) \, , \qquad \| \Dd \|_{Y \to Z} \lec D^{-1} \, .
\end{equation}
which guarantee that $I - \mathsf{T}$ is invertible on $Y \times Z$. Consequently, $\phi_{\rm out} =0$ and $\Phi_{\rm in}=0$, so there are no nontrivial solutions when $\mu$ belongs to Case~2. \end{proof}

%\da{This is not the most elegantly explained, and I'm going to improve it. For one, case 2 needs to go first, so as to set the size of the $\hat{D}$.}

\section{Proof of Theorem~\ref{thm:nonlinearinstab}}\label{sec_pf_thm2}

In this section, we prove Theorem~\ref{thm:nonlinearinstab}, namely, that for vortex columns~\eqref{vortex_col_repeat} satisfying the mild decay assumption~\eqref{eq:decayassumption}, modal linear instability implies non-linear instability. We assume the existence of an unstable eigenvalue $\lambda$ ($a \coloneqq \Re \lambda > 0$), with $L^Q$ eigenfunction $\eta$ in vorticity form. The strategy is to construct a nonlinear trajectory which tracks the growing linear solution $\Re (\ee^{t\lambda} \eta)$ \emph{backward in time}, that is, to construct a single trajectory on the unstable manifold~\cite{grenier,ABCD}. %\dacomment{add more citations}
%One must demonstrate that the remainder is $o(e^{ta})$ as $t \to -\infty$.

Nonlinear instability arguments often begin with linear semigroup estimates. For vortex columns, the main difficulty is that the abstract spectral theory only guarantees $L^Q$ semigroup estimates \emph{Fourier mode-by-Fourier mode, not uniformly in the modes}. We circumvent this difficulty by (i) developing high-order approximations which decay backward-in-time at any prescribed rate, and (ii) controlling the growth of the remainder by na{\"i}ve methods (energy estimates). This method is due to Grenier~\cite{grenier}. The key point is that the construction of the approximate solution only requires the linear semigroup estimates for finitely many iterations depending on the desired order of the approximation.

%Theorem~\ref{thm:nonlinearinstab}, namely,

%in the $L^2$ topology of vorticity
%the linear instability  (in the sense of \eqref{linear_ass}) implies  in the sense that, given $N\geq 2$, there exists $\delta >0$ such that for every $\varepsilon>0$ there exists $T_\varepsilon$ and a strong solution $u$ to the Euler equations \eqref{3d_euler} with $\| \omega (0) - \oo \|_{L^Q\cap H^N} \leq \varepsilon$ and $\| u(T_\varepsilon) - \ou \|_{L^\infty }$.\\

We proceed in the following way:
\begin{itemize}
    \item Section~\ref{sec_bs}. We analyze the Biot-Savart law on $\R^2 \times \T$ for $\omega \in L^Q_\sigma$. %, defined in~\eqref{def_X}.
    \item Section~\ref{sec_prop_LL}. We analyze the spectrum of the linearized Euler operator $\LL$, defined in~\eqref{def_LL}, mode-by-mode and prove smoothness of unstable eigenfunctions.
    \item Section~\ref{sec_semigroup}. We prove estimates on the linear semigroup $\ee^{tL_{n,\alpha}}$ acting in the $(n,\alpha)$ mode.
    \item Section~\ref{sec_iteration}. We construct high-order approximate solutions by Picard iteration and close the nonlinear argument around them.
\end{itemize}

%To this end we first discuss the definition of the Biot-Savart law on $\R^2 \times \T$ and construct the operator $\omega \mapsto BS[\omega ]$ in Section~\ref{sec_bs}.

%Then, in Section~\ref{sec_prop_LL}, we study the linearized operator $\LL$ (recall~\eqref{def_LL}). In particular we decompose $X$ into the subspaces of individual modes in $\theta $ and $z$, which gives rise to the decomposition of $\LL$ into operators $L_{n,\alpha}$. We then study spectral properties of the $L_{n,\alpha}$'s (see~Proposition~\ref{prop_spec} and \eqref{spectral_info}) and we show that unstable eigenfunctions are smooth (Lemma~\ref{lem:smoothnessofefns}).

%In the following Section~\ref{sec_semigroup} we provide semigroup estimates on $\ee^{tL_{n,\alpha}}$ in $L^Q$ as well as $H^k$ for any $k\geq 0$.

%Finally, in Section~\ref{sec_conclude_it} we conclude the proof Theorem~\ref{thm:nonlinearinstab} by constructing a Picard iteration and estimating the resulting error $\wo$ between the iterations and the exact solution to the $3$D Euler equations \eqref{3d_euler}. We then obtain the leading order behaviour of the exact solution evolving very close to the unstable solution $\omega_0 =\eta \ee^{\lambda t}$ to the linearized equations \eqref{linearization}, and a simple argument using translation in time proves the claim of Theorem~\ref{thm:nonlinearinstab}.

%\dacomment{I think this level of explanation is reasonable. To explain much more, one would have to introduce more notation earlier.}

\subsection{The Biot-Savart law}\label{sec_bs}

%Defining
The Biot-Savart law $u={\rm BS}[\omega ]$ requires special attention on the slab domain $\R^2 \times \T$ since, when $\omega$ is independent of $z$, the problem is two dimensional.\footnote{The problem is more precisely \emph{two-and-a-half dimensional}. The two-dimensional Biot-Savart law is well defined on $m$-fold rotationally symmetric vorticities in $L^2$, $m \geq 2$, as exploited in~\cite{vishik_2,ABCD}. The two-and-a-half dimensional case also contains vertical shears like $u = f(x') e_z$,  $x' \in \R^2$, with vorticity $\omega = (- \nabla^\perp_{x'} f, 0) =: (\omega',0)$ and $u = (- \Delta_{x'})^{-1} (0, \nabla^\perp_{x'} \cdot \omega')$.} The two-dimensional Biot-Savart law maps $L^2 \to \dot H^1$ and \emph{a priori} is only well defined up to constants unless additional decay is assumed; this is precisely the reason for our assumption $\omega \in L^Q$, $Q \in (1,2)$. 

\begin{lemma}[Biot-Savart law on $\R^2 \times \T$]
    \label{lem:biotsavartlaw}
Let $p \in (1,2)$. For $\omega \in L^p_\sigma(\R^2 \times \T)$, there exists a Biot-Savart operator $u = {\rm BS}[\omega] \in L^1_{\rm loc}(\R^2 \times \T)$ with the following properties:

Set $\omega = P_0 \omega + P_{\neq 0} \omega$, where $P_0 \omega(x) \coloneqq  \int_\T \omega(x',y_3) \, \d y_3$. Then
\begin{equation}
	\label{eq:Pneqzeroest}
    \| {\rm BS} [P_{\neq 0} \omega] \|_{W^{1,q}} \lesssim \| \omega \|_{L^q} \, , \quad \text{ for }q \in (1,2] \, ,
\end{equation}
\begin{equation}
    \label{eq:Lpestimatein2d}
    \| {\rm BS} [P_0 \omega] \|_{\dot W^{1,p} \cap L^{p^*_2}} \lesssim \| \omega \|_{L^p} \, ,
\end{equation}
where $p_2^* \coloneqq 2p/(2-p)$ denotes the $2$D Sobolev exponent.
\end{lemma}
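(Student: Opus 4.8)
The plan is to prove Lemma~\ref{lem:biotsavartlaw} by splitting the vorticity into its $z$-average $P_0\omega$ and the oscillating part $P_{\neq 0}\omega$, treating each separately with standard elliptic/Fourier tools. For the oscillating part, I would use the Fourier series in $z$: writing $P_{\neq 0}\omega = \sum_{k\neq 0} \omega_k(x') \ee^{iky_3}$, the Biot-Savart law is inverted mode-by-mode by solving $-\Delta u_k = \nabla \times \omega_k$ (with the three-dimensional curl, accounting for the $iky_3$ factor) on $\R^2$; the crucial point is that for $k\neq 0$ the operator $-\Delta_{x'} + k^2$ is invertible on $L^q(\R^2)$ with bounds uniform in $|k|\geq 1$, so no constant ambiguity arises. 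The bound~\eqref{eq:Pneqzeroest} then follows from the Calder\'on--Zygmund estimates for the operators $\nabla^2(-\Delta)^{-1}$ on $\R^2\times\T$ (or, equivalently, the Mikhlin--H\"ormander multiplier theorem applied to the symbol $\xi\otimes\xi/|\xi|^2$ restricted to $\xi = (\xi',k)$ with $k\in\Z\setminus\{0\}$), using that the divergence-free condition lets one recover $u$ from $\omega$ with a bounded zeroth-order-plus-Riesz-transform operator on $W^{1,q}$.

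For the $z$-independent part, the problem genuinely reduces to the two-dimensional Biot-Savart law. Here $\omega_0 \coloneqq P_0\omega \in L^p(\R^2)$ is divergence-free as a vector field on $\R^3$; writing $\omega_0 = (\omega_0', \omega_0^{(3)})$, the in-plane part $\omega_0'$ is a two-dimensional divergence-free vector field, which (since it has $L^p$ integrability with $p<2$, hence no harmonic part) is $\nabla^\perp_{x'} g$ for some $g$, contributing a vertical shear $u = -\Delta_{x'}^{-1}\nabla^\perp_{x'}\cdot\omega_0' \, e_z$; and the scalar $\omega_0^{(3)}$ is the usual 2D vorticity, giving the planar velocity via $u = \Delta_{x'}^{-1}\nabla^\perp_{x'}\omega_0^{(3)}$. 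In both cases one inverts $-\Delta_{x'}$ on $\R^2$ against an $L^p$ function with $p\in(1,2)$. The estimate~\eqref{eq:Lpestimatein2d} then follows from two classical facts: the Hardy--Littlewood--Sobolev inequality gives $\|\nabla\Delta_{x'}^{-1} h\|_{L^{p^*_2}}\lesssim \|h\|_{L^p}$ with $p^*_2 = 2p/(2-p)$ (the range $p<2$ being exactly what makes $p^*_2$ finite and positive), and the Calder\'on--Zygmund inequality gives $\|\nabla^2\Delta_{x'}^{-1}h\|_{L^p}\lesssim\|h\|_{L^p}$, hence the $\dot W^{1,p}$ bound on $u$. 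The restriction $p>1$ is needed for the latter (and for the weak-type endpoints to be avoided).

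The main obstacle, and the reason the statement is phrased carefully, is the \emph{selection of the correct constant} in the 2D Biot-Savart law so that the operator is genuinely well-defined (single-valued) on $L^p_\sigma$ rather than defined modulo constants: the naive Newtonian potential $\frac{1}{2\pi}\log|x'|$ convolution does not map $L^p(\R^2)\to L^{p^*_2}$ without care, and one must instead define $u$ directly via the gradient of the Newtonian potential (i.e. the kernel $\frac{x'^\perp}{2\pi|x'|^2}$), whose convolution with an $L^p$ function, $p\in(1,2)$, lands in $L^{p^*_2}$ by Hardy--Littlewood--Sobolev and has distributional curl equal to $\omega_0^{(3)}$ and zero divergence. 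One should check that this $u$ is the \emph{unique} divergence-free vector field in $L^{p^*_2}$ with the prescribed curl (uniqueness because a harmonic, divergence-free $L^{p^*_2}$ vector field with vanishing curl on $\R^2$ must be zero, by Liouville, since $p^*_2<\infty$). Everything else is routine: membership in $L^1_{\rm loc}$ is immediate from the two displayed estimates and Sobolev embedding on the compact factor $\T$, and assembling $u = {\rm BS}[P_0\omega] + {\rm BS}[P_{\neq 0}\omega]$ gives the full operator. I would also remark that the implicit constants are independent of the period of the torus after the rescaling normalization mentioned in the introduction.
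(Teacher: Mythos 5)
Your proof is correct, and the treatment of the $z$-averaged part $P_0\omega$ is essentially the same as the paper's (reduction to the $2$D Biot--Savart law plus Hardy--Littlewood--Sobolev for $L^{p^*_2}$ and Calder\'on--Zygmund for $\dot W^{1,p}$). For the oscillating part $P_{\neq 0}\omega$, however, you take a genuinely different route: you invoke the Mikhlin--H\"ormander multiplier theorem on $\R^2\times\T$, applied to the degree-zero symbol $\xi\otimes\xi/|\xi|^2$ with the vertical frequency restricted to $|k|\geq 1$, so the singularity at the origin is simply never seen. (Your ``mode-by-mode bound uniform in $k$'' observation by itself is not quite enough to conclude for $q\neq 2$, but you correctly fall back to the multiplier theorem; one does need either de Leeuw transference or vector-valued Calder\'on--Zygmund theory to justify Mikhlin on the hybrid domain $\R^2\times\T$, a point worth flagging.) The paper avoids all multiplier machinery: it works directly with the stream function $\psi$ solving $-\Delta\psi = P_{\neq 0}\omega$ with zero vertical mean, tests the equation against $|\psi|^{q-2}\psi$ to get control of $\int|\nabla|\psi|^{q/2}|^2$, proves a Poincar\'e-type inequality on $B_1\times\T$ by a compactness--contradiction argument (where the zero-mean-in-$z$ condition is exactly what kills the constant), sums over a covering of $\R^2$ by balls, and then runs local elliptic regularity summed in $\ell^q$ over the same covering to upgrade to $W^{2,q}$. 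The trade-off: your approach is shorter and leans on well-known Fourier analysis, while the paper's is more elementary and self-contained and makes the role of the zero-mean condition completely transparent through the compactness step. Both correctly identify the key structural point, namely that projecting away the $k=0$ mode removes the constant ambiguity of the $2$D Biot--Savart law.
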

Recall \eqref{def_Lpsigma} that the subscript ``$\sigma$'' in $L^p_\sigma$ denotes weakly divergence-free vector fields.
\begin{proof}
By density, it is enough to consider $\omega \in C^\infty_0(\R^2 \times \T)$. We first show \eqref{eq:Pneqzeroest}.

We consider $P_{\neq 0} \omega$, which has zero mean in the vertical variable, so we may compute $\psi = (-\Delta)^{-1} P_{\neq 0} \omega$ such that $\psi$ also has zero mean in the vertical variable  and decays in the horizontal variable. Namely we define\footnote{$\psi$ is computed via multiplication by $1/|(\xi',\alpha)|^2$ on the Fourier side, where $\xi' \in \R^2$ and $\alpha \in 2\pi\Z$. Because $P_{\neq 0} \omega$ has zero mean in the vertical variable, its Fourier transform is zero whenever $\alpha = 0$, and therefore its Fourier support is away from the origin.} $\psi$ as the solution to the Poisson problem
\begin{equation}
\label{psi_eq_bs}
\begin{aligned}
&-\Delta \psi = P_{\ne 0} \omega \quad\text{ in } \R^2\times \T \qquad \text{ with } \\
&\quad \qquad\int_{\T} \psi (x',y_3)\,\d y_3=0  \text{ and } |\psi (y',x_3)|\to 0\text{ as } |y'|\to \infty  ,  \quad \text{ for all }x\, .
\end{aligned}
\end{equation}

To prove~\eqref{eq:Pneqzeroest}, it will be enough to obtain
\begin{equation}
	\label{eq:psiinlq}
\| \psi \|_{L^q(\R^2 \times \T)} \lesssim \| P_{\neq 0} \omega \|_{L^q(\R^2 \times \T)} \, .
\end{equation}
Indeed, given \eqref{eq:psiinlq}, we can apply the local regularity theory for $-\Delta \psi = P_{\neq 0} \omega$ to demonstrate
\begin{equation}
	\label{eq:esttosumoveracoveringofballs}
\| \nabla^2 \psi \|_{L^q(B_1(x'_0) \times \T)} \lesssim \| P_{\neq 0} \omega \|_{L^q(B_2(x'_0) \times \T)} + \| \psi \|_{L^q(B_2(x'_0) \times \T)} \, , \quad \forall x_0' \in \R^2 \, .
\end{equation}
We sum~\eqref{eq:esttosumoveracoveringofballs} in $\ell^q$ over a covering of $\R^2\times \T$ by balls of radius $1$ to obtain the global estimate
\begin{equation}
\| \psi \|_{W^{2,q}(\R^2 \times \T)} \lesssim \| P_{\neq 0} \omega \|_{L^q(\R^2 \times \T)} \, .
\end{equation}
To obtain the Biot-Savart law $u={\rm BS}[\omega ]$, we set $u \coloneqq {\rm curl} \, \psi$.

In order to prove~\eqref{eq:psiinlq}, we  first  multiply \eqref{psi_eq_bs} by $|\psi|^{q-2}\psi/(q-1)$ and integrate by parts to obtain
\begin{equation}
	\label{eq:nablapsiq2}
\int |\nabla |\psi|^{q/2}|^2  = \frac{1}{q-1} \int P_{\neq 0} \omega |\psi|^{q-2}\psi  \, .
\end{equation}

We will also prove the localized inequality
\begin{equation}
	\label{eq:Poincaretypeineq}
\int_{B_1 \times \T} |\psi|^q  \lesssim \int_{B_1 \times \T} |\nabla |\psi|^{q/2}|^2  + \int_{B_1 \times \T} |P_{\neq 0} \omega|^q  ,%\left| \int_{B_1 \times \T} \psi \, \d x \right|^q
\end{equation}
specifically for solutions to~\eqref{psi_eq_bs}.  We prove~\eqref{eq:Poincaretypeineq} for scalar-valued solutions of~\eqref{psi_eq_bs} by contradiction and compactness. For the sake of contradiction, suppose that there exist $C_k \to +\infty$ and scalar-valued functions $\psi_k$, $\omega_k$ solving~\eqref{psi_eq_bs}, with $P_{\neq 0} \omega_k = \omega_k$, and satisfying
\begin{equation}
	\label{eq:inequalityhooray}
\int_{B_1 \times \T} |\psi_k|^q  \geq C_k \left(  \int_{B_1 \times \T} |\nabla |\psi_k|^{q/2}|^2  + \int_{B_1 \times \T} |\omega_k|^q  \right) \, .%\left| \int_{B_1 \times \T} \psi \right|^q
\end{equation}
We define
\begin{equation}
\widetilde{\psi}_k := \frac{\psi_k}{\| \psi_k \|_{L^q}} \, , \quad \tilde{\omega}_k := \frac{\omega_k}{\| \psi_k \|_{L^q}} \, ,
\end{equation}
where the $L^q$ norm is on $B_1 \times \T$.
Then
\begin{equation}\label{eq:compact1}
\Delta \widetilde{\psi}_k = \widetilde{\omega}_k \, , \quad \| \widetilde{\psi}_k \|_{L^q} = 1
\end{equation}
while
\begin{equation}\label{eq:compact2}
\int_{B_1 \times \T} |\nabla |\widetilde{\psi}_k|^{q/2}|^2  + \int_{B_1 \times \T} |\tilde{\omega}_k|^q  \to 0 
\end{equation}
as $k\to \infty$. By the local regularity theory, we moreover have
\begin{equation}
	\label{eq:compactnessa}
\sup_k \| \nabla \widetilde{\psi}_k \|_{L^q(B_{r} \times \T)} \lesssim_r 1 \, , \quad \forall r \in [1/2,1) \, .
\end{equation}
Upon passing to a subsequence (which we relabel), we obtain $\widetilde{\psi }_\infty \in L^q_{\rm loc} (B_1\times \T )$ such that
\begin{equation}
\widetilde{\psi}_k \to \widetilde{\psi}_\infty \text{ in } L^q(B_r \times \T) \, , \quad \forall r \in [1/2,1)
\end{equation}
by the compactness granted in~\eqref{eq:compactnessa}, and
\begin{equation}
\Delta \widetilde{\psi}_\infty = 0 \text{ in } B_1 \times \T \, , \quad \int_{\T} \widetilde{\psi}_\infty (x',y_3)\,\d y_3=0 \quad \text{ for all }x.
\end{equation}
By the compactness granted by~\eqref{eq:compact1}--\eqref{eq:compact2}, we have
\begin{equation}
|\widetilde{\psi}_k|^{q/2} \to |\widetilde{\psi}_\infty|^{q/2} \text{ in } L^2 (B_1 \times \T) \, , \quad \nabla |\widetilde{\psi}_\infty|^{q/2} = 0 \, , \quad \| |\widetilde{\psi}_\infty|^{q/2} \|_{L^2} = 1 \, ,
\end{equation}
which implies that $|\widetilde{\psi}_\infty|^{q/2}$  (hence, also $|\widetilde{\psi}_\infty|$)  is a non-zero constant. Since $\widetilde{\psi}_\infty$ solves $\Delta \widetilde{\psi}_\infty = 0$, it is a smooth function.  Therefore, since any smooth, scalar-valued function whose absolute value is a constant function must be a constant, we deduce that $\widetilde{\psi}_\infty$ is a non-zero constant. Finally, since $\widetilde{\psi}_\infty$ has zero mean in $x_3$, it must be  zero. This contradicts that $\| \widetilde{\psi}_\infty \|_{L^q} = 1$ and completes the proof of~\eqref{eq:Poincaretypeineq}.

Finally, we combine~\eqref{eq:Poincaretypeineq}, summed over balls covering $\R^2$, together with~\eqref{eq:nablapsiq2} and the mean-zero property $\int \psi(x',y_3) \, \d y_3 = 0$ (for every $x$) to obtain
\begin{equation}
\int |\psi|^q \lesssim \int P_{\neq 0} \omega |\psi|^{q-1}  \lesssim \| P_{\neq 0} \omega \|_{L^q} \| \psi \|_{L^q}^{1-q} \, .
\end{equation}
This proves~\eqref{eq:psiinlq} and, therefore,~\eqref{eq:Pneqzeroest}.

We now verify \eqref{eq:Lpestimatein2d}, that is we consider  vorticities $\omega$ supported in Fourier modes with $\alpha = 0$, that is, independent of $x_3$. For these functions, we have
\begin{equation}
    u = (- \p_1^2 - \p_2^2)^{-1} ( \p_2 \omega_3, - \p_1 \omega_3, \p_1 \omega_2 - \p_2 \omega_1 ) \, .
\end{equation}
The horizontal velocity $(u_1,u_2)$ is determined by the 2D Biot-Savart law, and $u_3$ is obtained from $(\omega_1,\omega_2)$ by $-1$-order Fourier multipliers with homogeneous symbol which is smooth away from the origin. The 2D Calder\'on-Zygmund inequality and the Sobolev inequality therefore yield the desired estimate~\eqref{eq:Lpestimatein2d} when $\omega \in L^p$ and $1 < p < 2$.
\end{proof}

\subsection{Linear instability in $W^{1,Q}$}\label{sec_lin_instab_w1q}

Here we verify that, after rescaling, the solution $\phi$ constructed in Theorem~\ref{thm_main} satisfies $u\in W^{1,Q}(\R^2 \times \T)$ for some $Q\in (1,2)$. Eigenfunctions of the form~\eqref{perturbation_form} are periodic in $z$ with period $2\pi/\alpha$, and we may assume that $\alpha \in 2\pi \N$ after rescaling.

% , which implies that they are defined on $\R^2 \times \T$, provided the length of the flat torus $\T$ is an integer multiple of $\alpha^{-1}$. We  assume that the length of $\T$ is between $9/10$ and $1$, as mentioned below \eqref{vortex_col_repeat}. 

For the $W^{1,Q}$ regularity of $u$, we first recall that the potential $k$ in the Rayleigh equation \eqref{eq_phi} admits a singularity of order $n^2r^{-2}$ as $r\to 0$, and that $k(r)=O(n^2)$ as $r\to +\infty$. Thus, from the information on $\phi$ provided by Theorem~\ref{thm_main}, we see that $|u_r (r)| \lec r^{cn}$ as $r\to 0^+$ and $|u_r(r)| \lec  \ee^{-cnr} $ as $r\to \infty$ for some constant $c>0$. (Recall also the rescaling \eqref{phi_vs_ur} that $\phi = r^{3/2}(1+\beta^2 r^2 )^{-1/2} u_r $.) In particular, $\frac{u_r}r, r u_r', r^2 u_r'' \in L^2 (\R^+)$, which can be obtained by a simple weighted energy estimate for the equation  \eqref{eq_for_u}. This and the system \eqref{utheta_uz_from_ur} determining $u_\theta$, $u_z$ from $u_r$ show that the eigenfunction $u$ of the form \eqref{perturbation_form} belongs to $W^{1,Q} (\R^2 \times \T)$ for some $Q\in (1,2)$, which gives the following.
\begin{cor}[Linear instability in $W^{1,Q}$]\label{cor_lin_unst}
Let $M\geq 1$. For every $m\in \{ 1, \ldots , M\}$ and sufficiently large $n$ there exist  $\lambda_m \in \C$ with $\re\,\lambda_m >0$ (i.e. $\lambda_m = -i\omega_m$) and eigenfunction 
\[
u=\ee^{i(\alpha z - n \theta )}(u_r (r) e_r + u_\theta (r) e_\theta + u_z (r) e_z ) \in W^{1,Q} (\R^2\times \T ) \, ,
\]
such that $u\ee^{\lambda_m t}$ satisfies the linearized Euler equations \eqref{eq:linearizedevaleqn}.
\end{cor}

\subsection{Properties of $\LL$}\label{sec_prop_LL}
We study properties of the operator $\LL$, defined as a linearization of the $3$D Euler equations \eqref{3d_euler} around $\ou$. Recall from \eqref{def_LL} that $\LL  \coloneqq D(\LL ) \subset X \to X$ is defined by 
\[
\begin{split}
- \LL \omega   &= \ou \cdot \nabla \omega - \omega \cdot \nabla \ou - \oo \cdot \nabla u + u \cdot \nabla \oo \\
&=: \ou \cdot \nabla \omega + \wL \omega ,
\end{split}
\]
where $u={\rm BS}[\omega ]$ is the velocity field given by the Biot-Savart operator in Lemma~\ref{lem:biotsavartlaw}. % above.

Using the decay assumption~\eqref{eq:decayassumption}, one may show that the operator $\wL \colon L^Q_\sigma \to L^Q_\sigma$ is bounded. In particular, we may demonstrate that, for any $\lambda \in \C$ with $|\re \, \lambda| > \| \tilde{L} \|_{L^Q_\sigma \to L^Q_\sigma}$ and any $f\in L^Q_\sigma$, there exists a unique solution $\omega \in L^Q_\sigma$ to 
\eqnb\label{eq111}
(\lambda - \LL ) \omega = f 
\eqne
which satisfies the \emph{a~priori} estimate
\eqnb\label{eq112}
    \left( |\re \lambda| - \| \wL \|_{L^Q_\sigma \to L^Q_\sigma} \right) \| \omega \|_{L^Q_\sigma} \leq \| f \|_{L^Q_\sigma} \, ,
\eqne
obtained when we multiply~\eqref{eq111} by  $|\omega|^{Q-2} \omega^*$ (complex conjugate)  and integrate by parts.  We emphasize that here we made use of the cancellation
\[
\int_{\R^2\times \T} (\overline{u}\cdot \nabla \omega ) \cdot \omega^* |\omega |^{Q-2}=0,
\]
a consequence of the fact that $\mathrm{div}\,\overline{u} =0$, so that the \emph{a~priori} estimate~\eqref{eq112} involves $\wL$ only, while the resolvent problem~\eqref{eq111} involves $\LL$. 
We note that the existence of solutions to~\eqref{eq111} can be proved, for example, by considering vanishing viscosity approximations. 

Thus, \eqref{eq112} lets us estimate the spectrum of $\LL$,
\[
    \sigma(\LL ) \subset \left\lbrace  |\re\, \lambda| \leq \| \wL \|_{L^Q_\sigma \to L^Q_\sigma} \right\rbrace .
\]
In particular, the spectral bound is finite:
\begin{equation}
    \label{eq:Lambdaspectralbound}
    \Lambda := \sup_{\lambda \in \sigma(L)} \re \, \lambda < +\infty \, .
\end{equation}
In order to extract more information about $\sigma (\LL)$, we first rewrite $\LL$ in cylindrical coordinates. 

To this end, we recall that the Euler equations \eqref{3d_euler} in cylindrical coordinates take the form 
\[
\begin{cases}
    D_t u_r - \frac{u_\theta^2}{r} + \p_rp =0 \\
    D_t u_\theta + \frac{u_\theta u_r}{r} + \frac1r \p_\theta p =0\\
    D_t u_z + \p_z p =0,
\end{cases}
\]
where $D_t = \p_t + u_r \p_r + \frac{u_\theta }r \p_\theta + u_z \p_z$. In vorticity formulation, this gives
\[
\begin{cases}
    \p_t \omega_r + (u\cdot \nabla )\omega_r  - (\omega \cdot \nabla )u_r =0 \\
    \p_t \omega_\theta + (u\cdot \nabla )\omega_\theta  - (\omega \cdot \nabla )u_\theta = \frac1r (u_r \omega_\theta - u_\theta \omega_r ) \\
    \p_t \omega_z + (u\cdot \nabla )\omega_z  - (\omega \cdot \nabla )u_z =0, 
\end{cases}
\]
where $u\cdot \nabla = u_r \p_r + \frac1r u_\theta \p_\theta + u_z \p_z $, and 
\[
\omega_r = \frac1r \p_\theta u_z -\p_z u_\theta ,\qquad \omega_\theta = \p_z u_r - \p_r u_z ,\qquad \omega_z = \frac1r \p_r (ru_\theta ) - \frac1r \p_\theta u_r. 
\]
The velocity field associated to the vortex column \eqref{vortex_col_repeat} is
\[
\ou = \ou_\theta e_\theta + \ou_z e_z = V(r)  e_\theta + W(r) e_z \, ,
\]
and its vorticity is
\[
\ow = \ow_\theta e_\theta + \ow_z e_z = W'(r) e_\theta + \frac{(rV)'}{r} e_z 
\]
Thus $\omega + \ow $ satisfies the $3$D Euler equations if
\eqnb\label{001}
\begin{split}
\p_t \begin{pmatrix}
\omega_r \\ \omega_\theta \\ \omega_z 
\end{pmatrix} &+ 
\begin{pmatrix}
\frac{V}{r} \p_\theta \omega_r + W \p_z \omega_r \\
\frac{V}{r} \p_\theta \omega_\theta + W \p_z \omega_\theta +\left( - V'  +\frac{V}{r} \right) \omega_r \\
\frac{V}{r} \p_\theta \omega_z + W \p_z \omega_z -W' \omega_r 
\end{pmatrix}
+
\begin{pmatrix}
-\frac{W'}{r} \p_\theta u_r - \frac{(rV)'}{r} \p_z u_r \\
\left( W''-\frac{W'}r \right) u_r - \frac{W'}{r} \p_\theta u_\theta - \frac{(rV)'}{r} \p_z u_\theta \\
\left( \frac{(rV)'}{r} \right)' u_r -\frac{W'}{r} \p_\theta u_z - \frac{(rV)'}{r} \p_z u_z 
\end{pmatrix} \\
&+\begin{pmatrix}
u\cdot \na \omega_r - \omega \cdot \na u_r \\
u\cdot \nabla \omega_\theta - \omega \cdot \na u_\theta - \frac1r u_r \omega_r +\frac1r u_\theta \omega_r  \\
u\cdot \nabla \omega_z - \omega \cdot \na u_z 
\end{pmatrix} =0 \, .
\end{split}
\eqne

We recover $\LL$ by considering only the last two terms in the first line of \eqref{001}. 
Namely, writing\footnote{By this, we mean that $L^Q_\sigma$ is the $L^Q$-closure of the set of finite linear combinations of elements of $\bigcup_{n,\alpha} X_{n,\alpha} \ee^{i(\alpha z + n \theta)}$.} 
\[
L^Q_\sigma(\R^2 \times \T;\R^3)  = \bigoplus_{(n,\alpha) \in \Z^2} X_{n,\alpha }
\]
where $X_{n,\alpha}$ is defined in \eqref{def_Xna}, namely
\[
X_{n,\alpha } = \left\lbrace \omega  = w(r) \ee^{i(n\theta +  \alpha z)} \colon w \in L^Q (\R^+; r\d r )   \, , \; \frac1r  (r w_r )'  + \frac{in }r w_\theta + i\alpha w_z =0 \right\rbrace , 
\]
we have, for $\omega = w(r) \ee^{i(n\theta +  \alpha z)} \in X_{n,\alpha}$, that $
\LL (w(r) \ee^{i(\alpha z + n \theta)} ) = (L_{n,\alpha} w )(r) \ee^{i(\alpha z + n \theta)}$, where
\[
L_{n,\alpha } = A_{n,\alpha } + B_{n,\alpha } \, ,
\]
and, in cylindrical coordinates of $w$,
\eqnb\label{def_Ana}
- A_{n,\alpha } \begin{pmatrix}
w_r \\ w_\theta \\ w_z 
\end{pmatrix} \coloneqq
\begin{pmatrix}
in \frac{V}{r} + i\alpha  W  &0 & 0  \\
- V'  +\frac{V}{r} & in \frac{V}{r}  +i\alpha  W & 0 \\
-W' &0 & in \frac{V}{r} + i \alpha W
\end{pmatrix}
 \begin{pmatrix}
w_r \\ w_\theta \\ w_z 
\end{pmatrix}
\eqne
\eqnb\label{def_Bna}
- B_{n,\alpha } \begin{pmatrix}
w_r \\ w_\theta \\ w_z 
\end{pmatrix} \coloneqq
\begin{pmatrix}
- in \frac{W'}{r} - i\alpha  \frac{(rV)'}{r} &0&0 \\
 W''-\frac{W'}r & - in \frac{W'}{r}   - i\alpha \frac{(rV)'}{r} &0 \\
\left( \frac{(rV)'}{r} \right)' & 0& - in \frac{W'}{r}  -i\alpha \frac{(rV)'}{r}  
\end{pmatrix}
 \begin{pmatrix}
u_r \\ u_\theta \\ u_z 
\end{pmatrix} \, .
\eqne
In the above notation, $w = (w_r, w_\theta , w_z )$ is a vector function of one variable $r$, and the Biot-Savart law in $X_{n,\alpha }$ takes the form %\dacomment{check sign}\wozcomment{W:  checked and corrected the sign at $in$}
\eqnb\label{BS_in_Xna}
 \begin{pmatrix}
w_r \\ w_\theta \\ w_z 
\end{pmatrix}  = \begin{pmatrix}
0 & -i\alpha  & \frac{in }r \\
i\alpha  & 0 & - \p_r  \\
\frac{-in}r & \p_r + \frac1r  & 0 \end{pmatrix}
 \begin{pmatrix}
u_r \\ u_\theta \\ u_z 
\end{pmatrix}.
\eqne
We often exploit the canonical isomorphism between $\omega \in X_{n,\alpha}$ and $w \in L^Q(\R^+; r \, dr)$, where $\omega = w(r) \ee^{i(\alpha z + n\theta)}$.
Using the above decomposition we now characterize the spectra of $A_{n,\alpha}$ and $L_{n,\alpha}$
\begin{prop}[Mode-by-mode spectral properties of $\LL$]\label{prop_spec}
$A_{n,\alpha }$ and $B_{n,\alpha }$ are bounded on $X_{n,\alpha }$, and
\begin{enumerate}
\item[(i)] $\sigma (A_{n,\alpha }) = \overline{{\rm range}(-in \Omega - i\alpha W)}$% if $(n,\alpha) \neq 0$. Moreover, $0$ is an eigenvalue if $n=\alpha = 0$.
\item[(ii)] $B_{n,\alpha }$ is compact.
\end{enumerate}
\end{prop}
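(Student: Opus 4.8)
The plan is to treat $A_{n,\alpha}$ as a matrix-valued multiplication operator and $B_{n,\alpha}$ as the composition of the (smoothing) Biot--Savart operator with multiplication by a decaying matrix. First I would record boundedness and invariance. Writing $g(r):=-in\Omega(r)-i\alpha W(r)$, the matrix $\mathcal{A}(r)$ corresponding to $A_{n,\alpha}$ in~\eqref{def_Ana} is lower triangular with diagonal $g(r)$ and off-diagonal entries $V'-V/r=r\Omega'$ and $W'$, all bounded on $(0,\infty)$ --- near $r=0$ because $\ou$ is a smooth axisymmetric field (so $\Omega,W$ have vanishing odd-order Taylor coefficients), and at infinity because $\nabla^k\ou\in L^\infty$ --- so $A_{n,\alpha}$ is bounded on $L^Q(\R^+;r\,\d r)^3\supset X_{n,\alpha}$. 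A short computation gives the commutation identity $D(A_{n,\alpha}w)=g\,Dw$, where $Dw:=\frac1r(rw_r)'+\frac{in}r w_\theta+i\alpha w_z$ is the cylindrical divergence, so $A_{n,\alpha}$ preserves $X_{n,\alpha}=\ker D$. For $B_{n,\alpha}=\mathcal{M}_{\mathcal B}\circ{\rm BS}_{n,\alpha}$, the matrix $\mathcal{B}(r)$ from~\eqref{def_Bna} is bounded and decays at spatial infinity --- its entries involve $\oo_\theta=W'$, $\oo_z=(rV)'/r$ and their derivatives, controlled by~\eqref{eq:decayassumption}, and for $\alpha=0$ the surviving entries $W'/r$, $W''-W'/r$, $((rV)'/r)'$ even decay like $r^{-\bb-1}$, hence lie in $L^2(\R^2\times\T)$ --- while Lemma~\ref{lem:biotsavartlaw} gives $\|{\rm BS}[\omega]\|_{L^Q}\lesssim\|\omega\|_{L^Q}$ for $\alpha\neq0$ and $\|{\rm BS}[\omega]\|_{L^{Q_2^*}}\lesssim\|\omega\|_{L^Q}$ for $\alpha=0$ (with the $2$D Sobolev exponent $Q_2^*:=2Q/(2-Q)$); these combine to make $B_{n,\alpha}$ bounded, and it preserves $X_{n,\alpha}$ because $L_{n,\alpha}$ and $A_{n,\alpha}$ do.

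For part (i), if $\lambda\notin\overline{{\rm range}(g)}$ then $|\lambda-g(r)|\geq\delta>0$ for all $r$, so $\lambda I-\mathcal{A}(r)$ has a lower-triangular inverse with entries bounded by $\delta^{-1}+C\delta^{-2}$ uniformly in $r$; multiplication by it is bounded on $L^Q(r\,\d r)^3$ and, by $D((\lambda-A_{n,\alpha})w)=(\lambda-g)Dw$, maps $X_{n,\alpha}$ into itself, hence $\lambda\in\varrho(A_{n,\alpha})$ and $\sigma(A_{n,\alpha})\subseteq\overline{{\rm range}(g)}$. For the reverse inclusion, since the spectrum is closed it is enough to put each $g(r_*)$, $r_*\in(0,\infty)$, into the approximate point spectrum. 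Here $g(r_*)I-\mathcal{A}(r_*)$ is nilpotent with kernel $\{w_r=0\}$, so I would build a Weyl sequence of divergence-free fields concentrated near $r_*$ and having \emph{vanishing radial component}: when $(n,\alpha)\neq(0,0)$, the constraint $Dw=0$ with $w_r\equiv0$ is the single pointwise relation $\frac{in}r w_\theta+i\alpha w_z=0$, which always admits a nonzero smooth solution. Taking such $w^{(j)}$ supported in $[r_*-1/j,r_*+1/j]$ and normalized in $L^Q(r\,\d r)$, one gets $(g(r_*)-A_{n,\alpha})w^{(j)}=(g(r_*)-g(r))w^{(j)}$ with no nilpotent contribution, so $\|(g(r_*)-A_{n,\alpha})w^{(j)}\|_{L^Q(r\,\d r)}\leq\sup_{|r-r_*|\leq1/j}|g(r)-g(r_*)|\to0$ by continuity of $g$. (For $(n,\alpha)=(0,0)$ one has $g\equiv0$ and $A_{0,0}|_{X_{0,0}}=0$, matching $\overline{{\rm range}(g)}=\{0\}$.)

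For part (ii), I would use the factorization $B_{n,\alpha}=\mathcal{M}_{\mathcal B}\circ{\rm BS}_{n,\alpha}$ together with the smoothing of ${\rm BS}$. By Lemma~\ref{lem:biotsavartlaw}, ${\rm BS}_{n,\alpha}$ sends bounded sets of $L^Q(r\,\d r)$ to bounded sets of $W^{1,Q}(\R^2\times\T)$ (if $\alpha\neq0$) or $\dot W^{1,Q}\cap L^{Q_2^*}$ (if $\alpha=0$), so given a bounded sequence $w_k$ in $X_{n,\alpha}$, a diagonal argument over an exhaustion by balls, using Rellich--Kondrachov, extracts a subsequence with $u_k:={\rm BS}_{n,\alpha}w_k\to u_\infty$ in $L^Q_{\mathrm{loc}}$. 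Because $\mathcal{B}$ is bounded with $\mathcal{B}(r)\to0$ as $r\to\infty$, the mass of $\mathcal{M}_{\mathcal B}u_k$ on $\{r>R\}$ is $O(\varepsilon(R))$ uniformly in $k$ --- bounded by $\sup_{r>R}|\mathcal{B}|\,\|u_k\|_{L^Q}$ when $\alpha\neq0$, and by $\|\mathcal{B}\|_{L^2(\{r>R\}\times\T)}\|u_k\|_{L^{Q_2^*}}$ (H\"older with exponents $2$ and $Q_2^*$) when $\alpha=0$ --- while on $\{r\leq R\}$ it converges in $L^Q$; hence $B_{n,\alpha}w_k$ is Cauchy in $L^Q$, i.e.\ $B_{n,\alpha}$ is compact.

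The main obstacle, all within part (i), is twofold: $\mathcal{A}(r)$ is non-normal, so its Jordan structure blocks any direct use of self-adjoint spectral theory --- I get around this via the explicit lower-triangular resolvent and the ``$w_r\equiv0$'' Weyl sequence that lives in the kernel of the nilpotent part of $g(r_*)I-\mathcal{A}(r_*)$ --- and every construction has to stay inside the divergence-free subspace $X_{n,\alpha}$, which is exactly what the commutation identity $D((\lambda-A_{n,\alpha})w)=(\lambda-g)Dw$ is for. Part (ii) is conceptually standard (smoothing composed with decaying multiplication is compact); the only delicate point there is the function-space bookkeeping in the two-dimensional mode $\alpha=0$, where the Biot--Savart velocity is controlled only in $L^{Q_2^*}$ and one must invoke the extra power of decay of the coefficient matrix.
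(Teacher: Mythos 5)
Your proposal is correct and follows essentially the same route as the paper — $A_{n,\alpha}$ as a matrix multiplication operator whose resolvent is explicit away from $\overline{\mathrm{range}(g)}$, Weyl sequences concentrated at $r_*$ for the reverse inclusion, and compactness of $B_{n,\alpha}$ via the derivative gain in the Biot--Savart law (Lemma~\ref{lem:biotsavartlaw}) combined with decay of the multiplier from~\eqref{eq:decayassumption}. The paper's proof is a short sketch that leaves out almost all of the checks you supply, and cites~\cite{gs_spectral} for the mechanism behind (ii); so none of your added detail is a departure from the intended argument.

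One place where you actually improve on the paper's sketch rather than merely fill it in: for the reverse inclusion in (i), the paper proposes ``approximate $\delta_{r_0}(r) e_z$,'' but a vorticity of pure $e_z$ type in the $(n,\alpha)$ mode has divergence $i\alpha w_z$, so it lies in $X_{n,\alpha}$ only when $\alpha=0$. Your fix — taking $w_r\equiv 0$ and solving the single pointwise constraint $\tfrac{in}{r}w_\theta+i\alpha w_z=0$, which both kills the nilpotent part of $g(r_*)I-\mathcal{A}(r_*)$ and keeps the Weyl sequence inside the divergence-free subspace — is exactly the right repair, and the accompanying commutation identity $D((\lambda-A_{n,\alpha})w)=(\lambda-g)\,Dw$ is the cleanest way to see that the resolvent preserves $X_{n,\alpha}$. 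The bookkeeping in (ii), splitting the $\alpha=0$ mode via $L^{Q_2^*}$ and the $L^2$-decay of $\mathcal{B}$, is consistent with Lemma~\ref{lem:biotsavartlaw} and with the exponent arithmetic $\tfrac1Q=\tfrac12+\tfrac1{Q_2^*}$.
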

\begin{proof}
It is direct from~\eqref{def_Ana} that $A_{n,\alpha}$ is bounded. To see that $B_{n,\alpha}$ is bounded, we can use that it represents the terms $\bar{\omega} \cdot \nabla u - u \cdot \nabla \bar{\omega}$, which are bounded. %If $\alpha \neq 0$, then we have the estimates $\| \psi \|_{H^2} + \| u \|_{H^1} \lesssim \| \omega \|_{L^2}$ in inhomogeneous spaces ($\psi$ is the stream function), which yields that $B_{n,\alpha}$ is bounded. If $\alpha = 0$, then we must utilize that $n \neq \pm 1$. Then the Hardy inequality~\eqref{eq:2DHardyformeanzero} and that the matrix in~\eqref{def_Bna} is bounded above in magnitude by $Cr^{-1}$ yield that $B_{n,\alpha}$ is bounded.% \dacomment{could check}

For (i), we observe that $A_{n,\alpha}$ is multiplication operator by a bounded matrix function. Clearly, $\lambda - A_{n,\alpha}$ is invertible when $\lambda$ is away from the range of $-in \Omega - i\alpha W$, since the determinant of $\lambda - A_{n,\alpha}$ will be bounded away from zero. When $\lambda = -in \Omega(r_0) - i\alpha W(r_0)$ is in the range, we simply approximate $\delta_{r_0}(r) e_z$ by an $L^Q$ function. Finally, we recall that the spectrum is closed. 

As for (ii), the key observation is that $u$ gains a derivative according to Lemma~\ref{lem:biotsavartlaw} and the multiplier is localized according to~\eqref{eq:decayassumption}; cf.~\cite[Proposition~6.1]{gs_spectral} and \cite[Proposition~2.1]{gs_spectral}. %, to obtain compactness. 
\end{proof}

Because compact perturbations do not change the essential spectrum, we deduce from Proposition~\ref{prop_spec} that
\eqnb\label{spectral_info}
\sigma (L_{n,\alpha }) = \sigma_{\rm ess}(L_{n,\alpha}) \cup \sigma_{\rm disc}(L_{n,\alpha}),
\eqne
where the essential spectrum of $\LL$, $\sigma_{\rm ess}(L_{n,\alpha})$, is contained in the imaginary axis, and the discrete spectrum, $\sigma_{\rm disc}(L_{n,\alpha})$,  consists of isolated eigenvalues of finite algebraic multiplicity. In particular, for every open neighborhood $O$ of $\sigma_{\rm ess}(L_{n,\alpha})$, we have that $\sigma_{\rm disc}(L_{n,\alpha}) \cap O$ is finite.

We now prove that stable/unstable eigenfunctions are smooth.

\begin{lemma}[Smoothness of stable/unstable eigenfunctions]
\label{lem:smoothnessofefns}
   If $\omega \in X_{n,\alpha} \cap D(\mathcal{L})$ is an unstable eigenfunction of $\LL$, i.e.,
   \eqnb\label{omega_unst}
    (\lambda - \LL) \omega = 0 
   \eqne
   for some $\lambda \in \C$ with $\re \, \lambda >0$, then $\omega \in L^Q \cap H^k$ for every $k \geq 0$. 
\end{lemma}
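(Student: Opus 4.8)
The strategy is a standard elliptic bootstrap: rewrite the eigenvalue equation $(\lambda-\LL)\omega=0$ as $\ou\cdot\nabla\omega = \lambda\omega + \wL\omega$ and use that the right-hand side is one degree smoother than $\omega$ (since $\wL\omega$ involves $u={\rm BS}[\omega]$, which gains a derivative by Lemma~\ref{lem:biotsavartlaw}), then feed this back into the transport operator $\ou\cdot\nabla$ to upgrade the regularity of $\omega$ itself. Since we are working in the fixed mode $X_{n,\alpha}$, the operator $\ou\cdot\nabla$ acts on $w(r)$ as $\left(in\tfrac{V}{r}+i\alpha W\right)w(r) + (\text{lower order from }B_{n,\alpha})$, which is essentially multiplication by the smooth bounded function $g(r):=in\Omega(r)+i\alpha W(r)$, plus zeroth-order terms. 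The crucial point, already recorded in the proof of Proposition~\ref{prop_spec}(i), is that when $\re\,\lambda>0$ the symbol $\lambda - g(r)$ never vanishes — indeed $|\lambda-g(r)|\geq \re\,\lambda>0$ — so division by it is a bounded operation that preserves $L^q$ integrability and, more importantly, preserves Sobolev regularity because $g$ is smooth with all derivatives bounded (by \textbf{Assumption~\ref{ass}} / the hypotheses on $\ou$).

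**Key steps.** First I would isolate the leading transport term: from~\eqref{def_Ana}, \eqref{def_Bna}, \eqref{BS_in_Xna} one has, in the $X_{n,\alpha}$ representation,
\[
\left(\lambda - g(r)\right) w = -\tilde{B}_{n,\alpha} w + (\text{zeroth-order terms in } w),
\]
where $\tilde B_{n,\alpha}$ is the part of $\LL$ built from $u = {\rm BS}[\omega]$ and the lower-triangular zeroth-order couplings (the $-V'+V/r$, $-W'$, etc. entries), all with smooth bounded coefficients. Dividing by $\lambda-g(r)$, which is smooth, bounded below in modulus, and has bounded derivatives, we get $w = (\lambda-g)^{-1}\big(-\tilde B_{n,\alpha}w + \dots\big)$. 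Second, I would run the bootstrap: $w\in L^Q$ implies, by Lemma~\ref{lem:biotsavartlaw}, that $u\in W^{1,Q}$ locally, hence $\tilde B_{n,\alpha}w\in W^{1,Q}$ away from $r=0$ (the decay hypothesis~\eqref{eq:decayassumption} and boundedness of $\nabla^k\ou$ handle the coefficients; near $r=0$ one uses smoothness of the coefficients and the regularity structure of axisymmetric-mode functions); multiplying by the smooth factor $(\lambda-g)^{-1}$ shows $w\in W^{1,Q}_{\rm loc}$. Iterating, $w\in W^{k,Q}_{\rm loc}$ for all $k$, hence $\omega\in C^\infty$ away from the axis; combined with the $L^Q\cap L^2$ membership (which follows from interpolating the improving integrability of ${\rm BS}$, or simply from the fact that higher $W^{k,Q}$ control plus decay gives $L^2$) we conclude $\omega\in L^Q\cap H^k$ for all $k$. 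The decay at spatial infinity needed to put the full $\omega$ (not just $P_{\neq0}\omega$) in $H^k$ comes from the $a~priori$ $L^Q$ bound~\eqref{eq112} together with the elliptic gain, using $Q\in(1,2)$ and the Sobolev exponent $p_2^*$ in~\eqref{eq:Lpestimatein2d}.

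**Main obstacle.** The genuinely delicate point is the behavior at the symmetry axis $r=0$. The cylindrical representation has apparent singularities ($V/r$, $1/r\,\p_\theta$, the $\p_r + 1/r$ in the Biot-Savart matrix~\eqref{BS_in_Xna}), and one must verify that a weak eigenfunction $\omega\in L^Q$, a priori only a distribution near the axis, is actually smooth there. I would handle this by passing to Cartesian coordinates on $\R^2\times\T$, where $\ou$ is smooth (by the parity conditions $\Omega^{(2k+1)}(0)=W^{(2k+1)}(0)=0$ in \textbf{Assumption~\ref{ass}}, extended to the present setting), the Biot-Savart operator is a standard $-1$-order pseudodifferential operator plus the $2$D-type part, and the equation $(\lambda-\LL)\omega=0$ becomes a genuine first-order transport equation $\ou\cdot\nabla\omega = \lambda\omega+\wL\omega$ with smooth drift and right-hand side that is one derivative smoother than $\omega$. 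Since $\re\,\lambda>0$ makes $\lambda - \ou\cdot\nabla$ "elliptic in the time-frequency direction," a commutator/difference-quotient argument (or directly the division by $\lambda - g$ after taking the Fourier series in $z,\theta$, which is legitimate globally including across the axis once everything is Cartesian) upgrades $\omega$ from $H^s$ to $H^{s+1}$; iterating gives $\omega\in\bigcap_k H^k$, and then $L^\infty$ and smoothness follow from Sobolev embedding on $\R^2\times\T$. I expect the bookkeeping near $r=0$ — confirming the coefficients and the Biot-Savart kernel are genuinely smooth there and that no spurious boundary terms appear in the integration by parts — to be the part requiring the most care, but no new idea beyond the transport-plus-elliptic-gain bootstrap is needed.
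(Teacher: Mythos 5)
Your proposal is essentially the paper's proof: invert the multiplication operator coming from $\bar u\cdot\nabla - (\cdot)\cdot\nabla\bar u$ (lower triangular, diagonal $\lambda+i n\Omega+i\alpha W$, determinant nonvanishing since $\re\lambda>0$), observe that the remaining $\bar\omega\cdot\nabla u - u\cdot\nabla\bar\omega$ term is a zeroth-order matrix multiplication of $u$ in the $(n,\alpha)$ mode (so it gains one derivative from $\omega$ via Biot--Savart), bootstrap, and pass to Cartesian coordinates to see that the coefficients $\Omega'/r$, $W'/r$, etc.\ are genuinely smooth across the axis by the parity conditions in \textbf{Assumption~\ref{ass}}. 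The one place the paper is tighter than your sketch is that it inverts the \emph{full} lower-triangular matrix $\lambda - M_{n,\alpha}$ at once and explicitly isolates the identity $\bar\omega\cdot\nabla v = K_{n,\alpha}v$, so the right-hand side after inversion contains only $u$ (no residual zeroth-order $w$-terms); your phrasing ``$\tilde B_{n,\alpha}w\in W^{1,Q}$ since $u\in W^{1,Q}$'' as stated implicitly requires that cleanup, either by inverting the whole matrix or by running the bootstrap row-by-row through the nilpotent coupling.
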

%Recall that $Q$ is defined in \eqref{def_of_Q_nonlin} based on the assumed decay \eqref{eq:decayassumption} of the vortex column \eqref{vortex_col_repeat}.
We point out that $\omega$ solves \eqref{omega_unst} if and only if $\omega \ee^{\lambda t}$ solves the linearized Euler equations~\eqref{linearization}.
\begin{proof}
Writing $\omega = (w_r(r) e_r + w_\theta (r) e_\theta + w_z (r) e_z ) \ee^{in\theta + i\alpha z}$ we calculate that 
\[
\begin{split}
\overline{u}\cdot \nabla \omega &= (Ve_\theta + W e_z )\cdot \nabla ((w_r e_r + w_\theta e_\theta + w_z e_z )\ee^{in\theta +i\alpha z})\\
&= (V \frac{\p_\theta }r + W\p_z ) ((w_r e_r + w_\theta e_\theta + w_z e_z )\ee^{in\theta +i\alpha z}) \\
&= (in \frac{V}r + i\alpha W ) \omega +\frac{V}{r} (-w_\theta e_r + w_r e_\theta )\ee^{in\theta +i\alpha z}\\
&= (in \frac{V}r + i\alpha W ) \omega +\frac{V}{r^3} (-(\omega \cdot x^\perp ) x + (\omega \cdot x ) x^\perp)
\end{split}
\]
and %that
\[
\begin{split}
\omega \cdot \nabla \ou & = \ee^{in\theta + i\alpha z} (w_r e_r + w_\theta e_\theta + w_z e_z ) \cdot \nabla (V e_\theta + W e_z)\\
&=  \ee^{in\theta + i\alpha z} (w_r \p_r + w_\theta \frac{\p_\theta }{r} (V e_\theta + W e_z)\\
&=  \ee^{in\theta + i\alpha z} \left( w_r (V' e_\theta + W' e_z) - w_\theta \frac{V}{r} e_r \right) \\
&= \frac{V'}{r^2} (\omega \cdot x ) x^\perp + \frac{W'}{r} (\omega \cdot x ) e_z - \frac{V}{r^3} (\omega \cdot x^\perp ) x,
\end{split}
\]
where we use the two-dimensional notation $(x,z) \in \R^2 \times \T$. %used the notation $x=(x,y,0)$, $x^\perp = (-y,x,0)$.
Thus,
\[\begin{split}
  \lambda \omega + \bar{u} \cdot \nabla \omega - \omega \cdot \nabla \bar{u} &= \lambda \omega + \left( -in \frac{V}r + i\alpha W \right) \omega + \frac{V}{r^3} (\omega \cdot x )x^\perp - \left( \frac{V'}{r^2} (\omega \cdot x ) x^\perp + \frac{W'}{r} (\omega\cdot x )e_z \right)\\
  &= \lambda \omega + \left( -in \frac{V}r + i\alpha W \right) \omega - \frac{\Omega'}{r} (\omega \cdot x )x^\perp -  \frac{W'}{r} (\omega\cdot x )e_z \\
  &= (\lambda - M_{n,\alpha}) \omega \, ,
  \end{split} 
\]
where
\[
- M_{n,\alpha} \coloneqq \begin{pmatrix}
in \frac{V}r + i\alpha W + \frac{\Omega'}r xy & \frac{\Omega'}r y^2 &0 \\
-\frac{\Omega'}r x^2 & in \frac{V}r + i\alpha W - \frac{\Omega'}r xy &0\\
-\frac{W'}r x & - \frac{W'}r y &  in \frac{V}r +i\alpha W
\end{pmatrix}.
\]
Note that $M_{n,\alpha} \in W^{k,\infty } (\R^3)$ for every $k\geq 0$. Moreover,
\[
\det \, (\lambda - M_{n,\alpha}) = \left( \lambda - in \frac{V}r +i\alpha W \right)^3 \, ,
\]
as is easily seen from rewriting in cylindrical coordinates to obtain the lower diagonal matrix $\lambda - A_{n,\alpha}$. This shows that also $(\lambda - M)^{-1}\in W^{k,\infty }(\R^3 )$ for all $k\geq 0$, since both the cofactor matrix and $\det (\lambda - M_{n,\alpha})$ are smooth and $| \det (\lambda - M_{n,\alpha}) | \geq \re \lambda > 0$. Multiplying \eqref{omega_unst} on the left by $(\lambda - M_{n,\alpha})^{-1}$, we thus obtain
\[
\omega = (\lambda - M_{n,\alpha})^{-1} \left( \oo \cdot \nabla u - u \cdot \nabla \oo \right) \, .
\] 
In order to iterate regularity of $\omega$, we must be careful with the term $\oo \cdot \nabla u$, since at a first glance $\nabla u $ is of the same order as $\omega$. To this end, we observe that 
\[
\oo \cdot \nabla v = K_{n,\alpha} v
\]
for every $v\in X_{n,\alpha}$, where
\[
K_{n,\alpha} \coloneqq \begin{pmatrix}
in \frac{W'}r + i\alpha \left( V'+ \frac{V}r \right) &\frac{W'}{r} &0 \\
-\frac{W'}r & in \frac{W'}r + i\alpha \left( V'+ \frac{V}r \right)  & 0\\
0&0& in \frac{W'}r + i\alpha \left( V'+ \frac{V}r \right) 
\end{pmatrix}.
\]
We thus have 
\[
\omega = (\lambda - M_{n,\alpha})^{-1} (K_{n,\alpha} u - u\cdot \nabla \oo ),
\]
and so, since $K_{n,\alpha} \in W^{k,\infty } (\R^3)$ for every $k\geq 0$, we can pass derivatives through this identity and iterate regularity to obtain the claim. %If $\alpha \neq 0$, then the conclusion is nearly immediate. If $\alpha = 0$ and $n \neq \pm 1$, then we utilize the Hardy inequality to estimate the terms in which no derivatives fall on $u$.
\end{proof}

We define the spectral bound 
\eqnb\label{sna}
s_{n,\alpha}=s(L_{n,\alpha}) \coloneqq \sup \{ \re\, \lambda : \lambda \in \sigma(L_{n,\alpha}) \}.
\eqne
Thus, if $s_{n,\alpha} > 0$, then our spectral information \eqref{spectral_info} implies that $s_{n,\alpha}$ is attained by an eigenvalue, that is, there exists $\lambda \in \sigma(L_{n,\alpha})$ such that $\re \,\lambda = s(L_{n,\alpha})$. Let
\begin{equation}
    \label{eq:Sdef}
    S \coloneqq \sup_{n,\alpha} s_{n,\alpha}  .
\end{equation}
Note that $S \leq \Lambda < +\infty$, where $\Lambda$ was defined in~\eqref{eq:Lambdaspectralbound}.

\subsection{Semigroup bounds for $L_{n,\alpha }$}\label{sec_semigroup}
Here we prove semigroup estimates on $\ee^{tL_{n,\alpha}}$ as follows. 
\begin{lemma}[Semigroup estimates on $\ee^{tL_{n,\alpha }}$]\label{lem:semigroupestimate}
If a vortex column \eqref{vortex_col_repeat} satisfies the decay assumption \eqref{eq:decayassumption}, then
    \begin{equation}
        \label{eq:Abound}
        \| \ee^{t A_{n,\alpha}} \|_{L^p \to L^p} \lesssim_{n,\alpha,p} (1+t) \, , \qquad \text{ for all } p \in [1,+\infty] \, , \; t \geq 0 \, .
    \end{equation}
    Moreover, for every $k \geq 0$ the semigroup $\ee^{tL_{n,\alpha}} \colon X_{n,\alpha} \subset L^Q \cap H^k \to L^Q \cap H^k$ admits the estimate
    \begin{equation}
        \label{eq:fullLbound}
      \| \ee^{tL_{n,\alpha}} \|_{L^Q \cap H^k \to L^Q \cap  H^k} \lesssim_{n,\alpha,\delta,C_0,\bb,k} \ee^{(s(L_{n,\alpha}) + \delta) t} \, ,\qquad \text{ for all } \delta >0 \, , \; t\geq 0 \, .
    \end{equation}
\end{lemma}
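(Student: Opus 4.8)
The plan is to prove the two semigroup bounds separately, then combine them. For \eqref{eq:Abound}, recall from \eqref{def_Ana} that $-A_{n,\alpha}$ is multiplication by a fixed lower-triangular matrix $M(r)$ whose diagonal entries all equal $i(n\Omega(r)+\alpha W(r))$, a purely imaginary bounded function. Hence $\ee^{tA_{n,\alpha}}$ acts as pointwise-in-$r$ multiplication by $\exp(tM(r))$; since $M(r)$ is triangular with purely imaginary diagonal and bounded off-diagonal entries $V'-V/r$ and $W'$ (bounded by the hypothesis $\nabla^k\bar u\in L^\infty$), the matrix exponential $\exp(tM(r))$ has entries bounded by a polynomial in $t$ of degree at most $2$ (in fact degree $1$, since the nilpotent part $N(r)=M(r)-i(n\Omega+\alpha W)I$ satisfies $N^3 = 0$: the strictly-lower-triangular $3\times 3$ structure forces $N^2$ to have only a $(3,1)$ entry and $N^3=0$). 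Writing $\exp(tM(r)) = \ee^{it(n\Omega+\alpha W)}(I + tN(r) + \tfrac{t^2}{2}N(r)^2)$, the operator norm on $L^p$, $p\in[1,\infty]$, of multiplication by this matrix is bounded by $\sup_r (\|I\|+t\|N(r)\|+\tfrac{t^2}{2}\|N(r)^2\|) \lesssim_{n,\alpha,p} (1+t^2)$; a slightly more careful bookkeeping using that the $(3,1)$ entry of $N^2$ is $W'(r)(V'(r)-V(r)/r)$, which need not vanish, actually only gives $(1+t)^2$, but since the statement asks for $\lesssim(1+t)$ one uses that the diagonal of $N$ is zero so $N^2$ appears only in the $(3,1)$ slot — hmm, to get exactly $(1+t)$ one should note the off-diagonal block structure decouples: the $w_\theta$ and $w_z$ equations are driven only by $w_r$, which itself satisfies a scalar ODE, so each component grows at most linearly. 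I would therefore organize the computation as: solve the $w_r$-ODE first (pure rotation, norm-preserving), then Duhamel into $w_\theta$ and $w_z$ to pick up one factor of $t$, giving \eqref{eq:Abound} directly with linear growth.

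For \eqref{eq:fullLbound}, the strategy is to exploit that $B_{n,\alpha}$ is compact (Proposition~\ref{prop_spec}(ii)) and smoothing. First I would establish the $L^Q$ estimate. By \eqref{spectral_info}, $\sigma(L_{n,\alpha})$ in the half-plane $\{\re\lambda > \delta/2\}$ consists of finitely many eigenvalues of finite algebraic multiplicity, all with $\re\lambda \le s(L_{n,\alpha})$; moreover the resolvent $(\lambda - L_{n,\alpha})^{-1}$ is uniformly bounded on the vertical line $\{\re\lambda = s(L_{n,\alpha})+\delta\}$ because $L_{n,\alpha} = A_{n,\alpha} + B_{n,\alpha}$ with $A_{n,\alpha}$ generating a polynomially bounded group (so its resolvent decays like $1/|\lambda|$ off a thin strip around $i\R$, by a Gearhart–Prüss / Neumann-series argument using \eqref{eq:Abound}) and $B_{n,\alpha}$ compact, so that $(\lambda - A_{n,\alpha} - B_{n,\alpha})^{-1} = (I - (\lambda-A_{n,\alpha})^{-1}B_{n,\alpha})^{-1}(\lambda-A_{n,\alpha})^{-1}$ with the first factor uniformly bounded for $|\im\lambda|$ large (since $(\lambda-A_{n,\alpha})^{-1}B_{n,\alpha}\to 0$ in operator norm, compactness plus resolvent decay) and for $|\im\lambda|$ bounded by analyticity away from the finitely many eigenvalues. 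With this uniform resolvent bound on a vertical line to the right of the spectrum, the inverse Laplace transform representation $\ee^{tL_{n,\alpha}} = \frac{1}{2\pi i}\int_{\re\lambda = s+\delta} \ee^{\lambda t}(\lambda - L_{n,\alpha})^{-1}\,d\lambda$ — made rigorous, e.g., by writing $\ee^{tL_{n,\alpha}} = \ee^{tA_{n,\alpha}} + \int_0^t \ee^{(t-\tau)A_{n,\alpha}}B_{n,\alpha}\ee^{\tau L_{n,\alpha}}\,d\tau$ (Duhamel), iterating, and using that each term gains smallness/decay from $B_{n,\alpha}$ — yields $\| \ee^{tL_{n,\alpha}} \|_{L^Q\to L^Q} \lesssim_{n,\alpha,\delta} \ee^{(s(L_{n,\alpha})+\delta)t}$.

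To upgrade to $L^Q\cap H^k$, I would use a bootstrap: $\ee^{tL_{n,\alpha}}\omega_0$ solves $\partial_t\omega = A_{n,\alpha}\omega + B_{n,\alpha}\omega$, and $A_{n,\alpha}$ is a zeroth-order (multiplication) operator with $W^{k,\infty}$ coefficients (by $\nabla^k\bar u\in L^\infty$), hence bounded on $H^k$ with the \emph{same} polynomial-in-$t$ semigroup bound \eqref{eq:Abound} now in the $H^k$ norm — this is where the assumption $\nabla^k\bar u\in L^\infty$ for all $k$ is essential. The operator $B_{n,\alpha}$ is bounded on $H^k$: by Lemma~\ref{lem:biotsavartlaw}, $\omega\mapsto u = \mathrm{BS}[\omega]$ gains one derivative, and the multiplier matrices in \eqref{def_Bna} (namely $W'/r$, $(rV)'/r$ and their derivatives, which are smooth and, by \eqref{eq:decayassumption}, decay) are $W^{k,\infty}$; hence $B_{n,\alpha}\colon H^k\to H^{k+1}\hookrightarrow H^k$, in particular $B_{n,\alpha}$ maps $H^k$ boundedly into $H^k$. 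Then Duhamel, $\ee^{tL_{n,\alpha}}\omega_0 = \ee^{tA_{n,\alpha}}\omega_0 + \int_0^t \ee^{(t-\tau)A_{n,\alpha}}B_{n,\alpha}\ee^{\tau L_{n,\alpha}}\omega_0\,d\tau$, combined with the already-established $L^Q$ bound and induction on $k$, gives a Gr\"onwall-type inequality $\|\ee^{tL_{n,\alpha}}\omega_0\|_{H^k}\lesssim (1+t)\|\omega_0\|_{H^k} + \int_0^t(1+t-\tau)\|\ee^{\tau L_{n,\alpha}}\omega_0\|_{H^k}\,d\tau + (\text{lower-order } H^{k-1}\text{ terms controlled by the inductive hypothesis})$, and solving this yields \eqref{eq:fullLbound}.

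The main obstacle, I expect, is establishing the uniform-in-$\im\lambda$ resolvent bound on a vertical line to the right of $\sigma(L_{n,\alpha})$ — equivalently, ruling out unstable \emph{growth of the continuous/essential spectrum} beyond the exponential rate $s(L_{n,\alpha})+\delta$. The essential spectrum sits on the imaginary axis (Proposition~\ref{prop_spec}(i) and \eqref{spectral_info}), so the danger is only the finitely many discrete eigenvalues — but one must confirm there is no Jordan block contributing a polynomial-times-exponential term exceeding $\ee^{(s+\delta)t}$, which is automatic since any polynomial factor is absorbed by the extra $\ee^{\delta t}$. The genuinely delicate point is the decay of $(\lambda - A_{n,\alpha})^{-1}$ as $|\im\lambda|\to\infty$ along $\{\re\lambda = s+\delta\}$: because $A_{n,\alpha}$ generates only a \emph{polynomially} (not uniformly) bounded group, its resolvent on a vertical line in the open right half-plane decays like $C_{n,\alpha}/|\lambda|$ with a constant depending on $n,\alpha$ — acceptable here since $n,\alpha$ are fixed — which then makes $(\lambda-A_{n,\alpha})^{-1}B_{n,\alpha}$ small for $|\im\lambda|$ large and closes the Neumann series. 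I would present this decay estimate explicitly via the triangular structure of $\exp(tM(r))$ and an integration by parts in $t$ in the Laplace transform $(\lambda-A_{n,\alpha})^{-1} = \int_0^\infty \ee^{-\lambda t}\ee^{tA_{n,\alpha}}\,dt$, using $\re\lambda = s+\delta > 0$ to beat the polynomial growth.
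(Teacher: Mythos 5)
Your treatment of \eqref{eq:Abound} eventually lands on the right idea (linear growth via the strictly block structure), though it takes a detour: for the matrix $N$ coming from \eqref{def_Ana} one has $N^2=0$ outright, since $N$ has nonzero entries only in the $(2,1)$ and $(3,1)$ slots, while its first row vanishes; the product $N^2$ therefore has no nonzero entry at all, including at $(3,1)$. The paper states exactly this in \eqref{eq:etA}. Your ``solve $w_r$ first, then Duhamel into $w_\theta,w_z$'' reformulation is an equivalent way to see linear growth.

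The second part contains two genuine problems. First, the entire Gearhart--Pr\"uss/Neumann-series/inverse-Laplace machinery you propose for the $L^Q$ semigroup bound is unnecessary and in fact unsound in this setting: Gearhart--Pr\"uss is a Hilbert-space theorem (it uses Plancherel), and it fails in general in $L^Q$ for $Q\neq 2$, so ``uniform resolvent bound on a vertical line $\Rightarrow$ growth bound'' cannot be invoked here. The paper's argument is a one-liner that sidesteps all of this: $A_{n,\alpha}$ and $B_{n,\alpha}$ are both bounded operators (the former is multiplication by a bounded matrix, the latter is a bounded integral operator), hence $L_{n,\alpha}$ is bounded, generates a uniformly continuous semigroup, and for such semigroups the growth bound equals the spectral bound in any Banach space. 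This replaces the whole resolvent-estimate apparatus with a citation.

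Second, the Gr\"onwall step you outline for $H^k$ would not produce the claimed rate. The inequality you write,
\begin{equation}
\|\ee^{tL_{n,\alpha}}\omega_0\|_{H^k}\lesssim (1+t)\|\omega_0\|_{H^k} + \int_0^t(1+t-\tau)\|\ee^{\tau L_{n,\alpha}}\omega_0\|_{H^k}\,d\tau + \cdots \, ,
\end{equation}
if closed by Gr\"onwall as stated, gives a rate depending on the operator norms of $A_{n,\alpha}$ and $B_{n,\alpha}$, not on $s(L_{n,\alpha})$. The point of \eqref{eq:fullLbound} is precisely that the $H^k$ growth rate matches the $L^Q$ spectral bound, and this requires the paper's $\varepsilon$-splitting: one shows
\begin{equation}
\|B_{n,\alpha}\ee^{\tau L_{n,\alpha}}\omega_0\|_{L^2} \leq C_\varepsilon\|\ee^{\tau L_{n,\alpha}}\omega_0\|_{L^Q} + \varepsilon C\|\ee^{\tau L_{n,\alpha}}\omega_0\|_{L^2}\, ,
\end{equation}
controls the first term by the already-established $L^Q$ bound $\ee^{(s_{n,\alpha}+\delta)\tau}$, and absorbs the $\varepsilon$ term into the left-hand side after taking a supremum weighted by $\ee^{-(s_{n,\alpha}+\delta)\tau}$. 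Without this absorption, one cannot recover the rate $s(L_{n,\alpha})+\delta$. Your claim that $B_{n,\alpha}\colon H^k\to H^{k+1}\hookrightarrow H^k$ is bounded also needs care: the Biot--Savart gain is from $L^Q$ to $W^{1,Q}$ on $\R^2\times\T$, and converting this into an $H^k$-to-$H^k$ bound without the $L^Q$-side ``help'' is not straightforward; the paper instead estimates $B_{n,\alpha}$ as a map $L^Q\to L^2$ up to an $\varepsilon$-loss.
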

Before we prove the lemma, we note that, since $L_{n,\alpha}$ is a bounded operator (recall~\eqref{def_Ana}--\eqref{def_Bna}), it generates a \emph{uniformly continuous} semigroup $t \mapsto \ee^{ tL_{n,\alpha}}$, $t \geq 0$. Moreover, for such groups the growth bound equals the spectral bound (see~\cite[Chapter IV, Corollary 2.4, p. 252]{engel_nagel}, for example), i.e. $\omega_0(L_{n,\alpha}) = s(L_{n,\alpha})$. In other words,
\begin{equation}
    \label{eq:semigroupest}
    \| \ee^{tL_{n,\alpha}} \|_{X_{n,\alpha} \subset L^Q \to L^Q } \lesssim_{n,\alpha,\delta} \ee^{(s(L_{n,\alpha}) + \delta) t} \, , \quad \delta > 0 \, .
\end{equation}
Notice that the implicit constant in~\eqref{eq:semigroupest} may depend on $n,\alpha$. This is because, while the resolvents $R(\lambda,L_{n,\alpha})$ may exist for all $n,\alpha$, the abstract theory does not guarantee \emph{uniform resolvent estimates} with respect to $n,\alpha$. This is the reason why we cannot say $S = \Lambda$ and why we assume that $\mathcal{L}$ has an unstable eigenvalue, not merely unstable spectrum, to obtain nonlinear instability. For columnar vortices which satisfy Rayleigh's criterion and ``should" be stable, this difficulty is also remarked in~\cite{gs_spectral} and dealt with in~\cite{gs_linear}.

The point of Lemma~\ref{lem:semigroupestimate} is to obtain semigroup estimates not only in $L^Q$, which we already know from~\eqref{eq:semigroupest}, but also in $H^k$ for arbitrary $k$. %Here, $L^Q$ is useful for ensuring that the velocity field belongs to $L^\infty$ when $\alpha = 0$.
Moreover, Lemma~\ref{lem:semigroupestimate} yields the same growth bound in $L^Q\cap H^k$ as the $L^Q$ bound \eqref{eq:semigroupest} above.
\begin{proof}[Proof of Lemma~\ref{lem:semigroupestimate}.]

We first show~\eqref{eq:Abound}. This semigroup is given explicitly by the matrix exponential of $A_{n,\alpha}(r)$. By the structure $A_{n,\alpha}(r) = \text{const.} \times I + N$, where $N^2 = 0$, we have
\eqnb\label{eq:etA}
    \ee^{tA_{n,\alpha}} = \ee^{-it(n\Omega+\alpha W)} \left( I + t\begin{bmatrix} 0 & & \\
    V'+V/r & 0 & \\
    W' & & 0
    \end{bmatrix} \right) ,
\eqne
which gives \eqref{eq:Abound}.

Second, we prove~\eqref{eq:fullLbound}. Write $\omega = \ee^{tL_{n,\alpha}} \omega_0$ with $\omega_0 \in X_{n,\alpha} \ee^{i(\alpha z + n \theta)} \cap H^k$. It is not difficult to show, using \eqref{eq:etA} and smoothness of $\ou$, that $\omega \in L^Q \cap H^k$ for every $k\geq 0$. We thus focus only on the growth bound \eqref{eq:fullLbound}. We follow a bootstrapping procedure.

To begin, we consider the equation for $\omega = \ee^{L_{n,\alpha }t} \omega_0$ in cylindrical variables, namely,
\begin{equation}
    \p_t \omega + A_{n,\alpha}\omega + B_{n,\alpha} u = 0 \, ,
\end{equation}
where $A_{n,\alpha}$, $B_{n,\alpha}$ are defined in \eqref{def_Ana}--\eqref{def_Bna}.
%with some (quite mild) abuse of notation (which will have to be fixed). Our first observation is that $\| \ee^{-tA_{n,\alpha}} \|_{L^p \to L^p} \lesssim_p \ee^{t\delta}$ for all $\delta > 0$ and all $p \in (1,\infty)$. (DA will add proof of this.)
We estimate $B_{n,\alpha}$, which encodes the terms $- \bar{\omega} \cdot \nabla u + u \cdot \nabla \bar{\omega}$. We have
\begin{equation}
    \| \oo \cdot \nabla u \|_{L^2} \lesssim \| \oo \|_{L^p} \| \nabla u \|_{L^Q} \lesssim \| \omega \|_{L^Q}
\end{equation}
with $1/2 = 1/p + 1/Q$. %\dacomment{This isn't the easiest way. Hardy easier. IOU}
Next, we consider the $u \cdot \nabla \oo$ term in two parts. First,
% \begin{equation}
%     \| u \|_{L^{p^*_2}} \lesssim \| \omega \|_{L^p} \lesssim \| \omega \|_{L^Q}^\theta \| \omega \|_{L^2}^{1-\theta} \, ,
% \end{equation}
% where $1/p = \theta/Q + (1-\theta)/2$. Recall that $p^* = 2p/(2-p)$ when $\alpha = 0$, that is, $1-2/p = -2/p^*$. 
%(Probably would be good to track the $\T$ in the $z$ variable more explicitly.)
% Notably, there exists $p^*_2$ such that, defining $r$ via $1/Q = 1/p^*_2 + 1/r$, $\bar{\omega} \in L^r$, and we estimate
\begin{equation}
    \| {\rm BS}[P_0 \omega] \cdot \nabla \oo \|_{L^2} \lesssim \| {\rm BS}[P_0 \omega] \|_{L^{Q^*_2}} \| \nabla \oo \|_{L^p} \lesssim \| \omega \|_{L^Q} \, ,
\end{equation}
where $1/2 = 1/Q^*_2 + 1/p$. Second, suppose that $Q \in (1,6/5)$. (When $Q \in [6/5,2)$, then $W^{1,Q} \hookrightarrow L^2$, which simplifies this estimate.) Then 
\begin{equation}
	\| {\rm BS}[P_{\neq 0} \omega] \cdot \nabla \oo \|_{L^{6/5}} \lesssim \| {\rm BS}[P_{\neq 0} \omega] \|_{L^Q}^\theta \| {\rm BS}[P_{\neq 0} \omega] \|_{L^2}^{1-\theta} \| \nabla \oo \|_{L^\infty} \, ,
\end{equation}
where $\theta/Q + (1-\theta)/2 = 5/6$. We write Duhamel's formula:
\begin{equation}
    \omega = \ee^{-tA_{n,\alpha}} \omega_0 + \int_0^t \ee^{-(t-s)A_{n,\alpha}} (\oo \cdot \nabla u - u \cdot \nabla{\oo}) \, \d s \, .
\end{equation}
The term arising from the initial data evidently satisfies the desired estimates. We have
\begin{equation}
\begin{aligned}
      &\left\| \int_0^t \ee^{-(t-s)A_{n,\alpha}} (\oo \cdot \nabla u - u \cdot \nabla{\oo}) \d s\right\|_{L^2} \\
      &\quad \leq \int_0^t \ee^{(t-s)\delta} \ee^{(s_{n,\alpha} + \delta)s} \, \d s \sup_{s \in (0,t)} \ee^{-(s_{n,\alpha} + \delta)s}  \left( C_\varepsilon \| \omega \|_{L^Q} + \varepsilon C \| \omega \|_{L^2} \right) \\
      &\quad \leq \ee^{(s_{n,\alpha} + \delta)t} \left( C_\varepsilon \| \omega_0 \|_{L^Q} + \varepsilon C \sup_{s \in (0,t)}  \ee^{-(s_{n,\alpha} + \delta)s}  \| \omega \|_{L^2} \right) \, ,
\end{aligned}
\end{equation}
where we used the short-hand notation $s_{n,\alpha} \coloneqq s(L_{n,\alpha })$. The term containing $\varepsilon$ can be absorbed into the left-hand side of the desired estimate. This completes the $k=0$ case.

We now proceed by induction. Given the semigroup estimate \eqref{eq:fullLbound} with $H^k$, we wish to prove the same estimate with $H^{k+1}$. We have
\begin{equation}
    \p_t \nabla^k \omega + [\nabla^k, A_{n,\alpha}] \omega + A_{n,\alpha} \nabla^k \omega + \nabla^k (B_{n,\alpha} \omega) = 0 \, .
\end{equation}
We write Duhamel's formula:
\begin{equation}
    \nabla^{k+1} \omega = \ee^{- t A_{n,\alpha}} \nabla^{k+1} \omega_0 - \int_0^t \ee^{- (t-s) A_{n,\alpha}} [\nabla^{k+1}, A_{n,\alpha}] \omega \, ds - \int_0^t \ee^{- (t-s) A_{n,\alpha}}\nabla^{k+1} (B_{n,\alpha} \omega) \, \d s \, .
\end{equation}
The commutator satisfies $\| [\nabla^{k+1}, A_{n,\alpha}] \omega \|_{L^2} \lesssim \| \omega \|_{H^k}$. For the remaining term, we have
\begin{equation}
    \nabla^{k+1} (-\oo \cdot \nabla u + u \cdot \nabla \oo ) \, ,
\end{equation}
which can be treated similarly as before.
%When every derivative hits $u$, we estimate this term as before. (On the other hand, when at least one derivative hits $u$, the estimate simplifies.) \dacomment{I can add some more detail. IOU}
\end{proof}

\subsection{Conclusion of the proof of  Theorem~\ref{thm:nonlinearinstab}}\label{sec_iteration}

In this section, we complete the proof of Theorem~\ref{thm:nonlinearinstab}. Our approach is inspired by Grenier \cite{grenier}.

\subsubsection{Constructing approximate solutions}

Here we construct a sequence of approximate solutions $\omega_k (x,t)$, $k\geq 0$, such that
\begin{equation}
    \label{eq:estimatingomegak}
\| \omega_k(\cdot,t) \|_{L^Q \cap H^m} \lesssim_{k,m} \ee^{at}  \qquad \text{ for } m \geq 0, \, t\in \R  ,
\end{equation}
and 
\begin{equation}
    \label{eq:differenceofPicard}
\| (\omega_k - \omega_{l})(\cdot,t) \|_{L^Q \cap H^m} \lesssim_{k,l,m} \ee^{a(l+1)t} \qquad \text{ for } k,l\geq 0, k\geq l , m\geq 2, t\in \R .
\end{equation}

From a certain perspective, our goal of this step is to produce solutions satisfying the integral equations
\begin{equation}
    \omega(\cdot,t) = \omega_0 - \int_{-\infty}^t \ee^{(t-s)\mathcal{L}} [u,\omega] \, \d s \, ,
\end{equation}
with $\omega - \omega_0 = O(\ee^{2ta})$ in a certain topology. The integral equations lose one derivative, but this will not be an issue, as the eigenfunction is smooth, and we will consider finitely many $k$'s.

To be precise we first note that the assumptions of Theorem~\ref{thm:nonlinearinstab} imply not only that $\mathcal{L}$ has an unstable eigenvalue but also that one of the invariant subspaces must contain an unstable mode. Let $\lambda \in \sigma(L_{n,\alpha})$ be an unstable eigenvalue of $L_{n,\alpha}$ for some $(n,\alpha) \in \Z^2 \setminus \{ (\pm 1,0) \}$ satisfying
\begin{equation}\label{choice_a}
    a \coloneqq \re \,\lambda \in (S/2,S] \, .
\end{equation} Recall that $S$ was defined in~\eqref{eq:Sdef}. We can thus fix a nontrivial eigenfunction $\eta \in L^Q$ of \eqref{omega_unst} with some $\lambda \in \C$ with $\re\, \lambda >0$. We note that $\eta \in L^Q\cap H^k$ for every $k\geq 0$, by Lemma~\ref{lem:smoothnessofefns}.  We set
\begin{equation}
    \omega_0(\cdot,t) \coloneqq \ee^{t\mathcal{L}} \eta + (\ee^{t\mathcal{L}} \eta )^* = \ee^{\lambda t} \eta + \ee^{\lambda^* t} \eta^* ,
\end{equation}
and we observe that $\omega_0$ satisfies the linearized equation \eqref{linearization}, 
\begin{equation}
    \p_t \omega_0 - \mathcal{L} \omega_0 = 0  .
\end{equation}

In order to show~\eqref{eq:estimatingomegak}, we first consider the algebra $\mathbb{A}$ of functions generated, under function-function addition and multiplication and scalar-function multiplication, by $L^Q \cap (\cap_{m \geq 0} H^m)$ functions in the $(n,\alpha)$ and $(-n,-\alpha)$ modes. This algebra only contains functions with components in finitely many Fourier modes, which are necessarily of the form $(\tau n, \tau \alpha)$, $\tau \in \Z$. Crucially, for all $f \in \mathbb{A}$, we have the semigroup bound (see Lemma~\ref{lem:semigroupestimate})
\begin{equation}
    \| \ee^{t\mathcal{L}} f \|_{L^Q \cap H^m} \leq C \ee^{tS} \| f \|_{L^Q \cap H^m} \, ,
\end{equation}
where $C$ depends on the background $\oo$, the Fourier modes in which $f$ is concentrated, and integrability and regularity parameters $Q$ and $m$.

 For each $k\geq 1$, we define the \emph{$k$th Picard iterate} by
\begin{equation}
\omega_k (\cdot,t) \coloneqq \omega_0 - \int_{-\infty }^t \ee^{(t-s)\mathcal{L}}  [u_{k-1},\omega_{k-1} ] \, \d s.    
\end{equation}
A direct calculation, using induction, will show that $\omega_k$ is well defined. Crucially, we can estimate the nonlinear term as 
\begin{equation}
    \label{eq:estimateonproducts}
    \| u \cdot \nabla \omega \|_{L^Q \cap H^m} + \| \omega \cdot \nabla u \|_{L^Q \cap H^m} \lesssim_m \| \omega \|_{L^Q \cap H^{m+1}}^2 \, .
\end{equation}
Then~\eqref{eq:estimatingomegak} follows from the known estimate on $\omega_0$ and 
\begin{equation}
\begin{aligned}
    \left\| \int_{-\infty }^t \ee^{(t-s)\mathcal{L}} [u_{k-1},\omega_{k-1} ] \d s \right\|_{L^Q \cap H^m} &\leq C \int_{-\infty}^t \ee^{S(t-s)} \| [u_{k-1}, \omega_{k-1}](\cdot,s) \|_{L^Q \cap H^m} \, \d s \\
    &\leq C \int_{-\infty}^t \ee^{S(t-s)} \ee^{2sa} \, \d s \left( \sup_{s \in (-\infty,t)} \ee^{-as} \| \omega_k \|_{L^Q \cap H^{m+1}} \right)^2  \\ &\lesssim \ee^{2at} \, ,
\end{aligned}
\end{equation}
for $m \geq 2$, where we used the fact \eqref{choice_a} that $2a > S$. In conclusion,
\begin{equation}
    \label{eq:estimatingomegak_repeat}
\| \omega_k(\cdot,t) \|_{L^Q \cap H^m} \lesssim_{k,m} \ee^{at} \, , \quad \forall m \geq 0, \, t\in \R  ,
\end{equation}
and the Biot-Savart estimates of Lemma~\ref{lem:biotsavartlaw} give 
\begin{equation}
    \| u_k(\cdot,t) \|_{L^{Q^*_2}} + \| \nabla u_k(\cdot,t) \|_{H^m} \lesssim_{k,m} \ee^{at} \, \qquad \text{ for } m \geq 0,\, t\in \R ,
\end{equation}
where $Q^*_2 \coloneqq 2Q/(2-Q)$ denotes the $2$D Sobolev exponent of $Q$. This in particular shows \eqref{eq:estimatingomegak}.

Similarly, one can use the observation that
\[
\begin{aligned}
(\omega_k - \omega_{k-1} )(t) &= -\int_{-\infty }^t \left( [u_{k-1}, \omega_{k-1}] - [u_{k-2},\omega_{k-2}] \right) \d s\\
&= -\int_{-\infty }^t \left( [u_{k-1}-u_{k-2}, \omega_{k-1}] + [u_{k-2},\omega_{k-1}-\omega_{k-2}] \right) \d s \, .
\end{aligned}
\]
to show inductively that
\[
\| (\omega_k - \omega_{k-1})(\cdot,t) \|_{L^Q \cap H^m} \lesssim_{k,m} \ee^{a(k+1)t} \qquad \text{ for }m\geq 2, t \in \R,
\]
from which \eqref{eq:differenceofPicard} follows. 

\subsubsection{Nonlinear estimate}

Here we show that, for every $\ell \in \N$ and any sequence $T_{\ell } \to -\infty$, as $\ell \to \infty$, there exists a unique solution $\wo^{(\ell )}$ to
\begin{equation}
    \label{eq:woequation}
\p_t \wo- \mathcal{L} \wo + [ \wu , \wo] + [ \wu , {\omega_k }] + [ {u_k} , \wo] + \underbrace{[u_k - u_{k-1},\omega_k ] +[u_{k-1} , \omega_k - \omega_{k-1} ]}_{=: - E_k } = 0 
\end{equation}
with $\wo^{(\ell )} (T_{\ell } )=0$, 
and $T<0$, such that
\eqnb\label{toshow_wo}
  \| \wo^{(\ell)} \|_{L^Q \cap H^N}^2 \leq C(N,k) \int_{T_\ell}^t \ee^{(M-1/2)(t-s)} \ee^{Ms} \, \d s \leq C_1(N,k) \ee^{Mt} 
\eqne
for all $t\in [T_{\ell} , T ]$, upon an appropriate choice of $k$, where $M\coloneqq 2a (k+1)$. \\

Note that then 
\[
    \omega \coloneqq \oo + \omega_k + \wo ^{(\ell )}
\]
satisfies the $3$D Euler equations \eqref{3d_euler}.  We note that, by classical theory for the solution $\omega = \oo + \omega_k + \wo^{(\ell)}$, a solution exists on a short time interval which \emph{a priori} may depend on $\ell$. Our goal in this step is to estimate $\wo^{(\ell)}$ and thereby demonstrate that it exists up to time $T = O(1)$ and decays like $O(\ee^{a(k+1)t})$ backward in time.\\

In order to estimate $\omega^{(\ell )}$ in $L^Q \cap H^N$, we first note that
\begin{equation}
\| E_k \|_{L^Q \cap H^m} \lesssim_{k,m} % \| \omega_k - \omega_{k-1} \|_{H^2} \max \{ \| \omega_k \|_{H^2} , \| \omega_{k-1} \|_{H^2} \} \leq C_k
\ee^{a(k+1)t} 
\end{equation}
for all $k, m \geq 0$. For brevity we will write $\wo = \wo^{(\ell )}$ in the remainder of this section. We rewrite \eqref{eq:woequation} as
\begin{equation}
    \label{eq:woequationwithoutcommutator}
\begin{aligned}
       &\p_t \wo + (\ou + u_k + \wu ) \cdot \nabla \wo \\
       &\quad + \wu  \cdot \nabla (\oo + \omega_k) - \wo \cdot \nabla (\ou  + u_k + \wu) - (\oo  + \omega_k) \cdot \nabla \wu  = E_k ,
       \end{aligned}
\end{equation}
We apply $\p^\alpha$, where $\alpha \in (\N_0)^3$ is a multi-index with $|\alpha| \leq N$, to the equation for $\wo^{(\ell)}$:
\begin{equation}
    \label{eq:equationwithcommutator}
\begin{aligned}
       &\p_t \p^\alpha \wo + (\ou + u_k + \wu) \cdot \nabla \p^\alpha \wo + [\p^\alpha, (\ou  + u_k + \wu) \cdot] \nabla \wo \\
       &\quad + \p^\alpha \left( \wu  \cdot \nabla (\ow  + \omega_k) - \wo \cdot \nabla (\bar{u} + u_k + \tilde{u}) - (\oo  + \omega_k) \cdot \nabla \wu \right) = \p^\alpha E_k \, . 
\end{aligned}
\end{equation}
We now estimate the various terms. %and pay special attention to those terms containing $\tilde{u}$ without derivatives on it.
To begin, we have the estimate on the commutator term:
\begin{equation}
\begin{aligned}
    \| [\p^\alpha, (\ou + u_k + \wu) \cdot] \nabla \wo \|_{L^2} &\leq C(N) \left( \| \ou + u_k + \wu \|_{L^\infty} + \| \nabla( \ou + u_k + \wu ) \|_{H^{N-2}}\right) \| \wo \|_{H^N} \\
    &\leq C(N) ( 1 + C_k \ee^{at}) \| \wo \|_{H^N} + C(N) \| \wo \|_{L^Q \cap H^N} \| \wo \|_{H^N} \, .
    \end{aligned}
\end{equation} 
The remaining terms can be estimated in $L^Q \cap H^N$ using the product estimate~\eqref{eq:estimateonproducts}. A crucial point is that estimates on the terms containing the background arise with a constant depending on the regularity $N$ but not the order of approximation $k$.
\begin{comment}
We next examine the term $\tilde{u} \cdot \nabla \p^\alpha (\oo + \omega_k)$ arising in $\p^\alpha (\wu \cdot \nabla (\bar{\omega} + \omega_k))$:
\begin{equation}
    \| \tilde{u} \cdot \nabla \p^\alpha (\bar{\omega} + \omega_k) \|_{L^2} \lesssim \| \tilde{u} \|_{L^\infty} (1 + \ee^{at}) \lesssim (1 + \ee^{at})  \| \wo \|_{L^Q \cap H^N} \, .
\end{equation}
The remaining terms do not carry 
\end{comment}
We multiply~\eqref{eq:equationwithcommutator} by $\p^\alpha \wo$ and integrate by parts to obtain
\begin{equation}
    \label{eq:HNdiffeq}
    \frac{1}{2} \frac{\d}{\d t} \| \p^\alpha \wo \|_{L^2}^2 \leq C(N)\left( 1+C_k \ee^{at} + \| \wo \|_{L^Q \cap H^N} \right) \| \wo \|_{L^Q \cap H^N}^2 + C(N,k) \ee^{2a(k+1)t} \, .
\end{equation}
Next, we multiply~\eqref{eq:woequationwithoutcommutator} by $|\wo|^{Q-2}\wo$ and integrate by parts:
\begin{equation}
    \label{eq:Qdiffeq}
    \frac{1}{Q} \frac{\d}{\d t} \| \wo \|_{L^Q}^Q \leq C \left( 1+ C_k \ee^{at} + \| \wo \|_{L^Q \cap H^N} \right) \| \wo \|_{L^Q \cap H^N}^Q + C_k \ee^{Qa(k+1)t}
\end{equation}
where we have used the inequality
\begin{equation}
    \int f \cdot |\wo|^{Q-2}\wo \, dx \leq \| f \|_{L^Q} \| \wo^{Q-1} \|_{L^{Q/(Q-1)}} = \| f \|_{L^Q} \| \wo \|_{L^Q}^{Q-1} \leq C_\varepsilon \| f \|_{L^Q}^Q + \varepsilon \| \wo \|_{L^Q}^Q \, ,
\end{equation}
for any $\varepsilon > 0$, to split and absorb various terms. Let $A = \| \wo \|_{L^Q}^Q$. Then~\eqref{eq:Qdiffeq} gives us a differential inequality for $\| \wo \|_{L^Q}^2 = A^{2/Q}$ according to $\frac{\d}{\d t} A^{2/Q} = \frac{2}{Q} A^{2/Q - 1} \frac{\d}{\d t} A$:
\begin{equation}
    \label{eq:Qdiffeqtwo}
    \frac{\d}{\d t} \| \wo \|_{L^Q}^2 \leq C \left( 1+ C_k \ee^{at} + \| \wo \|_{L^Q \cap H^N} \right) \| \wo \|_{L^Q}^{2-Q} \| \wo \|_{L^Q \cap H^N}^Q + C_k \ee^{2a(k+1)t}
\end{equation}
where we have used the inequality
\begin{equation}
    \| \wo \|_{L^Q}^{2-Q} \ee^{Qa(k+1)t} \leq C \| \wo \|_{L^Q}^2 + C \ee^{2a(k+1)t} \, ,
\end{equation}
that is, Young's inequality $a b \lec a^{\frac{Q}{2-Q}} + b^\frac{2}{Q}$.
 Next, we sum~\eqref{eq:HNdiffeq} over multi-indices $|\alpha| \leq N$ and~\eqref{eq:Qdiffeqtwo} to obtain
\begin{equation}\label{temp21}
\begin{aligned}
    &\frac{\d}{\d t} \| \wo \|_{L^Q \cap H^N}^2 \\
    &\quad \leq C(N) \left( 1+ C_k \ee^{at} + \| \wo \|_{L^Q \cap H^N} \right)  \| \wo \|_{L^Q \cap H^N}^2 + C(N,k) \ee^{2a(k+1)t} \, .
    \end{aligned}
\end{equation}
We now consider only $t \leq T$ for some (not yet fixed) $T \leq 0$. At $t = T_\ell$, we have $\wo^{(\ell)} = 0$, and we only concerned with $t\geq T_{\ell}$ for which $\| \wo \|_{L^Q \cap H^N} \leq  1$. With these restriction, \eqref{temp21} can be simplified to 
\begin{equation}
    \label{eq:maindifferentialinequality}
\begin{aligned}
    \frac{\d}{\d t} \| \wo \|_{L^Q \cap H^N}^2  \leq C_0(N) (1+ C_k \ee^{at})  \| \wo \|_{L^Q \cap H^N}^2 + C(N,k) \ee^{2a(k+1)t} ,
    \end{aligned}
\end{equation}
for $t\geq T_{\ell}$ such that $\| \wo \|_{L^Q \cap H^N } \leq 1$.  We now fix the order $k$  large enough to ensure
\begin{equation}
    2a(k+1) > C_0(N) + 1 \, ,
\end{equation}
where $C_0(N)$ is as in~\eqref{eq:maindifferentialinequality}., and we restrict our attention only to $T \ll -1$ such that 
\begin{equation}
    C_0(N) C_k \ee^{at} \leq 1/2 \,\quad \text{ for }t\leq T .
\end{equation}
With these restrictions, we have
\begin{equation}
    \frac{\d}{\d t} \| \wo \|_{L^Q \cap H^N}^2  \leq (M-1/2) \| \wo \|_{L^Q \cap H^N}^2 + C(N,k) \ee^{Mt} 
\end{equation}
with $M = 2a(k+1)$. Hence, provided $T_\ell \leq t \leq T$ and $\| \wo \|_{L^Q \cap H^N} \leq 1$, we have
\begin{equation}
    \| \wo^{(\ell)} \|_{L^Q \cap H^N}^2 \leq C(N,k) \int_{T_\ell}^t \ee^{(M-1/2)(t-s)} \ee^{Ms} \, \d s \leq C_1(N,k) \ee^{Mt} \, .
\end{equation}
Finally, we fix $T \ll -1$ to ensure
\begin{equation}
    C_1(N,k) \ee^{Mt} \leq 1/2 \, ,
\end{equation}
which guarantees that the estimates can be propagated until time $T$ unconditionally.

\subsubsection{Conclusion}\label{sec_conclude_it}

Here we take a limit $\ell \to \infty$ to conclude the proof of Theorem~\ref{thm:nonlinearinstab}. \\

Namely, passing to a subsequence, which we do not relabel, the solutions $\wo^{(\ell)}$ converge to $\wo$ in the sense that
\[
    \wo^{(\ell)} \overset{*}{\rightharpoonup} \wo \text{ in } L^\infty ((\tau ,T ); H^N \cap L^Q (\R^2 \times \T )) 
\]
for every $\tau \leq T$. By lower semicontinuity, $\wo$ inherits the above estimates.
It can also be shown using the estimates on the subsequence and the Aubin-Lions lemma that the solutions $u^{(\ell)}\coloneqq BS[\oo + \omega_k + \wo ^{(\ell )}]$ to the Euler equations converge strongly in $L^2_{\rm loc}(\R^2 \times \T \times (-\infty,T))$ to a vector field $u$, which is therefore a solution of the Euler equations.

We have thus obtained a solution $u$ of the Euler equations on $\R^2 \times \T \times (-\infty,T)$ which satisfies the decomposition
\begin{equation}
    \omega = \oo + \omega_0 + (\omega_k - \omega_0) + \wo \, ,
\end{equation}
where $\omega_0 = \ee^{\lambda t} \eta + \ee^{\lambda^* t}\eta^*$ is explicit and satisfies
\[
    \| u_0 \|_{L^\infty} \geq \widetilde{C}^{-1} \ee^{at}
\]
for all $t\in \R$, while the remainders satisfy the estimates
\[
    \| u_k - u_0 \|_{L^\infty} + \| \wu\|_{L^\infty} \leq \widetilde{C} \ee^{2at} 
\]
for $t\leq T$, by \eqref{eq:differenceofPicard} and \eqref{toshow_wo}. We now pick $\overline{T} \leq T$ such that $\widetilde{C}^2\ee^{a\overline{T}} \leq 1/4$. Then
\begin{equation}
    \label{eq:deltafar}
    \begin{split}
    \| u(\overline{T}) - \ou  \|_{L^\infty} &\geq \| u_0 (\overline{T})   \|_{L^\infty}-\| (u_k - u_0) (\overline{T})   \|_{L^\infty} - \| \wu (\overline{T})   \|_{L^\infty} \geq \widetilde{C}^{-1} \ee^{a\overline{T}} - \widetilde{C} \ee^{2a\overline{T}} -\widetilde{C} \ee^{2a\overline{T}} \\
    &\geq  \frac{1}{2 \widetilde{C}} \ee^{a\overline{T}} =: \delta \, ,
    \end{split}
\end{equation}
while
\begin{equation}
    \label{eq:canbemadeepsilonsmall}
    \| (\omega - \oo)(t) \|_{L^Q \cap H^N} \leq \| \omega_k (t)  \|_{L^Q \cap H^N}+ \| \wo (t)  \|_{L^Q \cap H^N} \leq  C \ee^{at} 
\end{equation}
for all $t\leq T$, by \eqref{eq:estimatingomegak} and \eqref{toshow_wo}.
Together,~\eqref{eq:canbemadeepsilonsmall} and ~\eqref{eq:deltafar} guarantee that, for any $\varepsilon > 0$, one can translate the solution $u$ in time to ensure that (i) at time zero, the vorticity $\omega$ is $\varepsilon$-close to the background $\oo$ in the $L^Q \cap H^N$ topology, whereas (ii) at time $T_\varepsilon$, the $u$ velocity is $\delta$-far from the background $\ou$ in the $L^\infty$ topology, as required.

\appendix
\section{Expansion of $k$}\label{app_exp_of_k}
We first compute the Taylor coefficients of the potential $k(r)$, defined in~\eqref{def_of_k},  at $r=r_0$,
\eqnb\label{exp_01}
\begin{split}
 k(r_0) &= p_0 n^2 \left( 1+ \frac{b_0}{(-ib_0^{1/2} - \mu_m )^2} + \frac{a_0}{n(-ib_0^{1/2} - \mu_m )} + \frac{d_0}{n^2} \right) \\
&= p_0 n^2 \left(- i  \mu_m \frac{2 }{b_0^{1/2}} + \mu_m^2 \frac{3}{b_0} +  O(\mu_m^3 ) + i \frac{a_0}{n b_0^{1/2}}+ O(\mu_m n^{-1})+ O(n^{-2})\right),\\
 k'(r_0) &=  p'(r_0)  n^2 \left( -i  \mu_m \frac{2 }{b_0^{1/2}} + \mu_m^2 \frac{3}{b_0^2} +  O(\mu_m^3 ) + O(n^{-1})\right)
+p_0 n^2 \left( \frac{a'(r_0)}{n(-i b_0^{1/2} - \mu_m )} + \frac{d'(r_0)}{n^2} \right)  \\
&=  n^2 \left( -i  \mu_m \frac{2p'(r_0) }{b_0^{1/2}}  +  O(\mu_m^2 ) + O(n^{-1})\right)\\
 k'' (r_0) &= p'' (r_0) n^2 \left(- i  \mu_m \frac{2 }{b_0^{1/2}} + \mu_m^2 \frac{3}{b_0^2} +  O(\mu_m^3 ) + O(n^{-1})\right) + 2 p'(r_0) n^2  \left( \frac{a'(r_0)}{n(-i b_0^{1/2} - \mu_m)} + \frac{d'(r_0)}{n^2} \right)\\
&\quad + p_0 n^2 \left(  \frac{b''(r_0)}{(-ib_0^{1/2}-\mu_m )^2}-\frac{2b_0 n\Lambda''(r_0) }{(-ib_0^{1/2}-\mu_m )^3} +\frac{a''(r_0)}{n(-ib_0^{1/2}-\mu_m )}-\frac{a(r_0)n\Lambda''(r_0)}{n(-ib_0^{1/2}-\mu_m )^2} +\frac{d''(r_0)}{n^2}\right) \\
&=  n^2 \left(- i\mu_m \frac{2p''(r_0) }{b_0^{1/2}} + O(\mu_m^2 ) + O(n^{-1}) 
- \frac{p_0 b''(r_0) }{b_0} -i \mu_m \frac{2p_0 b''(r_0) }{b_0^{3/2}}  + i n \frac{2 p_0 \Lambda''(r_0 )}{b_0^{1/2}} \right.
\\&\hspace{4cm}\left. + n \mu_m \frac{6p_0  \Lambda''(r_0)}{b_0}+ \frac{a(r_0) p_0 \Lambda''(r_0) }{b_0}+ i \mu_m \frac{2p_0 a(r_0)\Lambda''(r_0)}{b_0^{3/2}} \right)
\end{split}
\eqne

Note that the leading order terms of $k'(r_0)$ (arising from taking $\d/\d r$ of $b/\gamma^2$) vanish due to our choice \eqref{choice_r0_beta} of $r_0,\beta$.

 Let $B \geq 1$. We now show that, if
\begin{equation}
	\label{eq:assumptionontildeomega}
|\mu_m| + |r-r_0| \leq Bn^{-1/2}
\end{equation}
(recall~\eqref{choice_of_tilde_om_m_copy} concerning $\mu_m$) for $n$ sufficiently large, then
\eqnb\label{remainder_11_app}
k_{\rm err} (r)  = in^2 \ho \frac{2p_0}{b_0^{1/2}}+ n (1+ n^3 (r-r_0)^4 ) O(1 )   \, ,
\eqne
where the implicit constant in the $O(1)$ may depend on $B$. 
%for $r-r_0=O(n^{-1/2})$.
(Recall \eqref{kerr_def} that $k_{\rm err} (r) = k(r)- (k_0 + k_2 (r-r_0)^2 ) $.)
 To this end, we take into account the assumption $\mu_m = O(n^{-1/2})$ from~\eqref{eq:assumptionontildeomega} 
 to further reduce \eqref{exp_01} to 
\eqnb\label{expansion_k_more_precis}
\begin{split}
k(r_0)  &= k_0 -i n^2 \ho \frac{2p_0}{b_0^{1/2}}+ n^2 \mu_m^2 \frac{3p_0}{b_0} + \hat \omega O(n^{3/2}) + in \frac{a_0}{b_0^{1/2}} + O(n^{1/2}) = k_0 + O(n^{3/2}) ,\\
 k' (r_0) &= (-1+i) O(n^{3/2}), \\
 k''(r_0)/2 &= k_2 + n^3 \mu_m \frac{3p_0 \Lambda''(r_0)}{b_0} +n^3 \ho \frac{3p_0 \Lambda''(r_0)}{b_0} + O(n^2) ,
 \end{split}
\eqne %= O(\ell_{\rm in}^4 n^2) \leq
where we also used the fact that $\ho = O(n^{-1/2})$. In order to estimate the remainder \eqref{remainder_11_app} of the Taylor expansion of $k$, we also need to estimate third derivatives. For this we only keep track of powers of $n$ as well as $r-r_0$ to obtain
\[
\begin{split}
    b&= O(1)  + O((r-r_0)^2)\\
    b'&=O(r-r_0)\\
    b'',b'''&= O(1)\\
    \gamma &= n O((r-r_0)^2 ) + O(1)\\
    \gamma'&= n \Lambda' = n O(r-r_0)\\
    \gamma'' &= n\Lambda'' = nO(1)\\
    \gamma''' &= n\Lambda''' = n O(1) \\
    a, a', a'', a''',p, p', p'', p''' &= O(1),    
\end{split}
\]
which implies that
\[
\begin{split}
\frac{a}{\gamma }  &= O(1), \\
\left(\frac{a}{\gamma } \right)' &= \frac{a'}{\gamma} - \frac{a}{\gamma^2 } \gamma' = n O(r-r_0),\\
\left(\frac{a}{\gamma } \right)'' &= \frac{a''}{\gamma} - 2\frac{a'}{\gamma^2 } \gamma' + \frac{a(2(\gamma' )^2 -\gamma'' )}{ \gamma^3} = O(1) + nO(r-r_0) + nO(1) = nO(1),\\
\left(\frac{a}{\gamma } \right)''' &= \frac{a'''}{\gamma} - 3\frac{a''}{\gamma^2 } \gamma' +3 \frac{a'(2(\gamma' )^2 -\gamma'' )}{ \gamma^3} +\frac{a(4\gamma \gamma' \gamma'' - \gamma \gamma''' -6 \gamma^2 (\gamma')^3 + 3\gamma^2 \gamma''\gamma' )}{\gamma^4}\\
&= O(1) + n O(r-r_0) +nO(1) + (n^2 O(r-r_0) + n O(1) + n^3 O((r-r_0)^3) + n^2 O(r-r_0) )\\
&=n^2 O(r-r_0) + nO(1), \\
 \frac{b}{\gamma^2 } &=  O(1), \\
\left( \frac{b}{\gamma^2 } \right)' &= \frac{b'}{\gamma } - 2 \frac{b }{\gamma^3 } \gamma' = O(r-r_0) + n O((r-r_0)^2)= n O((r-r_0)^2),  \\
\left( \frac{b}{\gamma^2 } \right)'' &= \frac{b''}{\gamma } - 4 \frac{b' }{\gamma^3 } \gamma' +\frac{b(6(\gamma')^2 -2\gamma \gamma'' )}{\gamma^4 } =  n O((r-r_0)^2) + (n^2 O((r-r_0)^2) + nO(1))=nO(1), \\
\left( \frac{b}{\gamma^2 } \right)''' &= \frac{b'''}{\gamma } - 6 \frac{b'' }{\gamma^3 } \gamma' +3\frac{b'(6(\gamma')^2 -2\gamma \gamma'' )}{\gamma^4 }+\frac{b(18\gamma \gamma' \gamma'' - 2\gamma^2 \gamma'''-24 (\gamma' )^3  )}{\gamma^5} \\
&= O(1) + nO(r-r_0) + (n^2 O((r-r_0)^3) + nO(r-r_0) ) \\
&\quad + (n^2 O(r-r_0) + n O(1) + n^3 O((r-r_0)^3) )\\
&= n^2 O(r-r_0) + n O(1) ,
\end{split}
\]
as long as $r-r_0 = O(n^{-1/2})$  as in~\eqref{eq:assumptionontildeomega}. 
Thus
\eqnb\label{remainder_more_precise}
\begin{split}
k''' &= pn^2 \left( \frac{a}{n\gamma } + \frac{b}{\gamma^2} + \frac{d}{n^2} \right)''' + 3p' n^2 \left( \frac{a}{n\gamma } + \frac{b}{\gamma^2} + \frac{d}{n^2} \right)'' + 3p'' n^2 \left( \frac{a}{n\gamma } + \frac{b}{\gamma^2} + \frac{d}{n^2} \right)' \\
&\quad + p''' n^2 \left( \frac{a}{n\gamma } + \frac{b}{\gamma^2} + \frac{d}{n^2} \right)\\
&= (n^4 O(r-r_0)+n^3 O(1) ) + n^3 O(1) + (n^2 O(r-r_0) + n^3 O((r-r_0)^2))+ n^2O(1) \\
&= n^4 O(r-r_0) + n^3 O(1) .
\end{split}
\eqne
This and \eqref{expansion_k_more_precis}  let us obtain the approximation error
\eqnb\label{Kerr_est_app}
\begin{split}
k_{\rm err} (r)  &= in^2 \ho \frac{2p_0}{b_0^{1/2}}+ n O(1 )   +  n^{3/2} O(r-r_0) +  n^{5/2} O((r-r_0)^2 ) + k'''(\tilde r ) (r-r_0)^3/6\\
& = in^2 \ho \frac{2p_0}{b_0^{1/2}}+ n (1+ n^3 (r-r_0)^4 ) O(1 )  ,
\end{split}
\eqne
 which proves \eqref{remainder_11_app}, as required. Here we have also denoted by $\tilde r$ some point between $r_0$ and $r$.

\subsection{Choice of $r_0,\beta >0$ satisfying \eqref{choice_r0_beta}}\label{sec_choice_r0_beta}
Here we verify that, in the case of the trailing vortex \eqref{trailing_vortex}, if $q$ satisfies~\eqref{q_restr}, then 
\eqnb\label{choice_r0_beta_claim}
\text{there exists a unique }r_0 ,\beta >0\text{ such that \eqref{choice_r0_beta} holds.}
\eqne
 Indeed, for the trailing vortex \eqref{trailing_vortex} $\Lambda'(r)=0$ if and only if
\eqnb\label{lambda_der}
\ee^{r^2} = 1+r^2 + \frac{\beta r^4 }q.
\eqne
Note that this equation has no solution $r>0$ if $\beta\leq q/2$ (in that case \eqref{lambda_der}, with ``$=$'' replaced by ``$\geq$'', holds for all $r\geq 0$, with equality only for $r=0$). On the other hand for each $\beta >q/2$ \eqref{lambda_der} has a unique solution $r_0>0$. Clearly the solution 
\eqnb\label{bijec}
r_0 = r_0(\beta)\quad \text{ is an increasing bijection }r_0\colon (q/2 ,\infty ) \to (0,\infty ).
\eqne 

On the other hand,  we have $b(r) =4  \beta (1-\beta q) q (1-\ee^{-r^2})\ee^{-r^2}/(1+\beta^2 r^2)$, and $b'(r)=0$ if and only if
\eqnb\label{find_zero}
g(r) - \beta^2 =0,\quad \text{ where } g(r)\coloneqq \frac{1-2\ee^{-r^2}}{r^2(2\ee^{-r^2}-1 )+ \ee^{-r^2}-1}.
\eqne
In particular, $b$ has a sign, depending on the sign of $1-\beta q$, and so the third condition in \eqref{choice_r0_beta} implies that we must have $\beta <1/q$. This together with the lower bound on $\beta$ above implies the constraint \eqref{beta_range}. 

This in particular gives the first smallness requirement on $q$, namely, $q<2^{1/2}$. 

We now observe that $g(r)>0$ if and only if $r <(\log 2)^{1/2}$, and that $g$ is decreasing for such $r$, with $g(r) \to +\infty$ as $ r\to 0^+$. In order to restrict ourselves to $r\in (0,(\log 2)^{1/2})$, so that we can find such $r$ solving \eqref{find_zero}, we consider $q \in (0,\sqrt{2})$ satisfying \eqref{q_restr}, i.e.
\[
q< \frac{\log 2}{\sqrt{1-\log 2}} \approx 1.251.
\]
The last inequality is equivalent to $\beta_0 <1/q$ where $\beta_0 >q/2$ is such that $r_0 (\beta_0)= \sqrt{\log 2}$, i.e.
\[
1= \log 2 + \frac{\beta_0}{q}(\log 2)^2
\]
(note that $\beta_0 >q/2$ by \eqref{bijec}). Thus $g\circ r_0 \colon (q/2 , \beta_0) \to (0,\infty )$ is  a decreasing bijection, and so there exists a unique $\beta \in (q/2,\beta_0)$ such that
\[
g(r_0(\beta )) = \beta^2.
\]
Moreover, $\beta$ is unique also on $(q/2,1/q)$, as $g(r_0 (\beta )) \leq 0$ for $\beta \geq \beta_0$.  It is also clear that, for such $r_0,\beta$, we have that $b(r_0)$ is the global maximum of $b$ and $\Lambda (r_0)$ is the global minimum of $\Lambda$. Thus we obtain \eqref{choice_r0_beta_claim}, as required.\\

We note that the restriction \eqref{q_restr} is almost sharp in the sense that a unique choice of $r_0,\beta$ still exists for some $q>\log 2/\sqrt{1-\log 2}$, but not for $q\to 2^{1/2}$.  Indeed, considering $q>1.25$ we also have $\beta^2 >(q/2)^2 > 0.390625$ and so we could restrict ourselves to the `$r$'s for which $g(r)>0.390625$ (since we look for solutions to $g(r)=\beta^2$), i.e. to $r\in (0,c) \coloneqq g^{-1}((0.390625,\infty))\subset g^{-1} ((0,\infty)) = (0,(\log 2)^{1/2})$. Thus, redefining $\beta_0$ to be such that $r_0 (\beta_0 )=c$, we will have that $\beta <1/q$ if and only if $q$ is less than a number bigger than $\log 2/\sqrt{1-\log 2}$. 
However, we point out that it is impossible to extend the range of $q$ to the entire interval $(0,\sqrt{2})$, since
\[
 \sup r_0 ((q/2,1/q)) \to 0^+   \quad \text{ as } q\to \sqrt{2}^-
\]
and consequently
\[
\inf (g\circ r_0 ) ((q/2,1/q)) \to \infty \quad \text{ as } q\to \sqrt{2}^-.
\]
On the other hand, $\beta^2 \leq 1/q^2 \leq 1/2$, so, for sufficiently small $|2^{1/2} -q|$, there is no $r_0(\beta )$ for which $g\circ r_0 (\beta ) =\beta^2$. In other words, there is no solution $r,\beta >0$ to both \eqref{lambda_der} (i.e. $r=r_0(\beta )$) and \eqref{find_zero}.

% We thus let $q\in (0,\sqrt{2})$ be sufficiently small so that the solution $r$ to \eqref{lambda_der} with $\beta = 1/q$ satisfies $r>\log 2$, and we let $\beta_0 \in (q/2,1/q)$ be such that $\log 2$ is the solution to \eqref{lambda_der} with $\beta = \beta_0$. This implies that $g\circ r_0 \colon (q/2 , \beta_0) \to (0,\infty )$ is  a decreasing bijection, and so $g\circ r_0 - (\cdot )^2 \colon (q/2,\beta_0) \to (-\beta_0^2, \infty )$ is a decreasing bijection. Hence there exists a unique $\beta \in (q/2,\beta_0)$ such that $g(r_0(\beta ))-\beta^2 =0$. Moreover, $\beta$ is unique also on $(q/2,1/q)$, as $g(r_0 (\beta )) \leq 0$ for $\beta \geq \beta_0$. 

%  As for the upper bound in \eqref{beta_range}, we also note that $b=0$ if $\beta = 1/q$, in which case $\re \, k >0$ for all $r$ and there are no nontrivial solutions to \eqref{eq_phi}. %In fact, it was proved by Howard and Gupta \cite{hg} (was it? Stewartson and Capell mention something along these lines) that for $\beta \geq 1/q$ the trailing vortex is stable. ADD ME BACK

\subsection*{Acknowledgments}

DA was partially supported by NSF Postdoctoral Fellowship Grant No.\ 2002023, NSF Grant No. 2406947, and the Office of the Vice Chancellor for Research and Graduate Education at the University of Wisconsin–Madison with funding from the Wisconsin Alumni Research Foundation. WO was partially supported by the NSF Grant no.~DMS-2511556 and by the Simons grant SFI-MPS-TSM-00014233. The authors are grateful to Chongchun Zeng, Zhiwu Lin and Susan Friedlander for helpful discussions, as well as to Mustafa Aydin for many constructive comments.

\bibliographystyle{plain}
\bibliography{literature}

\begin{thebibliography}{10}

\bibitem{AbidkNonlinearModeSelection}
M.~Abid.
\newblock Nonlinear mode selection in a model of trailing line vortices.
\newblock {\em J. Fluid Mech.}, 605:19--45, 2008.

\bibitem{Abid1998}
M.~Abid and M.~E. Brachet.
\newblock Direct numerical simulations of the {B}atchelor trailing vortex by a
  spectral method.
\newblock {\em Physics of Fluids}, 10(2):469–475, February 1998.

\bibitem{ABCgluing}
D.~Albritton, E.~Bru\'{e}, and M.~Colombo.
\newblock Gluing non-unique {N}avier-{S}tokes solutions.
\newblock {\em Ann. PDE}, 9(2):Paper No. 17, 25, 2023.

\bibitem{ABCD}
D.~Albritton, E.~Bru{\'e}, M.~Colombo, C.~De~Lellis, V.~Giri, M.~Janisch, and
  H.~Kwon.
\newblock {\em Instability and Non-uniqueness for the 2D Euler Equations, after
  M. Vishik}, volume 219.
\newblock Princeton University Press, 2024.

\bibitem{bardosguostrauss}
C.~Bardos, Y.~Guo, and W.~Strauss.
\newblock Stable and unstable ideal plane flows.
\newblock {\em Chinese Ann. Math. Ser. B}, 23(2):149--164, 2002.
\newblock Dedicated to the memory of Jacques-Louis Lions.

\bibitem{barston}
E.~M. Barston.
\newblock Circle theorems for inviscid steady flows.
\newblock {\em Int. J. Engng Sci.}, 18:477--489, 1980.

\bibitem{Batchelor64}
G.~K. Batchelor.
\newblock Axial flow in trailing line vortices.
\newblock {\em J. Fluid Mech.}, 20:645--658, 1964.

\bibitem{bedrossian2019vortex}
J.~Bedrossian, M.~Coti~Zelati, and V.~Vicol.
\newblock Vortex axisymmetrization, inviscid damping, and vorticity depletion
  in the linearized 2d {E}uler equations.
\newblock {\em Annals of PDE}, 5:1--192, 2019.

\bibitem{Billant_Gallaire_2013}
P.~Billant and F.~Gallaire.
\newblock A unified criterion for the centrifugal instabilities of vortices and
  swirling jets.
\newblock {\em Journal of Fluid Mechanics}, 734:5–35, 2013.

\bibitem{cs}
K.~Capell and K.~Stewartson.
\newblock On the stability of ring modes in a trailing line vortex: the upper
  neutral points.
\newblock {\em J. Fluid Mech.}, 156:369--386, 1985.

\bibitem{WeiHarmonicMap}
J.~D\'{a}vila, M.~del Pino, and J.~Wei.
\newblock Singularity formation for the two-dimensional harmonic map flow into
  {$S^2$}.
\newblock {\em Invent. Math.}, 219(2):345--466, 2020.

\bibitem{nist}
{\it NIST Digital Library of Mathematical Functions}.
\newblock https://dlmf.nist.gov/, Release 1.2.4 of 2025-03-15.
\newblock F.~W.~J. Olver, A.~B. {Olde Daalhuis}, D.~W. Lozier, B.~I. Schneider,
  R.~F. Boisvert, C.~W. Clark, B.~R. Miller, B.~V. Saunders, H.~S. Cohl, and
  M.~A. McClain, eds.

\bibitem{DrazinReid}
P.~G. Drazin and W.~H. Reid.
\newblock {\em Hydrodynamic stability}.
\newblock Cambridge Mathematical Library. Cambridge University Press,
  Cambridge, second edition, 2004.
\newblock With a foreword by John Miles.

\bibitem{Duck1986}
P.~W. Duck.
\newblock The inviscid stability of swirling flows: large wavenumber
  disturbances.
\newblock {\em Z. Angew. Math. Phys.}, 37(3):340--360, 1986.

\bibitem{Duck1980}
P.~W. Duck and M.~R. Foster.
\newblock The inviscid stability of a trailing line vortex.
\newblock {\em Zeitschrift f\"{u}r angewandte Mathematik und Physik ZAMP},
  31(4):524–532, July 1980.

\bibitem{DuckKhorrami}
P.~W. Duck and M.~R. Khorrami.
\newblock A note on the effects of viscosity on the stability of a
  trailing-line vortex.
\newblock {\em J. Fluid Mech.}, 245:175--189, 1992.

\bibitem{engel_nagel}
K.-J. Engel and R.~Nagel.
\newblock {\em One-parameter semigroups for linear evolution equations}, volume
  194 of {\em Graduate Texts in Mathematics}.
\newblock Springer-Verlag, New York, 2000.
\newblock With contributions by S. Brendle, M. Campiti, T. Hahn, G. Metafune,
  G. Nickel, D. Pallara, C. Perazzoli, A. Rhandi, S. Romanelli and R.
  Schnaubelt.

\bibitem{FABRE2004}
D.~Fabre and L.~Jacquin.
\newblock Viscous instabilities in trailing vortices at large swirl numbers.
\newblock {\em Journal of Fluid Mechanics}, 500:239–262, January 2004.

\bibitem{friedlanderstraussvishikearly}
S.~Friedlander, W.~Strauss, and M.~Vishik.
\newblock Nonlinear instability in an ideal fluid.
\newblock {\em Ann. Inst. H. Poincar\'{e} Anal. Non Lin\'{e}aire},
  14(2):187--209, 1997.

\bibitem{friedlanderstraussvishik}
S.~Friedlander, W.~Strauss, and M.~Vishik.
\newblock Robustness of instability for the two-dimensional {E}uler equations.
\newblock {\em SIAM J. Math. Anal.}, 30(6):1343--1354, 1999.

\bibitem{GALLAIRE2003}
F.~Gallaire and J.-M. Chomaz.
\newblock Mode selection in swirling jet experiments: a linear stability
  analysis.
\newblock {\em Journal of Fluid Mechanics}, 494:223–253, November 2003.

\bibitem{gallay_stability}
T.~Gallay.
\newblock Stability of vortices in ideal fluids: the legacy of {K}elvin and
  {R}ayleigh.
\newblock In {\em Hyperbolic problems: theory, numerics, applications},
  volume~10 of {\em AIMS Ser. Appl. Math.}, pages 42--59. Am. Inst. Math. Sci.
  (AIMS), Springfield, MO, [2020] \copyright 2020.

\bibitem{gs_linear}
T.~Gallay and D.~Smets.
\newblock On the linear stability of vortex columns in the energy space.
\newblock {\em J. Math. Fluid Mech.}, 21(4):Paper No. 48, 27, 2019.

\bibitem{gs_spectral}
T.~Gallay and D.~Smets.
\newblock Spectral stability of inviscid columnar vortices.
\newblock {\em Anal. PDE}, 13(6):1777--1832, 2020.

\bibitem{grenier}
E.~Grenier.
\newblock On the nonlinear instability of {E}uler and {P}randtl equations.
\newblock {\em Comm. Pure Appl. Math.}, 53(9):1067--1091, 2000.

\bibitem{HEATON20072}
C.~J. Heaton.
\newblock Centre modes in inviscid swirling flows and their application to the
  stability of the batchelor vortex.
\newblock {\em Journal of Fluid Mechanics}, 576:325–348, March 2007.

\bibitem{HEATON2007}
C.~J. Heaton.
\newblock Optimal growth of the {B}atchelor vortex viscous modes.
\newblock {\em Journal of Fluid Mechanics}, 592:495–505, November 2007.

\bibitem{HEATONPeake2007}
C.~J. Heaton and N.~Peake.
\newblock Transient growth in vortices with axial flow.
\newblock {\em Journal of Fluid Mechanics}, 587:271–301, August 2007.

\bibitem{howard}
L.~N. Howard.
\newblock Note on a paper of {J}ohn {W}. {M}iles.
\newblock {\em J. Fluid Mech.}, 10:509--512, 1961.

\bibitem{hg}
L.~N. Howard and A.~S. Gupta.
\newblock On the hydrodynamic and hydromagnetic stability of swirling flows.
\newblock {\em J. Fluid Mech.}, 14:463--476, 1962.

\bibitem{Katobook}
T.~Kato.
\newblock {\em Perturbation theory for linear operators}.
\newblock Die Grundlehren der mathematischen Wissenschaften, Band 132.
  Springer-Verlag New York, Inc., New York, 1966.

\bibitem{Khorrami_1991}
M.~R. Khorrami.
\newblock On the viscous modes of instability of a trailing line vortex.
\newblock {\em Journal of Fluid Mechanics}, 225:197–212, 1991.

\bibitem{ko_jmfm}
A.~Kumar and W.~O\.za\'nski.
\newblock A simple proof of linear instability of shear flows with application
  to vortex sheets.
\newblock {\em J. Math. Fluid Mech.}, 27(3):Paper No. 34, 14, 2025.

\bibitem{Leibovich1978}
S.~Leibovich.
\newblock The structure of vortex breakdown.
\newblock {\em Annual Review of Fluid Mechanics}, 10(1):221–246, January
  1978.

\bibitem{Leibovich1984}
S.~Leibovich.
\newblock Vortex stability and breakdown -- survey and extension.
\newblock {\em AIAA Journal}, 22(9):1192–1206, September 1984.

\bibitem{ls}
S.~Leibovich and K.~Stewartson.
\newblock A sufficient condition for the instability of columnar vortices.
\newblock {\em J. Fluid Mech.}, 126:335--356, 1983.

\bibitem{ls_87}
S.~Leibovich and K.~Stewartson.
\newblock On the stability of a columnar vortex to disturbances with large
  azimuthal wavenumber: the lower neutral points.
\newblock {\em J. Fluid Mech.}, 178:549--566, 1987.

\bibitem{Lessen_Singh_Paillet_1974}
M.~Lessen, P.~J. Singh, and F.~Paillet.
\newblock The stability of a trailing line vortex. part 1. inviscid theory.
\newblock {\em Journal of Fluid Mechanics}, 63(4):753–763, 1974.

\bibitem{lin_siam}
Z.~Lin.
\newblock Instability of some ideal plane flows.
\newblock {\em SIAM J. Math. Anal.}, 35(2):318--356, 2003.

\bibitem{linnonlinear}
Z.~Lin.
\newblock Nonlinear instability of ideal plane flows.
\newblock {\em Int. Math. Res. Not.}, 41:2147--2178, 2004.

\bibitem{LinZengCPAM2013}
Z.~Lin and C.~Zeng.
\newblock Unstable manifolds of {E}uler equations.
\newblock {\em Comm. Pure Appl. Math.}, 66(11):1803--1836, 2013.

\bibitem{LinZengCorrigendum2014}
Z.~Lin and C.~Zeng.
\newblock Corrigendum: {U}nstable manifolds of {E}uler equations.
\newblock {\em Comm. Pure Appl. Math.}, 67(7):1215--1217, 2014.

\bibitem{LuccaNegro2001}
O.~Lucca-Negro and T.~O’Doherty.
\newblock Vortex breakdown: a review.
\newblock {\em Progress in Energy and Combustion Science}, 27(4):431–481,
  January 2001.

\bibitem{MAO2011}
X.~Mao and S.~Sherwin.
\newblock Continuous spectra of the {B}atchelor vortex.
\newblock {\em Journal of Fluid Mechanics}, 681:1–23, May 2011.

\bibitem{Mayer_Powell_1992}
E.~W. Mayer and K.~G. Powell.
\newblock Viscous and inviscid instabilities of a trailing vortex.
\newblock {\em Journal of Fluid Mechanics}, 245:91–114, 1992.

\bibitem{OlendraruSellier}
C.~Olendraru and A.~Sellier.
\newblock Viscous effects in the absolute-convective instability of the
  {B}atchelor vortex.
\newblock {\em J. Fluid Mech.}, 459:371--396, 2002.

\bibitem{rayleigh1879stability}
Lord Rayleigh.
\newblock On the stability, or instability, of certain fluid motions.
\newblock {\em Proceedings of the London Mathematical Society}, 1(1):57--72,
  1879.

\bibitem{rayleigh_1916}
Lord Rayleigh.
\newblock On the dynamics of revolving fluids.
\newblock {\em Proceedings of the Royal Society of London, Series A},
  93(648):148--154, 1916.

\bibitem{Stewartson1982}
K.~Stewartson.
\newblock The stability of swirling flows at large {R}eynolds number when
  subjected to disturbances with large azimuthal wavenumber.
\newblock {\em Phys. Fluids}, 25(11):1953--1957, 1982.

\bibitem{StewartsonBrownNearNeutral1985}
K.~Stewartson and S.~N. Brown.
\newblock Near-neutral centre-modes as inviscid perturbations to a trailing
  line vortex.
\newblock {\em J. Fluid Mech.}, 156:387--399, 1985.

\bibitem{synge1933stability}
J.~L. Synge.
\newblock The stability of heterogeneous liquids.
\newblock {\em Trans. R. Soc. Canada}, 27:1, 1933.

\bibitem{temme}
N.~M. Temme.
\newblock {\em Asymptotic methods for integrals}, volume~6 of {\em Series in
  Analysis}.
\newblock World Scientific Publishing Co. Pte. Ltd., Hackensack, NJ, 2015.

\bibitem{van1982album}
M.~Van~Dyke.
\newblock {\em An album of fluid motion}, volume 176.
\newblock Parabolic Press Stanford, 1982.

\bibitem{vishik_1}
M.~Vishik.
\newblock Instability and non-uniqueness in the {C}auchy problem for the
  {E}uler equations of an ideal incompressible fluid. {P}art {I}.
\newblock 2018.
\newblock arXiv:1805.09426.

\bibitem{vishik_2}
M.~Vishik.
\newblock Instability and non-uniqueness in the {C}auchy problem for the
  {E}uler equations of an ideal incompressible fluid. {P}art {II}.
\newblock 2018.
\newblock arXiv:1805.09440.

\bibitem{friedlandervishik}
M.~Vishik and S.~Friedlander.
\newblock Nonlinear instability in two dimensional ideal fluids: {T}he case of
  a dominant eigenvalue.
\newblock {\em Comm. Math. Phys.}, 243(2):261--273, 2003.

\bibitem{yosida}
K.~Yosida.
\newblock {\em Functional analysis}.
\newblock Classics in Mathematics. Springer-Verlag, Berlin, 1995.
\newblock Reprint of the sixth (1980) edition.

\end{thebibliography}

\end{document}